\newcounter{ourcount}
\newcounter{myenumi}
\newcommand{\qwB}{\mathsf{qw}\kern-1.8pt\mathscr{B}}
\newcommand{\qwb}{\mathsf{qw}\kern-1.8pt\mathscr{B}}
\newcommand{\wb}{\mathsf{w}\kern-1.8pt\mathscr{B}}
\newcommand{\tensor}{\mathbin{\otimes}}
\newcommand{\EE}{\mathscr{E}}
\newcommand{\BareWt}{\kern2pt\overline{\kern-2pt\mathsf{Wt}\kern-2pt}\kern2pt}
\newcommand{\bref}[1]{\textbf{\ref{#1}}}
\newcommand{\mfrac}[2]{\raisebox{.8pt}{\mbox{\small$\displaystyle\frac{#1}{#2}$}}}
\newcommand{\ffrac}[2]{\raisebox{.5pt}{\mbox{\footnotesize$\displaystyle\frac{#1}{#2}$}}}
\newcommand{\fffrac}[2]{\raisebox{.9pt}{\mbox{\scriptsize$\displaystyle\frac{#1}{#2}$}}}
\newcommand{\oC}{\mathbb{C}}
\newcommand{\oZ}{\mathbb{Z}}
\newcommand{\qSL}[1]{\mathscr{U}_{\q} s\ell(#1)}
\newcommand{\SL}[1]{s\ell(#1)}
\newcommand{\Fi}{B}
\newcommand{\Fii}{F}
\newcommand{\Ei}{C}
\newcommand{\Eii}{E}
\newcommand{\q}{\textit{\textsf{q}}}
\newcommand{\qGL}[1]{\mathscr{U}_q g\ell(#1)}
\newcommand{\GL}[1]{g\ell(#1)}
\newcommand{\qint}[1]{{\textstyle[#1]}}
\newcommand{\one}{\boldsymbol{1}}
\newcommand{\rep}{\mathscr}
\newcommand{\repX}{\rep{X}}
\newcommand{\repZ}{\rep{Z}}
\newcommand{\repZold}{\rep{Z}}
\newcommand{\repZZ}{\widebar{\mathscr{Z}}}
\newcommand{\repR}{\rep{R}}
\newcommand{\repRR}{\widebar{\mathscr{R}}}
\let\BTL=\leftarrow
\let\BTR=\rightarrow
\let\BTD=\downarrow
\let\BTU=\uparrow
\newcommand{\ket}[1]{|#1\rangle}
\newcommand{\ZC}[1]{\ket{{\textstyle#1}}^{\!\BTL}}
\newcommand{\ZB}[1]{\ket{{\textstyle#1}}^\BTR}
\newcommand{\ZD}[1]{\ket{{\textstyle#1}}^{\!\BTD}}
\newcommand{\ZU}[1]{\ket{{\textstyle#1}}^\BTU}
\newcommand{\myboldsymbol}{\pmb}
\newcommand{\bExti}{\boldsymbol{\textup{Ext}}^{\myboldsymbol{1}}}
\newcommand{\Exti}{\mathop{\mathrm{Ext}^1}}
\newcommand{\myatop}[2]{{#1\atop\mbox{\small$(#2)$}}}
\newcommand{\TOP}{{}^{{\scriptscriptstyle\bigtriangleup}}}
\newcommand{\BOT}{{}_{{\scriptscriptstyle\bigtriangledown}}}
\newcommand{\three}{\myboldsymbol{3}}
\newcommand{\bthree}{\myboldsymbol{\overline{3}}}
\newcommand{\Chain}{\mathscr{T}}
\newcommand{\Chat}{T^{\text{at}}}
\newcommand{\la}{\lambda}
\newcommand{\emp}{\emptyset}
\newcommand{\res}{\text{res}}
\newcommand{\resN}{\res^{m,n}_{m,n-1}}
\newcommand{\add}{\text{add}}
\newcommand{\remov}{\text{rem}}
\newcommand{\Cr}{\mathscr{C}\text{r}}
\newcommand{\red}[1]{\textcolor{red}{#1}}
\newcommand{\myar}{\ar@{-->}@[|(1.0)]}
\newcommand{\Da}[2]{A^{#1}_{#2}}
\newcommand{\Db}[2]{B^{#1}_{#2}}
\newcommand{\Dc}[2]{C^{#1}_{#2}}
\newcommand{\Daa}[2]{\hat{A}^{#1}_{#2}}
\newcommand{\Dbb}[2]{\hat{B}^{#1}_{#2}}
\newcommand{\Dcc}[2]{\hat{C}^{#1}_{#2}}
\newcommand{\at}{\mathscr{A}\text{t}}
\newcommand{\bup}{\biguplus}
\newcommand{\operS}{\hat{G}}
\newcommand{\Alg}{\mathscr{X}}
\newcommand{\ppp}{\mathbb{P}}
\newcommand{\qqq}{\mathbb{Q}}
\numberwithin{equation}{section}
\def\@secnumfont{\bfseries}
\def\subsubsection{\@startsection{subsubsection}{3}%
  \z@{.5\linespacing\@plus.7\linespacing}{-.5em}%
  {\normalfont\bfseries}}
\def\paragraph{\@startsection{paragraph}{4}%
  \z@\z@{-\fontdimen2\font}%
  \normalfont\bfseries}
\def\subparagraph{\@startsection{subparagraph}{5}%
  \z@\z@{-\fontdimen2\font}%
  \normalfont\bfseries}
\newtheorem{Thm}[subsection]{Theorem}
\newtheorem{thm}[subsubsection]{Theorem}
\newtheorem{lemma}[subsubsection]{Lemma}
\newtheorem{prop}[subsubsection]{Proposition}
\newtheorem{cor}[subsubsection]{Corollary}
\newtheorem{conj}[subsubsection]{Conjecture}
\newtheorem{Conj1}[subsection]{Conjecture 1}
\newtheorem{Conj2}[subsection]{Conjecture 2}
\theoremstyle{definition}
\newtheorem{dfn}[subsubsection]{Definition}
\newtheorem{rem}[subsubsection]{Remark}
\begin{document}
\begin{flushright}
\vspace{1mm}
FIAN/TD/2017-9\\
\end{flushright}

\title[Bimodule structure of the mixed tensor product]{Bimodule structure of the mixed tensor product over $\qSL{2|1}$ and quantum walled Brauer algebra}

\author{D.\,V.\;Bulgakova, A.\,M.\;Kiselev and I.\,Yu.\;Tipunin}
\address{I.E.Tamm Department of Theoretical Physics, Lebedev Physical Institute,
Leninsky prospect 53, 119991, Moscow, Russia\hfill\mbox{}\linebreak \texttt{dvbulgakova@gmail.com}, \texttt{kiselevalexs@gmail.com}, \texttt{tipunin@gmail.com}}

\begin{abstract}
We study a mixed tensor product 
$\three^{\tensor m} \tensor \bthree^{\tensor n}$
of the three-dimensional fundamental representations of the Hopf algebra
$\qSL{2|1}$, whenever $\q$ is not a root of unity.
Formulas for the decomposition of tensor products of any simple and projective $\qSL{2|1}$-module with the generating modules $\three$ and $\bthree$ are obtained. 
The centralizer of $\qSL{2|1}$ on the mixed tensor product is calculated. It is shown to be the quotient 
$\Alg_{m,n}$ of the quantum walled Brauer algebra $\qwb_{m,n}$.
The structure of projective modules over $\Alg_{m,n}$ is written down explicitly.
It is known that the walled Brauer algebras form an infinite tower. We have calculated the corresponding restriction functors on simple and projective modules over $\Alg_{m,n}$. This result forms a crucial step in decomposition of the mixed tensor product as a bimodule over
$\Alg_{m,n}\boxtimes\qSL{2|1}$. We give an explicit bimodule structure for all $m,n$.\\[1cm]
\textit{Keywords:} Quantum group; Walled Brauer algebra; Mixed tensor product; sl(2|1)-spin chain; Schur-Weyl duality; Bimodule 
\end{abstract}

\maketitle
\thispagestyle{empty}

\section{Introduction}
Over the course of the last twenty years Logarithmic conformal field theory (LCFT) has established itself as 
an area of extensive interaction between models of statistical physics such as
percolation, the sand pile model, dense polymers as well as other models with nonlocal
observables on the one hand, and modern topics in mathematics such as Nichols
algebras, quantum groups, braided categories, VOA theory 
and diagram algebras on the other.
One of the most developed approaches~\cite{FGST2006_1, FGST2006_3, FuchsHwangST} to
constructing LCFT is based on the intersection of screening operator kernels. 
In this approach one chooses a lattice vertex operator algebra (VOA) and fixes
a set of fields $v_i$, which correspond to representations of VOA and are called
screening currents. The zero modes of these currents $s_i = \oint v_i$ are called
screenings. Under certain integer valuedness conditions on scalar products of the 
screening currents momenta, the screenings form a finite-dimensional Nichols
algebra (see examples in \cite{SemTip2011, Simon2017}). 
Under these conditions the intersection of the
screening kernels is a vacuum module of a rational LCFT:
$\text{Vac} = \bigcap_i \text{Ker} \, s_i$. 
In this case, LCFT is a representation space of the rational $W$-algebra $\mathscr{W}=\text{Vac}$.

The algebra $\mathscr{W}$ has only a finite number of irreducible
representations. The set of simple and projective $\mathscr{W}$-modules is closed
under fusion and the characters of  the $\mathscr{W}$-irreducible modules
generate a finite-dimensional representation of the modular group.

Another source of LCFT is given by various lattice models 
\cite{PRZ, ReadSaleur2007, ReadSaleur2007_2, Ridout2007, Ridout2015, ga, AzatVasseur2013}. CFT appears
naturally as a scaling limit of lattice models in the critical point, see e.g.
\cite{DiFrancesco}. Then, a mathematically rigorous program on algebraic construction of the scaling limits was initiated in \cite{AzatReadSaleur2016}.
If one considers nonlocal observables (for example, the
cluster probability in percolation theory
\cite{DuplantierSaleur1987, FrancSaleurZuber1987, Cardy1991, Ruelle2013}) in
the lattice 
model, then in the scaling limit an LCFT is in general expected to appear, 
and in several models \cite{Rasmussen2007_1, Rasmussen2007_2, ga,
  AzatSaleur2016} its appearance is shown explicitly. 

The standard approach to studying the lattice models is the transfer-matrix
method 
\cite{Baxter, Quantum_Groups_2dim}. In this approach a connection with a spin
chain is established by the Hamiltonian limit. Another feature of lattice
models with nonlocal observables is that in the Hamiltonian limit there exist
nontrivial Jordan blocks in the Hamiltonian  
\cite{Dubail2010, VasseurJacobsen2011, MorinDuchesne2011, PearceRasmussen2013,
  BPPR_2014} 
(see discussion on the Jordan blocks problem in the algebraic Bethe ansatz
approach in \cite{AzatNepomechie2016}). 
From the side of LCFT the existence of
nontrivial Jordan blocks in the Hamiltonian is expressed in the fact that
the conformal dimension operator $L_0$ becomes non-diagonalizable 
and conformal blocks admit logarithmic terms.

In both approaches a quantum group plays a crucial role 
\cite{FGST2006_2, FGST2007, ReadSaleur2007_2, AzatVasseur2013}. In the first case quantum group appears as a double bosonization of the
algebra generated by screenings \cite{SemTip2013CFT}. In the second case the
spin-chain can be constructed as tensor product of fundamental representations
of 
the quantum group.

For the simplest case of $(1,p)$ LCFT models 
\cite{GaberdielKausch,  FHST2004, AdamovicMilas2007_1,AdamovicMilas2007_2},
the corresponding spin-chain $T_N$ is a tensor product of two-dimensional
representations of the quantum group
$\qSL{2}$, \cite{AzatSalTip2012}, and the $T_N$ is called the Heisenberg
spin-chain. 

An interesting generalization of the Heisenberg spin-chain is a spin-chain based on the algebra 
$\qSL{M|N}$ \cite{Candu2010}. Such spin-chains describe interaction between spin and other degrees of freedom. For
instance, $\qSL{2|1}$-spin-chain is related with (integrable) t-J model
which includes the interaction between spin and charge degrees of  freedom,
\cite{Links,Abad, FeiYue1994}.

On the side of LCFT, models related with the quantum group $\qSL{2|1}$ are
constructed in \cite{SemTip2013CFT} by the approach based on intersection of
kernels of the screening operators. Rational  $W$-algebras
$\mathscr{W}$ containing as a subalgebra $\hat{sl}(2)_k$ at a rational level $k$
naturally occur in these models. At the same time models over $\hat{sl}(2)_k$
are not rational. In this case $\hat{sl}(2)_k$ is an analog of the Virasoro
algebra in $(1,p)$-models. More on LCFT with $\hat{sl}(2)_k$ see in
\cite{Ridout2012, Ridout2013, RidoutWood_2015}.

In order to investigate how LCFT with quantum group $\qSL{2|1}$ appears in the scaling
limit of the spin-chain,
it is natural to follow the approach proposed in \cite{AzatSalTip2012}. In the
present paper we study $\qSL{2|1}$ mixed tensor product which is
the space of 
states for the spin-chains with $\qSL{2|1}$ symmetry. But the case when $\q$
is a 
root of
unity $\q = e^{i\pi/p}$, is more complicated and we leave it for a separate work.
Therefore in the present paper we
consider only the algebra with a generic value of the parameter~$\q$.

It is useful to make an analogy with the Heisenberg $\qSL{2}$-spin-chain with
generic $\q$.
Its centralizer $\mathscr{C}(\qSL{2})$ on the chain is the Temperley-Lieb algebra
$\mathscr{C}(\qSL{2})=TL_N$ with the same value of the parameter $\q$. Thus, the
spin-chain space of states can be expressed as a bimodule $T_N = \bigoplus_i
V_i \boxtimes M_i$, where $V_i$ and $M_i$ are some simple $\qSL{2}$- and
$TL_N$-modules. When $N \to \infty$, the algebra $TL_N$ conjecturally converges to the Virasoro algebra.
When $\q$ is a root of unity the centralizer of the Temperley--Lieb algebra
$TL_N$ is the Lusztig limit $\mathscr{L}\qSL{2}$ of $\qSL{2}$.
In this case the bimodule decomposition of the spin chain contains non
semisimple 
summands \cite{AzatVasseur2013}.

When $\q$ is a root of unity, the algebra $\mathscr{L}\qSL{2}$ contains the restricted quantum group $\overline{\mathscr{U}}_{\q} s\ell(2)$ as a subalgebra, see details in \cite{BGT}.
In \cite{AzatSalTip2012} it is shown that the centralizer of $\overline{\mathscr{U}}_{\q} s\ell(2)$ on the spin-chain $T_N$ is the algebra $\mathscr{W}_N$, which contains the algebra
$TL_N$. In the limit $N \to \infty$ the algebra $\mathscr{W}_N$ gives the triplet algebra
$\mathscr{W}$ built by the lattice VOA construction.

In case of $\qSL{2|1}$ we take its (mutually dual) fundamental representations
which are three-dimensional and denote them by $\three$ and $\bthree$.
We study the mixed tensor product
\begin{equation}\label{chain}
\Chain_{m,n} = \three^{\tensor m} \tensor \bthree^{\tensor n}.
\end{equation}
The tensor product $\Chain_{m,n}$ is the space of states of different integrable spin-chains with $\qSL{2|1}$ symmetric hamiltonians, examples of which are considered in 
\cite{Saleur2005, ReadSaleur2007, Frahm2011, Frahm2012}.
We let $\Alg_{m,n}$ denote the centralizer of $\qSL{2|1}$ on $\Chain_{m,n}$,
$\mathscr{C}(\qSL{2|1})=\Alg_{m,n}$. It is shown in \cite{ShaderMoon02, DDS-1, DDS-2, Stroppel12} that $\Alg_{m,n}$ is isomorphic to some quotient of the quantum walled Brauer algebra $\qwb_{m,n}$.
In this paper we do not give an explicit description of $\Alg_{m,n}$ itself, but describe simple and projective modules over $\Alg_{m,n}$.
We find the decomposition of the chain $\Chain_{m,n}$ as a bimodule over
$\qSL{2|1}$ and $\Alg_{m,n}$.
Even for generic values of $\q$, the bimodule is not semisimple. We give the
bimodule in an explicit form in Theorem \bref{bimodulethm}.

The quantum walled Brauer algebra $\qwb_{m,n}$ was introduced in \cite{Leduc,Halv, KM1993}.
The two-parame\-tric algebra $\qwb_{m,n}$ was introduced in \cite{En} and the structure of
the simple modules was described implicitly. Modules over $\qwb_{m,n}$ and its
classical analogue $\mathsf{w}\kern-1.8pt\mathscr{B}_{m,n}$ were investigated in
\cite{CDM2007, Cox2010, RuiSong_1, RuiSong_2,  ST2015}.

For arbitrary values $M, N$ the algebra $\qSL{M|N}$ on the appropriate mixed
tensor product
(which is the tensor product of its fundamental representations) is centralized by
some quotient of $\qwb_{m,n}$, see also \cite{Stroppel12, Stroppel2014}.
If $N=0$ the bimodule is semisimple. 
We study the simplest non-semisimple case  $N=1$.

The outline of the article is as follows. In Sec.~\ref{sec:sl(2|1)} we define
the algebra $\qSL{2|1}$ and classify its finite-dimensional simple and projective
modules. In Sec.~\ref{sec:chain} we
describe the mixed tensor product and introduce the centralizer $\Alg_{m,n}$. First, we
prove the formulas for the tensor products of modules needed to the mixed
tensor product
decomposition. Next, we show that the centralizer is a quotient of the algebra
$\qwB_{m,n}$. In Sec.~\ref{sec:modules} we describe simple and projective
modules over $\Alg_{m,n}$ and the restriction functors on them. 
In the last Sec.~\ref{sec:bimodule} we describe the
bimodule structure and give a sketch of a proof for the bimodule
decomposition formula.

\section{The Hopf algebra $\qSL{2|1}$}\label{sec:sl(2|1)}
\subsection{Definition of $\qSL{2|1}$}
Quantum analogues of superalgebras $\SL{2|1}$ and $\GL{2|1}$ was studied intensively in \cite{pal1,pal2,zhang,ky}.
We describe the Hopf algebra $\qSL{2|1}$ by a system of generators and
relations.  
In this section and in the entire paper we assume that the parameter $\q$ is not a root of unity.
We choose the generators adapted to the Hopf subalgebra
structure $\qSL{2|1}\supset\qGL{2}\supset\qSL{2}$ (such that the
embeddings become tautological); we extensively use these subalgebras
while working with $\qSL{2|1}$ modules in the sequel.  The Hopf
subalgebra $\qSL{2}$ in $\qSL{2|1}$ is generated as an associative
algebra by $\Eii$, $K$, and $\Fii$ with the relations
\begin{equation}
  \begin{gathered}
    K\Fii=\q^{-2}\Fii K,\quad \Eii \Fii-\Fii \Eii
    =\mfrac{K-K^{-1}}{\q-\q^{-1}},\quad K\Eii=\q^{2}\Eii K.
  \end{gathered}
\end{equation}
The larger algebra $\qGL{2}$ contains an additional generator $k$
satisfying the relations
\begin{gather}
  k\Fii= \q \Fii k,
  \quad
  k\Eii = \q^{-1}\Eii k,
  \quad
   kK = Kk.
\end{gather}
We call the generators $\Eii$, $\Fii$, $K$ and $k$ bosonic.  There are
two additional generators $B$ and $C$, which extend $\qGL{2}$ to
$\qSL{2|1}$, and which we call fermionic, or simply fermions.  The
relations that involve the fermions $B$ and $C$ are
\begin{equation}
  \begin{gathered}
  k\Fi =-\Fi k,\quad K\Fi =
  \q \Fi K, \quad k\Ei =-\Ei k,\quad
  K\Ei = 
  \q^{-1}\Ei K,
  \\[-2pt]
  \Fi^2 = 0, \quad \Fi \Ei -\Ei \Fi =\mfrac{k-k^{-1}}{\q-\q^{-1}},
  \quad \Ei^2 = 0,
  \\
  \Fii \Ei -\Ei \Fii=0, \qquad \Fi \Eii -\Eii \Fi =0,
  \\
  \Fii\Fii\Fi - 
  \qint{2}\Fii\Fi\Fii + \Fi\Fii\Fii = 0, \quad
  \Eii\Eii\Ei - 
  \qint{2} \Eii\Ei\Eii + \Ei\Eii\Eii = 0,
\end{gathered}
\end{equation}
where we use $q$-integers defined as
\begin{equation*}
  \qint{n}=\ffrac{q^{n}-q^{-n}}{q-q^{-1}}.
\end{equation*}
The Hopf-algebra structure of $\qSL{2|1}$ (the coproduct, the
antipode, and the counit) is given by
\begin{align}
  &\begin{alignedat}{2}
    \Delta(\Fii)&=\Fii\otimes1 + K^{-1}\otimes \Fii,\qquad
    &\Delta(\Eii)&=\Eii\tensor K+\one\tensor\Eii,
    \\
    \Delta(\Fi)&= \Fi\otimes1 + k^{-1}\otimes \Fi,
    &\Delta(\Ei)&=\Ei\tensor k+\one\tensor\Ei,
  \end{alignedat}
  \\
  &\begin{alignedat}{2}
    S(\Fi)&=-k\Fi,\quad S(\Fii)=-K\Fii,    
    \quad&S(\Ei)&=-\Ei k^{-1}, \quad S(\Eii)=-\Eii K^{-1},
  \end{alignedat}
\\
  &\begin{alignedat}{2}
    \epsilon(\Fi)&=0,\quad\epsilon(\Fii)=0,
    &\quad
    \epsilon(\Ei)&=0,\quad \epsilon(\Eii)=0,
  \end{alignedat}
\end{align}
with $k$ and $K$ being group-like.

\subsection{Simple $\qSL{2|1}$ modules}
We consider a subcategory of $\qSL{2|1}$-modules with $k$ eigenvalues
of the form $\q^{-n}$ for $n\in\oZ$.  The subcategory is closed under
tensor products.  The simple finite-dimensional $\qSL{2|1}$-modules
can be labeled as
\begin{gather*}
  \repZ^{\alpha,\beta}_{s,r},\qquad \alpha,\beta=\pm 1,\quad
  s\geq 1,\quad r\in \mathbb{Z}.
\end{gather*}
They have dimensions
\begin{equation}
  \dim\repZ^{\alpha,\beta}_{s,r}=
  \begin{cases}
    2s-1,& r=0,\\
    2s+1,& r=s,\\
    4s,& r\neq0,s.
  \end{cases}
\end{equation}
The modules with $r=0$ and $r=s$ are atypical, and others are typical. In \cite{pal1} it was shown that every finite-dimensional irreducible module over the general linear Lie superalgebra $\GL{n|1}$ can be deformed into an irreducible module over $\qGL{n|1}$. Notations "typical" and "atypical" for modules in the present work are inherited from the theory of Lie superalgebras (see, for example \cite{su}).

\subsubsection{$\qSL{2|1}$-action on simple modules}\label{basis_in_sl21_modules}
We describe (following \cite{TR-reps}) the action of $\qSL{2|1}$ on its simple modules
explicitly, using the basis adapted to the decomposition into
$\qGL{2}$-modules.  Each $\qSL{2|1}$-module decomposes into a direct
sum of simple $\qGL{2}$-modules $\repX^{\alpha,\beta}_{s,r}$, where
$\alpha ,\,\beta=\pm$, $s\geq 1,\, r\in \mathbb{Z}$.
Their dimensions are 
$\dim \repX^{\alpha,\beta}_{s,r} = s$. 
Eigenvalues of generators $K$ and $k$ on the highest weight vector in the module 
$\repX^{\alpha,\beta}_{s,r}$ are 
$\alpha \q^{s-1}$ and $\beta q^{-r}$ correspondingly.  

\begin{description}\addtolength{\itemsep}{6pt}
\item[Atypical modules with $r=0$, \ $\repZold^{\alpha,\beta}_{s,0}$] As
  $\qGL{2}$-modules, these modules decompose as
\begin{equation}\label{aty}
 \repZold^{\alpha,\beta}_{s,0}=
 \repX^{\alpha,\beta}_{s,0}\oplus\repX^{\alpha,\beta}_{s-1,-1},
\end{equation}
and we choose a basis in $\repZold^{\alpha,\beta}_{s,0}$ in accordance
with this decomposition, as
\begin{equation*}
  \left(
    \ZC{\alpha,s;\beta,0}_n\in\repX^{\alpha,\beta}_{s,0}\right)_{
    0\leq n\leq s-1},\qquad
  \left(
    \ZB{\alpha,s;\beta,0}_m\in\repX^{\alpha,\beta}_{s-1,-1}\right)_{
    0\leq m\leq s-2}.
\end{equation*}
The fermionic generators relate these two types of vectors as
\begin{alignat*}{2}
    B \ZC{\alpha,s,\beta,0}_n&=-\qint{n}\ZB{\alpha,s,\beta,0}_{n-1},
    \qquad
    &C\ZB{\alpha,s,\beta,0}_m&=\beta\ZC{\alpha,s,\beta,0}_{m+1}.
  \end{alignat*}

\item[Atypical modules with $s=r$, \ $\repZold^{\alpha,\beta}_{s,s}$]
  The modules decompose as
\begin{equation}\label{atyp}
  \repZold^{\alpha,\beta}_{s,s}=\repX^{\alpha,\beta}_{s,s}\oplus\repX^{\alpha,-\beta}_{s+1,s},
\end{equation}
and we choose a basis in $\repZold^{\alpha,\beta}_{s,s}$ accordingly,
as
\begin{equation*}
  \left(
    \ZC{\alpha,s;\beta,s}_n\in\repX^{\alpha,\beta}_{s,s}\right)_{
    0\leq n\leq s-1},\qquad
  \left(
    \ZB{\alpha,s;\beta,s}_m\in\repX^{\alpha,-\beta}_{s+1,s}\right)_{
    0\leq m\leq s}.
\end{equation*}
The fermions act between these two sets of basis vectors as
\begin{alignat*}{2}
    B \ZC{\alpha,s,\beta,s}_n&=\qint{s-n}\ZB{\alpha,s,\beta,s}_{n},
    \qquad
    &C \ZB{\alpha,s,\beta,s}_m&=\beta\ZC{\alpha,s,\beta,s}_{m}.
  \end{alignat*}

\item[Typical modules ($r\neq0,s$)] The modules decompose as
\begin{equation}\label{ty}
  \repZold^{\alpha,\beta}_{s,r}=\repX^{\alpha,\beta}_{s,r}
  \oplus\repX^{\alpha,-\beta}_{s+1,r}
  \oplus\repX^{\alpha,-\beta}_{s-1,r-1}
  \oplus\repX^{\alpha,\beta}_{s,r-1},
\end{equation}
and we choose the basis in $\repZold^{\alpha,\beta}_{s,r}$ as
\begin{multline*}
  \bigl(\ZC{\alpha,s;\beta,r}_j\bigr)_{0\leq j\leq s-1},
  \ \bigl(\ZU{\alpha,s;\beta,r}_m\bigr)_{0\leq m\leq s},
    \\
  \bigl(\ZD{\alpha,s;\beta,r}_n \bigr)_{0\leq n\leq s-2}, \
  \bigl(\ZB{\alpha,s;\beta,r}_j\bigr)_{0\leq j\leq s-1}.
\end{multline*}
The fermions act on these vectors as
\begin{alignat*}{2}
    B \ZC{\alpha, s, \beta, r}_{j} &= \fffrac{\qint{j}}{\qint{s}}
    \ZD{\alpha, s, \beta, r}_{j - 1} + \beta \fffrac{\qint{r}\qint{s -
        j}}{\qint{s}} \ZU{\alpha, s, \beta, r}_{j},\kern-200pt
    \\
    B \ZU{\alpha, s, \beta, r}_{m} &= \qint{m} \ZB{\alpha, s, \beta,
      r}_{m - 1}, & C \ZU{\alpha, s, \beta, r}_{m} &= \ZC{\alpha, s,
      \beta, r}_{m},
    \\
    B \ZD{\alpha, s, \beta, r}_{n} &= \beta \qint{r}
    \qint{n\!+\!1\!-\!s} \ZB{\alpha, s, \beta, r}_{n}, \quad& C
    \ZD{\alpha, s, \beta, r}_{n} &= \beta \qint{r\!-\!s} \ZC{\alpha,
      s, \beta, r}_{n + 1},
    \\
    C \ZB{\alpha, s, \beta, r}_{j} &= \fffrac{1}{\qint{s}} \ZD{\alpha,
      s, \beta, r}_{j} + \beta \fffrac{\qint{s - r}}{\qint{s}}
    \ZU{\alpha, s, \beta, r}_{j + 1}.\kern-100pt
  \end{alignat*}
\end{description}

\subsection{$\bExti$ spaces for atypical modules}\label{sec:ext}
For two modules $\repZold_1$ and $\repZold_2$, we define
$\Exti(\repZold_2,\repZold_1)$ as a linear space with basis
identified with nontrivial short exact sequences
\begin{equation*}
  0\to\repZold_1\to\repZold_1\oright\repZold_2\to\repZold_2\to0.
\end{equation*}
modulo a certain equivalence relation \cite{Maclane}.

The groups $\Exti$ vanish for the typical $\qSL{2|1}$ modules.  For
the atypical modules, the $\Exti(\repZold_1,\repZold_2)$ group is at
most $1$-dimensional.  Whenever $\Exti(\repZold_1,\repZold_2)$ is
nontrivial, we describe the algebra action in terms of generators: the
action of a $\qSL{2|1}$-generator $A$ on $\repZold_1\oright\repZold_2$
is given by
\begin{equation*}
  \rho_{A}=\rho^{(0)}_{A}+\xi_A,
\end{equation*}
where $\rho^{(0)}_{A}$ is the direct sum of actions of $\qSL{2|1}$-generators on the simple modules and
$\xi_A=\xi^{\repZold_1,\repZold_2}_A:\repZold_1\to\repZold_2$ are
linear maps.

We list the $\xi_A$ maps in terms of the bases introduced above.  The
formulas can be somewhat uniformized by adopting the following
convention for the $1$-dimensional modules
$\repZold^{\alpha,-\beta}_{1,0}$: we denote this module also by $\repZold^{\alpha,\beta}_{0,0}$, with a basis vector
$\ZB{\alpha,0,\beta,0}_0=\ZC{\alpha, 1,-\beta, 0}_{0}$ (and, formally,
with $\ZC{\alpha,0,\beta,0}_m=0$, $m\neq0$).  We then have
\begin{align*}
  \myatop{
    \Exti(\repZold^{\alpha,\beta}_{s,0},\repZold^{\alpha,-\beta}_{s+1,0})=\{b_{s+1}\},}{s\geq 1}\quad&
  \begin{aligned}
    \xi_{\Fi}&:\ZC{\alpha,s,\beta,0}_m\mapsto-\qint{s - m}\ZC{\alpha, s+1 ,-\beta, 0}_m,\\
    \xi_{\Fi}&:\ZB{\alpha,s,\beta,0}_m\mapsto \qint{s - m-1}\ZB{\alpha, s+1, -\beta, 0}_m,
  \end{aligned}
  \\[4pt]
  \myatop{
    \Exti(\repZold^{\alpha,\beta}_{s,0},\repZold^{\alpha,-\beta}_{s-1,0})=\{c_{s-1}\},}{s\geq 2}\quad&
  \begin{aligned}
    \xi_{\Ei}&:\ZC{\alpha,s,\beta,0}_m\mapsto \ZC{\alpha, s-1, -\beta, 0}_m,\\
    \xi_{\Ei}&:\ZB{\alpha,s,\beta,0}_m\mapsto \ZB{\alpha, s-1, -\beta, 0}_m,
  \end{aligned}
\\[4pt]
  \myatop{
    \Exti(\repZold^{\alpha,\beta}_{s,s},\repZold^{\alpha,-\beta}_{s-1,s-1})=\{\bar{b}_{s-1}\},}{s\geq 1}\quad&
  \begin{aligned}
    \xi_{\Fi}&:\ZC{\alpha,s,\beta,s}_m\mapsto-\qint{m}\ZC{\alpha, s-1 ,-\beta, s-1}_{m-1},\\
    \xi_{\Fi}&:\ZB{\alpha,s,\beta,s}_m\mapsto\qint{m}\ZB{\alpha, s-1, -\beta, s-1}_{m-1},
  \end{aligned}
  \\[4pt]
  \myatop{
    \Exti(\repZold^{\alpha,\beta}_{s,s},\repZold^{\alpha,-\beta}_{s+1,s+1})=\{\bar{c}_{s+1}\},}{s\geq 0}\quad&
  \begin{aligned}
    \xi_{\Ei}&:\ZC{\alpha,s,\beta,s}_m\mapsto \ZC{\alpha, s+1, -\beta, s+1}_{m+1},\\
    \xi_{\Ei}&:\ZB{\alpha,s,\beta,s}_m\mapsto \ZB{\alpha, s+1, -\beta, s+1}_{m+1}.
  \end{aligned}
\end{align*}

\subsection{Projective $\qSL{2|1}$-modules}\label{projective}
There are two types of projective $\qSL{2|1}$-modules.

\subsubsection{Simple projective modules}\label{simple-projective}
All simple typical modules described in \bref{basis_in_sl21_modules}
are projective.

\subsubsection{Projective covers of atypical modules}
\label{sec:R} 
We use the notation $\repR^{\alpha,\beta}_{s,0}$ and
$\repR^{\alpha,\beta}_{s,s}$ for projective covers of
$\repZold^{\alpha,\beta}_{s,0}$ and $\repZold^{\alpha,\beta}_{s,s}$
(where, as before, $\alpha,\beta=\pm 1 $ and $s\geq 1$). We
describe the projective covers in terms of Loewy graphs.  
The reconstruction of the $\qSL{2|1}$-action on a projective module from
its Loewy graph is described in detail in \cite[Sec.~6]{TR-reps}.  The
action $\rho_A(\nu)$ of a generator $A$ on a vector $\nu$ has three
parts:
\begin{equation*}
  \rho_A(\nu)=\rho_A^{(0)}(\nu)+ c(\nu) \xi_A(\nu)+\eta_A(\nu),
\end{equation*}
where $\rho_A^{(0)}(\nu)$ is the action of $A$ in the irreducible
subquotient, $\xi_A$ is determined in \bref{sec:ext},
and for the map $\eta_A$ we give explicit formulas after each Loewy graph
(whenever $\eta_A$ is nonzero). Here $c(\nu)$ are some coefficients depending on a pair of simple subquotients in the projective module in question. We write them on edges in Loewy graphs 
(see \cite{TR-reps} for a detailed explanation).

It is convenient to distinguish between two series and two exceptional
cases of projective covers.  The first series is
$\repR^{\alpha,\beta}_{s,0}$, $s\geq2$, with the Loewy graph
\begin{equation}\label{proj1}  
\xymatrix@=30pt{
    && {\repZ^{\alpha,\beta}_{s,0}}
    \ar[dl]_{-\qint{s-1}}
    \ar[dr]^{-\qint{s}}
    &\\
    &  {\repZ^{\alpha,-\beta}_{s+1,0}}
    \ar[dr]_{1}
    &
    & {\repZ^{\alpha,-\beta}_{s-1,0}}
    \ar[dl]^{1}
    \\
    && {\repZ^{\alpha,\beta}_{s,0}}
    &
  }
\end{equation}
where
\begin{equation*}
\eta_B:\ZC{\alpha,s,\beta,0}_n\TOP \mapsto
-\beta\qint{n}\ZB{\alpha, s ,\beta, 0}_{n-1}\BOT.
\end{equation*}
Here $v\TOP$ denotes the vector $v$ from the top subquotient, and $v\BOT$ denotes vector $v$ from the bottom subquotient.

The second series is $\repR^{\alpha,\beta}_{s,s}$, $s\geq 2$, with the
Loewy graph
\begin{equation} \label{proj2} 
\xymatrix@=30pt{
    && {\repZ^{\alpha,\beta}_{s,s}}
    \ar[dl]_{-\qint{s}}
    \ar[dr]^{-\qint{s+1}}
    &\\
    &  {\repZ^{\alpha,-\beta}_{s+1,s+1}}
    \ar[dr]_{1}
    &
    & {\repZ^{\alpha,-\beta}_{s-1,s-1}}
    \ar[dl]^{1}
    \\
    && {\repZ^{\alpha,\beta}_{s,s}}
    &
  }
\end{equation}

and with
\begin{equation*}
\eta_C:\ZB{\alpha,s,\beta,s}_n\TOP
\mapsto\ZC{\alpha, s ,\beta, s}_{n}\BOT.
\end{equation*}

The two exceptional cases are $\repR^{\alpha,\beta}_{1,0}$ and
$\repR^{\alpha,\beta}_{1,1}$, with the respective Loewy graphs
\begin{equation}\label{proj3}
  \mbox{}\kern-30pt
  \xymatrix@=30pt{
    && {\repZ^{\alpha,\beta}_{1,0}}
    \ar[dl]_{1}
    \ar[dr]^{-1}
    &\\
    &  {\repZ^{\alpha,-\beta}_{2,0}}
    \ar[dr]_{1}
    &
    & {\repZ^{\alpha,\beta}_{1,1}}
    \ar[dl]^{1}
    \\
    && {\repZ^{\alpha,\beta}_{1,0}}
    &
  }
\xymatrix@=30pt{
    && {\repZ^{\alpha,\beta}_{1,1}}
    \ar[dl]_{-\qint{1}}
    \ar[dr]^{-\qint{2}}
    &\\
    &  {\repZ^{\alpha,-\beta}_{2,2}}
    \ar[dr]_{1}
    &
    & {\repZ^{\alpha,\beta}_{1,0}}
    \ar[dl]^{1}
    \\
    && {\repZ^{\alpha,\beta}_{1,1}}
    &
  }
\end{equation}
These modules have dimensions
\begin{align*}
  \dim\repR^{\alpha,\beta}_{s,0} &= 8s-4, \quad s>1, \\
  \dim\repR^{\alpha,\beta}_{s,s} &= 8s+4, \quad s \geq 1, \\
  \dim\repR^{\alpha,\beta}_{1,0} &= 8.
\end{align*}

\section{The mixed tensor product}\label{sec:chain}
We study the mixed tensor product (``spin-chain'')~\eqref{chain},
where $\;\three=\repZ^{1,-1}_{1,1}$ and
$\;\bthree=\repZ^{1,1}_{2,0}$ are the two
three-dimensional simple $\qSL{2|1}$-modules.  We are interested in decomposing
$\Chain_{m,n}$ as a bimodule over $\qSL{2|1}$ and its
centralizer $\Alg_{m,n}$.  As a necessary first step, we
decompose tensor products of relevant $\qSL{2|1}$-modules with the
fundamental modules $\repZ^{\alpha,\beta}_{1,1}$ and
$\repZ^{\alpha,\beta}_{2,0}$.

\begin{Thm}\label{tp-decomp}
  Tensor products $\repZold\tensor\repZold^{\alpha,\beta}_{1,1}$,
  where $\repZold$ ranges the atypical and typical simple modules and
  their projective covers, decompose as follows: 
  \begin{align*}
    &\repZ^{\alpha_1,\beta_1}_{s,0}\tensor\repZold^{\alpha_2,\beta_2}_{1,1}
    =\repZ^{\alpha_{12},-\beta_{12}}_{s-1,0}+
      \repZ^{\alpha_{12},\beta_{12}}_{s,1}, \quad s \geq 2,
      \\
    &\repZ^{\alpha_1,\beta_1}_{s,s} \tensor\repZold^{\alpha_2,\beta_2}_{1,1}
    =  \repZold^{\alpha_{12},-\beta_{12}}_{s+1,s+1}+
      \repZold^{\alpha_{12},\beta_{12}}_{s,s+1}, \quad s \geq 1, 
    \\
&\left. \begin{array}{l}
    \repZold^{\alpha_1,\beta_1}_{s,r} \tensor\repZold^{\alpha_2,\beta_2}_{1,1}
    =
      \begin{cases} 
        \repR^{\alpha_{12},-\beta_{12}}_{s+1,0}+ \repZold^{\alpha_{12},-\beta_{12}}_{s-1,-1}, & r=-1, \\
        \repR^{\alpha_{12},-\beta_{12}}_{s-1,s-1}+ \repZold^{\alpha_{12},-\beta_{12}}_{s+1,s}, & r=s-1, \\
        \repZold^{\alpha_{12},\beta_{12}}_{s,r+1}+\repZold^{\alpha_{12},-\beta_{12}}_{s+1,r+1}+\repZold^{\alpha_{12},-\beta_{12}}_{s-1,r}
        &
        \text{otherwise},
      \end{cases}
\end{array} \right\}
	\qquad
	s \geq 2
    \\
    \intertext{and}
    &\repR^{\alpha_{1},\beta_{1}}_{s,0} \tensor \repZold^{\alpha_2,\beta_2}_{1,1}
    = \repR^{\alpha_{12},-\beta_{12}}_{s-1,0} +
      2\repZold^{\alpha_{12},\beta_{12}}_{s,1} +
      \repZold^{\alpha_{12},-\beta_{12}}_{s-1,1} +
      \repZold^{\alpha_{12},-\beta_{12}}_{s+1,1}, \quad s \geq 3,
    \\
    &\repR^{\alpha_{1},\beta_{1}}_{s,s} \tensor \repZold^{\alpha_2,\beta_2}_{1,1}
    = \repR^{\alpha_{12},-\beta_{12}}_{s+1,s+1} +
      2\repZold^{\alpha_{12},\beta_{12}}_{s,s+1} +
      \repZold^{\alpha_{12},-\beta_{12}}_{s-1,s} +
      \repZold^{\alpha_{12},-\beta_{12}}_{s+1,s+2}, \quad s \geq 2,
  \end{align*}
  where we write $\alpha_{12}=\alpha_1\alpha_2$ and
  $\beta_{12}=\beta_1\beta_2$. \\
The exceptional cases are listed below:
  \begin{align*}
    \repZ^{\alpha_1,\beta_1}_{1,0}\tensor\repZ^{\alpha_2,\beta_2}_{1,1}
    &=\repZ^{\alpha_{12},\beta_{12}}_{1,1}, 
      \\ 
    \repZ^{\alpha_1,\beta_1}_{1,-1}\tensor\repZ^{\alpha_2,\beta_2}_{1,1}
    &=\repR^{\alpha_{12},-\beta_{12}}_{2,0},  
       \\ 
    \repZ^{\alpha_1,\beta_1}_{1,r}\tensor\repZ^{\alpha_2,\beta_2}_{1,1}
    &=\repZ^{\alpha_{12},\beta_{12}}_{1,r+1} + \repZ^{\alpha_{12},-\beta_{12}}_{2,r+1}, 
    \quad r\ne -1,0,1,
      \\ 
    \repR^{\alpha_1,\beta_1}_{2,0}\tensor\repZ^{\alpha_2,\beta_2}_{1,1}
    &=\repR^{\alpha_{12},-\beta_{12}}_{1,0} + 2 \repZ^{\alpha_{12},\beta_{12}}_{2,1} 
    + \repZ^{\alpha_{12},-\beta_{12}}_{3,1},
	\\      
    \repR^{\alpha_1,\beta_1}_{1,0}\tensor\repZ^{\alpha_2,\beta_2}_{1,1}
    &=\repR^{\alpha_{12},\beta_{12}}_{1,1} + \repZ^{\alpha_{12},\beta_{12}}_{1,2} 
	+ \repZ^{\alpha_{12},-\beta_{12}}_{2,1},
	\\
    \repR^{\alpha_1,\beta_1}_{1,1}\tensor\repZ^{\alpha_2,\beta_2}_{1,1}
    &=\repR^{\alpha_{12},-\beta_{12}}_{2,2} + 2 \repZ^{\alpha_{12},\beta_{12}}_{1,2} 
	+ \repZ^{\alpha_{12},-\beta_{12}}_{2,3}.
  \end{align*}

 The tensor products
  $\repZold\tensor\repZold^{\alpha,\beta}_{2,0}$ decompose as:
  \begin{align*}
&\left. \begin{array}{l}
   \repZold^{\alpha_{1},\beta_{1}}_{s,0} \tensor\repZold^{\alpha_2,\beta_2}_{2,0}
    = \repZold^{\alpha_{12},\beta_{12}}_{s+1,0}+
      \repZold^{\alpha_{12},\beta_{12}}_{s-1,-1} ,
    \\
    \repZold^{\alpha_{1},\beta_{1}}_{s,s} \tensor\repZold^{\alpha_2,\beta_2}_{2,0}
    =
      \repZold^{\alpha_{12},\beta_{12}}_{s-1,s-1}+
      \repZold^{\alpha_{12},\beta_{12}}_{s+1,s} ,
    \\
    \repZold^{\alpha_{1},\beta_{1}}_{s,r} \tensor\repZold^{\alpha_2,\beta_2}_{2,0}
    =
      \begin{cases}
        \repR^{\alpha_{12},-\beta_{12}}_{s,0}+
        \repZold^{\alpha_{12},\beta_{12}}_{s+1,1}, & r=1,
        \\
        \repR^{\alpha_{12},-\beta_{12}}_{s,s}+
        \repZold^{\alpha_{12},\beta_{12}}_{s-1,s}, & r=s+1,
        \\
         \repZold^{\alpha_{12},\beta_{12}}_{s+1,r}+\repZold^{\alpha_{12},-\beta_{12}}_{s,r-1}+\repZold^{\alpha_{12},\beta_{12}}_{s-1,r-1},
         &
         \text{otherwise},
       \end{cases}
\end{array} \right\}
	\qquad
	s \geq 2
    \\
    \intertext{and}
    &\repR^{\alpha_{1},\beta_{1}}_{s,0} \tensor \repZold^{\alpha_2,\beta_2}_{2,0}
    = \repR^{\alpha_{12},\beta_{12}}_{s+1,0} +
      2\repZold^{\alpha_{12},\beta_{12}}_{s-1,-1}+
      \repZold^{\alpha_{12},-\beta_{12}}_{s,-1}  +
      \repZold^{\alpha_{12},-\beta_{12}}_{s-2,-1}, \quad s \geq 3,
    \\
    &\repR^{\alpha_{1},\beta_{1}}_{s,s} \tensor \repZold^{\alpha_2,\beta_2}_{2,0}
    = \repR^{\alpha_{12},\beta_{12}}_{s-1,s-1} +
      2\repZold^{\alpha_{12},\beta_{12}}_{s+1,s}+
      \repZold^{\alpha_{12},-\beta_{12}}_{s+2,s+1}  +
      \repZold^{\alpha_{12},-\beta_{12}}_{s,s-1}, \quad s \geq 2.
  \end{align*}
The exceptional cases are:
  \begin{align*}
    \repZ^{\alpha_1,\beta_1}_{1,0}\tensor\repZ^{\alpha_2,\beta_2}_{2,0}
    &=\repZ^{\alpha_{12},\beta_{12}}_{2,0},
      \\ 
    \repZ^{\alpha_1,\beta_1}_{1,1}\tensor\repZ^{\alpha_2,\beta_2}_{2,0}
    &=\repZ^{\alpha_{12},-\beta_{12}}_{1,0} + \repZ^{\alpha_{12},\beta_{12}}_{2,1},
      \\ 
    \repZ^{\alpha_1,\beta_1}_{1,2}\tensor\repZ^{\alpha_2,\beta_2}_{2,0}
    &=\repR^{\alpha_{12},-\beta_{12}}_{1,1},
      \\ 
    \repZ^{\alpha_1,\beta_1}_{1,r}\tensor\repZ^{\alpha_2,\beta_2}_{2,0}
    &=\repZ^{\alpha_{12},-\beta_{12}}_{1,r-1} + \repZ^{\alpha_{12},\beta_{12}}_{2,r}, 
    \quad r\ne 0,1,2,
      \\ 
    \repR^{\alpha_1,\beta_1}_{2,0}\tensor\repZ^{\alpha_2,\beta_2}_{2,0}
    &=\repR^{\alpha_{12},\beta_{12}}_{3,0} + 2 \repZ^{\alpha_{12},\beta_{12}}_{1,-1} 
    + \repZ^{\alpha_{12},-\beta_{12}}_{2,-1},
	\\      
    \repR^{\alpha_1,\beta_1}_{1,0}\tensor\repZ^{\alpha_2,\beta_2}_{2,0}
    &=\repR^{\alpha_{12},\beta_{12}}_{2,0} + \repZ^{\alpha_{12},-\beta_{12}}_{1,-1} 
    + \repZ^{\alpha_{12},\beta_{12}}_{2,1},
    \\
    \repR^{\alpha_1,\beta_1}_{1,1}\tensor\repZ^{\alpha_2,\beta_2}_{2,0}
    &=\repR^{\alpha_{12},-\beta_{12}}_{1,0} + 2 \repZ^{\alpha_{12},\beta_{12}}_{2,1} 
	+ \repZ^{\alpha_{12},-\beta_{12}}_{3,2}.    
  \end{align*}
\end{Thm}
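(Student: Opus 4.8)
The plan is to compute each tensor product decomposition by passing through the subalgebra chain $\qSL{2|1}\supset\qGL{2}$, for which the decomposition rules are the classical $\SL{2}$ Clebsch--Gordan rules (plus bookkeeping of the $k$-eigenvalue label $r$ and the sign labels $\alpha,\beta$). Concretely, I would first restrict $\repZold^{\alpha_2,\beta_2}_{1,1}$ and $\repZold^{\alpha_2,\beta_2}_{2,0}$ to $\qGL{2}$: by \eqref{atyp} and \eqref{aty} these are $\repX^{\alpha_2,\beta_2}_{1,1}\oplus\repX^{\alpha_2,-\beta_2}_{2,1}$ and $\repX^{\alpha_2,\beta_2}_{2,0}\oplus\repX^{\alpha_2,\beta_2}_{1,-1}$ respectively. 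Then, using the decomposition of $\repZold$ (or $\repR$) into its $\qGL{2}$-constituents $\repX^{\alpha,\beta}_{s,r}$ given in \bref{basis_in_sl21_modules} and \bref{sec:R}, I tensor termwise and apply $\repX^{\alpha_1,\beta_1}_{s_1,r_1}\tensor\repX^{\alpha_2,\beta_2}_{s_2,r_2}=\bigoplus_{t}\repX^{\alpha_1\alpha_2,\beta_1\beta_2}_{t,r_1+r_2}$, where $t$ runs over the usual $\SL{2}$ range $|s_1-s_2|+1,\dots,s_1+s_2-1$ in steps of $2$. This yields the character (i.e.\ the $\qGL{2}$-content) of the left-hand side exactly.

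Having the $\qGL{2}$-character, the second step is to identify which $\qSL{2|1}$-module has that character. Because $\q$ is not a root of unity, the category of finite-dimensional $\qSL{2|1}$-modules in question is a highest-weight category whose blocks are controlled by the $\Exti$-quiver computed in \bref{sec:ext}; the indecomposables relevant here are exactly the simples $\repZold^{\alpha,\beta}_{s,r}$ and the projective covers $\repR^{\alpha,\beta}_{s,0}$, $\repR^{\alpha,\beta}_{s,s}$ whose $\qGL{2}$-contents are recorded in \eqref{aty}--\eqref{ty} and in the Loewy graphs \eqref{proj1}--\eqref{proj3}. Matching $\qGL{2}$-characters pins down the decomposition up to the ambiguity between a genuine nonsplit extension $\repR$ and the direct sum of its composition factors. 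To resolve that ambiguity I would exhibit explicit highest-weight vectors: locate the $\qGL{2}$-highest-weight vectors inside the tensor product using the explicit fermionic action formulas of \bref{basis_in_sl21_modules} together with the coproduct $\Delta(\Fi)=\Fi\tensor 1+k^{-1}\tensor\Fi$, $\Delta(\Ei)=\Ei\tensor k+1\tensor\Ei$, and check whether $B$ or $C$ maps the socle layer nontrivially, i.e.\ whether the relevant $\eta$-type map from \bref{sec:R} is realized. A single such nonvanishing calculation per case forces the indecomposable (projective) summand.

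A useful consistency/short-cut is to organize the computation by first establishing the cases with $\repZold$ \emph{typical} (where all $\Exti$ vanish, so character-matching alone suffices, and one gets the three-term generic formulas), and then the atypical and projective cases; for the latter, exactness of $-\tensor\repZold^{\alpha_2,\beta_2}_{1,1}$ (these are tensor products with finite-dimensional modules, hence exact functors preserving projectivity on both sides since $\repZold^{\alpha_2,\beta_2}_{1,1}$ is a fundamental module) lets me deduce $\repR\tensor\three$-type formulas from the simple-module formulas by applying the functor to the defining short exact sequences $0\to\repZold^{\alpha,\beta}_{s,0}\to\repR^{\alpha,\beta}_{s,0}\to\cdots$ and tracking projective covers. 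The exceptional cases ($s=1$, or $r\in\{-1,0,1,2\}$ where the generic pattern degenerates because a would-be constituent has $s=0$ or two atypicality conditions collide) are handled individually by the same character-plus-highest-weight-vector method, using the conventions for $\repZold^{\alpha,\beta}_{0,0}$ fixed in \bref{sec:ext}.

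The main obstacle is the ambiguity resolution in the third and fourth families (the $\repR\tensor\repZold^{\alpha,\beta}_{1,1}$ and the $r=-1,s-1$ subcases), where the $\qGL{2}$-character is compatible both with a sum of simples and with a $\repR$-summand: here one genuinely must do the highest-weight-vector bookkeeping, carefully tracking the coefficients $c(\nu)$ on the Loewy-graph edges and the normalization of the $\xi$ and $\eta$ maps from \bref{sec:ext}--\bref{sec:R}, to verify that the extension does not split. The sign labels $\alpha_{12},\beta_{12}$ and the precise shift of $r$ are then just bookkeeping that follows from the $K$- and $k$-eigenvalues on highest-weight vectors stated in \bref{basis_in_sl21_modules}. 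I expect the typical cases and the $\repZold^{\alpha,\beta}_{2,0}$-tensor-product cases (which are structurally parallel to the $\repZold^{\alpha,\beta}_{1,1}$ cases by a duality/contragredient argument) to be routine once the $\repZold^{\alpha,\beta}_{1,1}$ cases are in hand.
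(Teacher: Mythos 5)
Your approach coincides with the paper's first step: restrict both factors to $\qGL{2}$, apply the $\SL{2}$ Clebsch--Gordan rule while bookkeeping the $(\alpha,\beta,r)$ labels, and then reassemble the resulting $\qGL{2}$-characters into $\qSL{2|1}$-indecomposables from the list in \bref{basis_in_sl21_modules} and \bref{sec:R}. Where you diverge is in how the split-versus-nonsplit ambiguity is resolved. The paper never computes explicit highest-weight vectors or checks any $\eta$-maps: it disposes of the ambiguity entirely by a structural argument. In the third and fourth families the left-hand side is projective — $\repZ^{\alpha_1,\beta_1}_{s,r}$ with $r\ne 0,s$ is projective simple, and of course $\repR^{\alpha_1,\beta_1}_{s,0}$, $\repR^{\alpha_1,\beta_1}_{s,s}$ are projective — so the tensor product with the finite-dimensional module $\repZ^{\alpha_2,\beta_2}_{1,1}$ is again projective, and among projectives the $\qGL{2}$-character determines the decomposition uniquely. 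In particular the $r=-1$ and $r=s-1$ subcases, which you single out as ``the main obstacle'' requiring explicit $B$/$C$ bookkeeping, are already typical, hence projective; they fall under the same projectivity argument that you yourself invoke as a ``shortcut'' for the $\repR\tensor\three$ cases, and need no highest-weight computation. For the remaining first two families (atypical simple $\tensor\three$) the result is claimed to be a sum of an atypical and a typical simple; here the paper argues uniqueness of the grouping of the $\qGL{2}$-summands, and in any case the extension must split because $\Exti$ vanishes whenever one factor is typical (\bref{sec:ext}). So while your highest-weight-vector route would certainly verify the same answer, it is extra work that the paper avoids entirely; the cleaner organizing principle is: projectivity of the left factor forces the answer in every case except the atypical--simple ones, and those are forced by the vanishing of $\Exti$ against typicals. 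You would do well to restate your plan so that the exactness/projectivity argument is the primary tool (covering all of the third, fourth, and $\repR\tensor\three$ families at once) rather than a consistency check, and to flag explicitly that the atypical--simple cases are rigid because the only candidate extension is with a typical module.
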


\subsubsection{}It follows, in particular, that the set of simple
modules and their projective covers is closed under tensor product
decompositions.

\begin{proof} 
We discuss two cases:
$\repZold^{\alpha_1,\beta_1}_{s,s}\tensor\repZold^{\alpha_2,\beta_2}_{1,1}$
and
$\repZold^{\alpha_1,\beta_1}_{s,s-1}\tensor\repZold^{\alpha_2,\beta_2}_{1,1}$. 
Other cases are similar.

We consider the $\qSL{2|1}$-modules in the left-hand side of the
tensor product as $\qGL{2}$-modules (as explained
in~\bref{basis_in_sl21_modules}) and calculate their tensor product
using the results in~\cite{BGT}.  For the tensor product
$\repZold^{\alpha_1,\beta_1}_{s,s}\tensor\repZold^{\alpha_2,\beta_2}_{1,1}$,
we have
\begin{align}\label{6X}
  \repZold^{\alpha_1,\beta_1}_{s,s}\tensor
  \repZold^{\alpha_2,\beta_2}_{1,1}&=\left(\repX^{\alpha_1,\beta_1}_{s,s}\oplus
  \repX^{\alpha_1,-\beta_1}_{s+1,s}\right)\tensor\left(\repX^{\alpha_2,\beta_2}_{1,1}\oplus
   \repX^{\alpha_2,-\beta_2}_{2,1}\right)
  \\
  \notag
  &=\repX^{\alpha_{12}\beta_{12}}_{s,s+1}\oplus\repX^{\alpha_{12},-\beta_{12}}_{s+1,s+1}
  \oplus\repX^{\alpha_{12},-\beta_{12}}_{s-1,s}
  \oplus\repX^{\alpha_{12},-\beta_{12}}_{s+1,s+1}\oplus
  \repX^{\alpha_{12},\beta_{12}}_{s+2,s+1}\oplus\repX^{\alpha_{12},\beta_{12}}_{s,s}.
\end{align}
Decomposition~\eqref{6X} contains six $\qGL{2}$-modules. Taking into
account that a typical module contains four $\qGL{2}$-summands and an
atypical one contains two, the module in~\eqref{6X} can be the direct
sum of either three atypical $\qSL{2|1}$-modules or one typical and
one atypical module.  Explicitly writing the decompositions of
possible $\qSL{2|1}$-modules shows that there exists only one
$\qSL{2|1}$-module that has the decomposition~\eqref{6X}. The
second and the fifth summands can be combined into
$\repZold^{\alpha_{12},-\beta_{12}}_{s+1,s+1}$ and the other four
summands give $\repZold^{\alpha_{12},\beta_{12}}_{s,s+1}$.  Thus, we
have
\begin{gather*}
  \repZold^{\alpha_1,\beta_1}_{s,s}\tensor
  \repZold^{\alpha_2,\beta_2}_{1,1} 
  =\repZold^{\alpha_{12},-\beta_{12}}_{s+1,s+1}\oplus\repZold^{\alpha_{12},\beta_{12}}_{s,s+1}.
\end{gather*}

We next consider the product
$\repZold^{\alpha_1,\beta_1}_{s,s-1}\tensor\repZold^{\alpha_2,\beta_2}_{1,1}$.
The $\qGL{2}$-decomposition is
\begin{align}
  \label{12X} 
  \repZold^{\alpha_1,\beta_1}_{s,s-1}\tensor
  \repZold^{\alpha_2,\beta_2}_{1,1}=&\left(\repX^{\alpha,\beta}_{s,s-1}
  \oplus\repX^{\alpha_1,-\beta_1}_{s+1,s-1}
  \oplus\repX^{\alpha_1,-\beta_1}_{s-1,s-2}
  \oplus\repX^{\alpha_1,\beta_1}_{s,s-2}\right)\tensor\left(\repX^{\alpha_2,\beta_2}_{1,1}\oplus\repX^{\alpha_2,-\beta_2}_{2,1}\right)
  \\
  \notag
  =&\repX^{\alpha_{12},\beta_{12}}_{s,s}
  \oplus\repX^{\alpha_{12},-\beta_{12}}_{s+1,s}
  \oplus\repX^{\alpha_{12},-\beta_{12}}_{s-1,s-1}
  \oplus\repX^{\alpha_{12},\beta_{12}}_{s,s-1}\oplus\repX^{\alpha_{12},-\beta_{12}}_{s+1,s}
     \oplus\repX^{\alpha_{12},\beta_{12}}_{s+2,s}
  \\
  \notag
  &\oplus\repX^{\alpha_{12},\beta_{12}}_{s,s-1}
  \oplus\repX^{\alpha_{12},-\beta_{12}}_{s+1,s-1}\oplus\repX^{\alpha,-\beta}_{s-1,s-1}
  \oplus\repX^{\alpha_{12},\beta_{12}}_{s,s-1}
  \oplus\repX^{\alpha_{12},\beta_{12}}_{s-2,s-2}
  \oplus\repX^{\alpha_{12},-\beta_{12}}_{s-1,s-2}.
\end{align}
Because $\repZold^{\alpha_1,\beta_1}_{s,s-1}$ is a projective simple
module (see~\bref{simple-projective}), the decomposition of
$\repZold^{\alpha_1,\beta_1}_{s,s-1}~\tensor~\repZold^{\alpha_2,\beta_2}_{1,1}$
involves only projective modules, which, as we recall
from~\bref{sec:R}, consist of all typical simple modules and the
$\repR^{\alpha,\beta}_{s,r}$.  There are several $\qSL{2|1}$-modules
that have the $\qGL{2}$-decomposition~\eqref{12X}, but only one of
them is projective.\footnote{For example, the direct sum of simple
  $\qSL{2|1}$-modules
  $2\repZold^{\alpha_{12},-\beta_{12}}_{s-1,s-1}\oplus\repZold^{\alpha_{12},\beta_{12}}_{s,s}
  \oplus\repZold^{\alpha_{12},\beta_{12}}_{s-2,s-2}
  \oplus\repZold^{\alpha_{12},-\beta_{12}}_{s+1,s}$
  is compatible with the $\qGL{2}$-decomposition \eqref{12X}, but is not a
  projective $\qSL{2|1}$-module.}  Thus, we have
\begin{equation*}
  \repZold^{\alpha_1,\beta_1}_{s,s-1}\tensor
  \repZold^{\alpha_2,\beta_2}_{1,1}=\repR^{\alpha_{12},-\beta_{12}}_{s-1,s-1}
  \oplus\repZold^{\alpha_{12},-\beta_{12}}_{s+1,s}.
\end{equation*}

The cases
$\repR^{\alpha_{1},\beta_{1}}_{s,0}\tensor\repZold^{\alpha_2,\beta_2}_{1,1}$
and
$\repR^{\alpha_{1},\beta_{1}}_{s,s}\tensor\repZold^{\alpha_2,\beta_2}_{1,1}$
are worked out similarly.  We consider $\qGL{2}$-decompositions of
both tensorands and calculate tensor products of $\qGL{2}$-modules. This gives a long direct sum of simple and projective
$\qGL{2}$-modules that each time are combined uniquely into a sum of
projective $\qSL{2|1}$-modules.
\end{proof}

\begin{rem}
Decomposition of all tensor products of finite dimensional $\SL{2|1}$-representations into their indecomposable building blocks was found in \cite{gotz}. 
\end{rem}

\subsubsection{}
We calculate decomposition of $\Chain_{m,n}$ iteratively using Theorem \ref{tp-decomp}. The multiplicities of $\qSL{2|1}$-indecomposable modules are dimensions of 
$\Alg_{m,n}$-modules, which we discuss below.

\subsection{The centralizer of $\qSL{2|1}$ on the mixed tensor product}\label{qwB-on-chain}
We fix bases in the $\three$ and $\bthree$ modules in accordance
with~\bref{basis_in_sl21_modules} and introduce a shorthand
notation for them:
\begin{alignat*}{3}
  f_1&=\ZC{1,1;-1,1}_0,&\quad
  f_2&=\ZB{1,1;-1,1}_0,&\quad
  f_3&=\ZB{1,1;-1,1}_1,\\
  v_1&=\ZB{1,2;1,0}_0,&\quad
  v_2&=\ZC{1,2;1,0}_1,&\quad
  v_3&=\ZC{1,2;1,0}_0.
\end{alignat*}
In the tensor products of two $\qSL{2|1}$ modules, we then have the
operators
\begin{gather*}
  g: \three \tensor \three \mapsto   \three \tensor \three,\qquad
  \EE: \three \tensor \bthree \mapsto   \three \tensor \bthree,\qquad
  h: \bthree \tensor \bthree \mapsto   \bthree \tensor \bthree,
\end{gather*}
that commute with $\qSL{2|1}$ and are explicitly given by
\begin{multline*}
  g:
  \begin{pmatrix}
    f_1\tensor f_1 &
    f_1\tensor f_2 & 
    f_1\tensor f_3
    \\
    f_2\tensor f_1 &
    f_2\tensor f_2 & 
    f_2\tensor f_3
    \\
    f_3\tensor f_1 &
    f_3\tensor f_2 & 
    f_3\tensor f_3
  \end{pmatrix}\mapsto
  \\
  {\textstyle\begin{pmatrix}
    \q^{-2}f_1\tensor f_1&
    -\q^{-1}f_2\tensor f_1&
    -\q^{-1}f_3\tensor f_1
    \\
    (\q^{-2}\!-\!1) f_2\tensor f_1 - \q^{-1} f_1\tensor f_2  &
    -f_2\tensor  f_2&
     -\q^{-1} f_3\tensor  f_2
    \\
    (\q^{-2}\!-\!1) f_3\tensor   f_1 - \q^{-1}f_1\tensor f_3&
    (\q^{-2}\!-\!1) f_3\tensor f_2 - \q^{-1} f_2\tensor f_3&
     -f_3\tensor f_3
  \end{pmatrix}},
\end{multline*}
\begin{gather*}
  \EE:
  \begin{pmatrix}
    f_1\tensor v_1 &
    f_1\tensor v_2 & 
    f_1\tensor v_3
    \\
    f_2\tensor v_1 &
    f_2\tensor v_2 & 
    f_2\tensor v_3
    \\
    f_3\tensor v_1 &
    f_3\tensor v_2 & 
    f_3\tensor v_3
  \end{pmatrix}\mapsto
  {\textstyle\begin{pmatrix}
    1& 0& 0 \\
    0& -\q& 0 \\
	0& 0& 1
  \end{pmatrix} \cdot ( \q^2 f_1 \tensor v_1 + \q f_2 \tensor v_2 - f_3 \tensor v_3)  
  },
\end{gather*}%
and
\begin{multline*}
  h:
  \begin{pmatrix}
    v_1\tensor v_1 &
    v_1\tensor v_2 & 
    v_1\tensor v_3
    \\
    v_2\tensor v_1 &
    v_2\tensor v_2 & 
    v_2\tensor v_3
    \\
    v_3\tensor v_1 &
    v_3\tensor v_2 & 
    v_3\tensor v_3
  \end{pmatrix}\mapsto
  \\
  {\textstyle\begin{pmatrix}
    \q^{-2}v_1\tensor  v_1 &
	(\q^{-2}\!-\!1) v_1\tensor v_2 - \q^{-1} v_2\tensor v_1 &
	(\q^{-2}\!-\!1) v_1\tensor v_3 - \q^{-1} v_3\tensor v_1 &
    \\
    - \q^{-1} v_1\tensor v_2 &
    -v_2\tensor v_2 &  
    (\q^{-2}\!-\!1) v_2\tensor v_3 - \q^{-1} v_3\tensor v_2 
    \\
	-\q^{-1} v_1\tensor v_3 &
    -\q^{-1} v_2\tensor v_3 &
    -v_3\tensor  v_3
  \end{pmatrix}}.
\end{multline*}

On $\Chain_{m,n}$, we define the operators
\begin{align*}
  g_j &= \underbracket{\one\tensor\dots\tensor\one}_{m-j-1}\tensor g\tensor \underbracket{\one \tensor \dots \tensor \one}_{n+j-1}, \\ 
  h_i&=\underbracket{\one\tensor\dots\tensor\one}_{m+i-1}\tensor h\tensor \underbracket{\one \tensor \dots \tensor \one}_{n-i-1},\\
  \EE&= \underbracket{\one\tensor\dots\tensor\one}_{m-1}\tensor \EE\tensor \underbracket{\one \tensor \dots \tensor \one}_{n-1}.
\end{align*}
These are the generators of a quantum walled Brauer algebra, which we
discuss in the next subsection.

\subsection{The quantum walled Brauer algebra}
\subsubsection{}\label{qwb_relations}
The algebra $\qwB_{m,n}$ is the associative unital algebra generated
by $g_i$, $\EE$, $h_j$, where $1\leq i \leq m-1$ and
$1\leq j \leq n-1$, with relations (see \cite{Leduc,Halv,KM1993})
\begin{gather*}
 g_i h_j = h_j g_i,
 \\
 (g_i-\gamma)(g_i - \delta) =0, \qquad 
 (h_j-\gamma)(h_j -\delta) = 0,
 \\
 g_i g_j = g_j g_i, \quad |i-j|>1, \qquad
 h_i h_j = h_j h_i, \quad |i-j|>1,
 \\
 g_i g_{i+1} g_i = g_{i+1} g_i g_{i+1}, \qquad
 h_i h_{i+1} h_i = h_{i+1} h_i h_{i+1},
\end{gather*}
\begin{gather*}
  \EE \EE = \ffrac{\theta +1}{ \gamma +\delta}\EE,
  \\
  \EE g_1 \EE = \EE,\qquad
  \EE h_1 \EE = \EE,
  \\
  \EE g_i=g_i \EE,\quad 2\leq i\leq m-1,\qquad
  \EE h_j=h_j \EE,\quad 2\leq j\leq n-1,
  \\
  \EE  g_1  h_1^{-1} \EE  (g_1 - h_1)=0,\qquad
  (g_1 - h_1) \EE  g_1  h_1^{-1}  \EE = 0.
\end{gather*}
These relations involve complex parameters $\gamma$, $\delta$,
and $\theta$, and we sometimes use the notation
$\qwB_{m,n}(\gamma,\delta,\theta)$ for the algebra, although
one parameter can be eliminated from the relations by renormalizing
the generators. We write the relations in the present form for more
convenient comparison with different choices in literature.

\begin{rem}
The algebra $\qwB_{m,n}$ has a presentation by tangle diagrams, see \cite{ST2015}. 
\end{rem}

\begin{rem}
In \cite{Stroppel12, CDM2007, Cox2010} the one-parameter walled Brauer algebra is discussed.
It can be considered as a classical limit of quantum walled Brauer algebra $\qwb_{m,n}$.
To get this limit from the algebra with relations \bref{qwb_relations} we can do the following. 
By renormalization of the generators, parameter $\gamma$ can be set to $\gamma=-1$.
We introduce a complex parameter~$r$:
\begin{equation*}
\theta = - \delta^r
\end{equation*}
so that the relation reads
$\EE \EE = - \ffrac{\delta^r -1}{\delta-1}\EE$.
Then we consider the limit $\delta \to 1$. The dependent on parameters algebra relations become 
\begin{align*}
&g_i^2=h_i^2=1, \\
&\EE \EE = - r\EE.
\end{align*}
Such an algebra is called the (classical) walled Brauer algebra with (only one) parameter $r$. We use the notation $\wb_{m,n}(-r)$ for it. 
\end{rem}

\begin{thm}
The generators defined in~\bref{qwB-on-chain} satisfy the $\qwB_{m,n}$
relations with the parameters
\begin{align}\label{ChainNichRules}
&\gamma = -1,  \nonumber \\ 
&\delta  =  \q^{-2}, \\
&\theta = -\q^{-2}. \nonumber
\end{align}
\end{thm}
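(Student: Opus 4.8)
The plan is to verify each defining relation of $\qwB_{m,n}$ from~\bref{qwb_relations} as an identity of operators on $\Chain_{m,n}$, exploiting \emph{locality}: each of $g_j$, $h_i$, $\EE$ is the identity away from two adjacent tensor factors, and in any single relation the operators involved occupy at most four consecutive factors (the worst case, $\EE g_1 h_1^{-1}\EE\,(g_1-h_1)$, is supported on positions $m-1,m,m+1,m+2$, i.e.\ on $\three\tensor\three\tensor\bthree\tensor\bthree$). Hence it suffices to check each identity on one of the finite–dimensional spaces $\three^{\tensor 2}$, $\three^{\tensor 3}$, $\bthree^{\tensor 2}$, $\bthree^{\tensor 3}$, $\three\tensor\bthree$, $\three^{\tensor 2}\tensor\bthree$, $\three\tensor\bthree^{\tensor 2}$, $\three^{\tensor 2}\tensor\bthree^{\tensor 2}$; and all relations asserting that operators of disjoint support commute ($g_ig_j=g_jg_i$ and $h_ih_j=h_jh_i$ for $|i-j|>1$, $g_ih_j=h_jg_i$ for all $i,j$, and $\EE g_i=g_i\EE$, $\EE h_j=h_j\EE$ for $i,j\geq 2$) are then immediate.

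For the relations among the $g$'s (resp.\ the $h$'s) alone, the braid relations $g_ig_{i+1}g_i=g_{i+1}g_ig_{i+1}$ and $h_ih_{i+1}h_i=h_{i+1}h_ih_{i+1}$ are the Yang--Baxter equation for $g$ on $\three^{\tensor 3}$ (resp.\ $h$ on $\bthree^{\tensor 3}$); since $g$ and $h$ are, up to transposition of tensor factors, the action of the universal $R$-matrix of $\qSL{2|1}$ this is automatic, but in any case it is a direct check on the $9\times 9$ matrices. For the quadratic relation, observe that $g$ commutes with $\qSL{2|1}$ and that, by Theorem~\ref{tp-decomp} (the $\repZ_{s,s}\tensor\repZ_{1,1}$ line with $s=1$), $\three\tensor\three=\repZ^{1,-1}_{2,2}\oplus\repZ^{1,1}_{1,2}$ is a sum of two non-isomorphic simple modules; hence $g$ acts as a scalar on each summand, and these scalars are read off the matrix of $g$ as $\q^{-2}$ (from $g(f_1\tensor f_1)=\q^{-2}f_1\tensor f_1$) and $-1$ (from $g(f_2\tensor f_2)=-f_2\tensor f_2$). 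This gives $(g-\gamma)(g-\delta)=0$ with $\gamma=-1$, $\delta=\q^{-2}$; the same argument with $\bthree\tensor\bthree=\repZ^{1,1}_{3,0}\oplus\repZ^{1,1}_{1,-1}$ yields $(h-\gamma)(h-\delta)=0$. In particular the eigenvalues $-1$ and $\q^{-2}$ of $h$ are nonzero, so $h_1$ is invertible and $h_1^{-1}$ in~\bref{qwb_relations} is meaningful.

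Finally, the $\EE$-relations. By Theorem~\ref{tp-decomp}, $\three\tensor\bthree=\repZ^{1,1}_{1,0}\oplus\repZ^{1,-1}_{2,1}$ has a unique trivial submodule $\repZ^{1,1}_{1,0}$, and from the explicit formula for $\EE$ one sees that $\EE$ has rank one with $\EE\EE=-\EE$. Since $\gamma+\delta=\q^{-2}-1$ and $\theta+1=1-\q^{-2}$, one has $\tfrac{\theta+1}{\gamma+\delta}=-1$ precisely when $\theta=-\q^{-2}$, so $\EE\EE=\tfrac{\theta+1}{\gamma+\delta}\EE$ holds. It remains to verify $\EE g_1\EE=\EE$ on $\three^{\tensor 2}\tensor\bthree$, $\EE h_1\EE=\EE$ on $\three\tensor\bthree^{\tensor 2}$, and $\EE g_1 h_1^{-1}\EE\,(g_1-h_1)=0$ together with $(g_1-h_1)\,\EE g_1 h_1^{-1}\EE=0$ on $\three^{\tensor 2}\tensor\bthree^{\tensor 2}$. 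Writing $\EE=\iota\pi$ on the wall pair, with $\pi\colon\three\tensor\bthree\to\one$ and $\iota\colon\one\to\three\tensor\bthree$ satisfying $\pi\iota=-1$, each of these operators factors through $\iota$ and $\pi$, so every such identity collapses to one on a one- or nine-dimensional space, and the rank-one property of $\EE$ keeps even the $81$-dimensional last relation under control.

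The step I expect to be the real obstacle is precisely this last mixed relation: it is the only one that simultaneously involves a $g$-crossing, an $h$-crossing and the cap--cup $\EE$, and reducing it requires the ``sliding'' identities that move a braiding $g_1$ or $h_1$ past the contraction $\pi$ (equivalently, past $\iota$), i.e.\ the compatibility of the two braidings with the evaluation and coevaluation $\three\tensor\bthree\to\one\to\three\tensor\bthree$ in the normalization fixed in~\bref{qwB-on-chain}. Granting those, $\EE g_1 h_1^{-1}\EE$ becomes an explicit scalar multiple of $\EE$ (extended by the identity), $g_1-h_1$ annihilates it on the appropriate side, and the relations $\EE g_1\EE=\EE$, $\EE h_1\EE=\EE$ follow from the same sliding identities together with $\pi\iota=-1$ and the quadratic relations; everything else is routine matrix algebra.
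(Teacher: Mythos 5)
The paper states this theorem without an explicit proof; it is presented as a direct computational observation from the matrices written down in~\bref{qwB-on-chain}. Your proposal is precisely the natural verification one would perform and is correct: locality reduces all relations to finitely many checks on small tensor powers; the intertwiner property together with the multiplicity-free decompositions $\three\tensor\three=\repZ^{1,-1}_{2,2}\oplus\repZ^{1,1}_{1,2}$ and $\bthree\tensor\bthree=\repZ^{1,1}_{3,0}\oplus\repZ^{1,1}_{1,-1}$ (from Theorem~\bref{tp-decomp}) force the quadratic relations with eigenvalues $-1$ and $\q^{-2}$ read directly off the matrices; and the rank-one factorization of $\EE$ through the trivial summand $\repZ^{1,1}_{1,0}\subset\three\tensor\bthree$, with $\pi\iota=-1$ and hence $\EE^2=-\EE$, pins down $\theta=-\q^{-2}$, the remaining mixed relations being finite matrix identities on $\three^{\tensor 2}\tensor\bthree$, $\three\tensor\bthree^{\tensor 2}$ and $\three^{\tensor 2}\tensor\bthree^{\tensor 2}$ as you say.
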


\begin{rem}
By choice of normalization in matrices, the parameters $\gamma$ and $\delta$ can be changed, however the relation 
\begin{equation}\label{NichRules}
    \theta = \ffrac{\delta}{\gamma}
\end{equation}
remains invariant. This relation means that we consider a degenerate case in which the algebra becomes non-semisimple as we discuss below.
\end{rem}

\begin{cor}
The endomorphism algebra of $\,\qSL{2|1}$-module $\Chain_{m,n}$ is isomorphic to the quotient of the
algebra $\qwb_{m,n}$ with special parameters (\ref{ChainNichRules}).
\end{cor}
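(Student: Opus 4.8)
The plan is to identify the map $\qwb_{m,n} \to \operatorname{End}_{\qSL{2|1}}(\Chain_{m,n})$ whose existence and well-definedness was established in the preceding theorem (the generators $g_j$, $h_i$, $\EE$ satisfy the defining relations with parameters \eqref{ChainNichRules}), and to show this map is surjective; the corollary then follows since a surjection of algebras realizes the target as a quotient of the source. Note first that the image of this map is genuinely the full centralizer $\Alg_{m,n} = \operatorname{End}_{\qSL{2|1}}(\Chain_{m,n})$: this is precisely the content of the cited results \cite{ShaderMoon02, DDS-1, DDS-2, Stroppel12}, where the double centralizer (Schur--Weyl type) duality for $\qSL{2|1}$ acting on mixed tensor powers of the fundamental representation is proved. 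So the statement reduces to combining the algebra homomorphism of the previous theorem with this known surjectivity onto the centralizer.

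Concretely, I would proceed as follows. First, recall from the previous theorem that the assignment $g_i \mapsto g_i$, $h_j \mapsto h_j$, $\EE \mapsto \EE$ (the operators on $\Chain_{m,n}$ defined in~\bref{qwB-on-chain}) extends to a unital algebra homomorphism $\varphi\colon \qwb_{m,n}(\gamma,\delta,\theta) \to \operatorname{End}_{\oC}(\Chain_{m,n})$ with the parameters~\eqref{ChainNichRules}; one checks directly from the explicit matrices for $g$, $h$, $\EE$ that each of these operators commutes with the $\qSL{2|1}$-action (they are built from $R$-matrices and the evaluation/coevaluation morphisms of the rigid braided category of $\qSL{2|1}$-modules), so in fact $\varphi$ lands in $\Alg_{m,n} = \operatorname{End}_{\qSL{2|1}}(\Chain_{m,n})$. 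Second, invoke the Schur--Weyl duality of \cite{ShaderMoon02, DDS-1, DDS-2, Stroppel12}: since $\q$ is not a root of unity, the image $\varphi(\qwb_{m,n})$ is \emph{all} of $\operatorname{End}_{\qSL{2|1}}(\Chain_{m,n})$. Third, conclude: $\varphi$ descends to an isomorphism $\qwb_{m,n}/\ker\varphi \xrightarrow{\;\sim\;} \Alg_{m,n}$, exhibiting the endomorphism algebra as the claimed quotient with special parameters.

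The main obstacle — or rather the point requiring care rather than real difficulty — is justifying that $\varphi$ surjects onto the centralizer, i.e.\ that no endomorphism of $\Chain_{m,n}$ commuting with $\qSL{2|1}$ fails to be a polynomial in the $g_i, h_j, \EE$. This is not proved from scratch here; it is imported from the literature, and the citation must be to the version that covers $\qGL{M|N}$ or $\qSL{M|N}$ for generic $\q$ on mixed tensor powers (the walled-Brauer analogue of the Jimbo–Drinfeld Schur–Weyl duality). The one genuinely local verification needed is that the specific operators $g$, $h$, $\EE$ written down in~\bref{qwB-on-chain} are the \emph{same} intertwiners (up to the chosen normalization) that appear in those references — i.e.\ that $g, h$ are the braidings $\check R$ on $\three^{\tensor 2}$, $\bthree^{\tensor 2}$ and $\EE$ is (a normalization of) the composite $\three\tensor\bthree \to \oC \to \three\tensor\bthree$ of evaluation and coevaluation. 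Granting that identification, which is a matter of matching the explicit matrices against the known $R$-matrix of $\qSL{2|1}$ in the fundamental representation, the corollary is immediate.

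A remark worth adding: the kernel $\ker\varphi$ is exactly the ideal generated by the vanishing of the appropriate ``exceptional'' antisymmetrizer/idempotent forced by $\dim\three = \dim\bthree = 3$ together with the superdimension $\operatorname{sdim}(\three) = \dim\repX - \dim\repX = 1$ being small; this is why $\Alg_{m,n}$ is a \emph{proper} quotient rather than all of $\qwb_{m,n}$ once $m+n$ is large enough, and why the bimodule is non-semisimple — consistent with the relation $\theta = \delta/\gamma$ noted in \eqref{NichRules}. A precise generators-and-relations description of $\ker\varphi$ is not needed for the corollary and is deferred (the paper only describes simple and projective $\Alg_{m,n}$-modules rather than $\Alg_{m,n}$ itself).
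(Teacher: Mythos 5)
Your proposal matches the paper's reasoning exactly: the preceding theorem supplies the algebra homomorphism $\qwb_{m,n}\to\operatorname{End}_{\qSL{2|1}}(\Chain_{m,n})$ landing in the centralizer, and surjectivity is imported from the Schur--Weyl-type duality of \cite{ShaderMoon07, DDS-1, DDS-2, ShaderMoon02, Stroppel12}, giving the quotient presentation. The paper states the corollary without an explicit proof and records the surjection $\Psi^{M,N}_{m,n}$ shortly afterwards in precisely the way you describe, so no gap.
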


One can consider an algebra $\qSL{M|N}$ for arbitrary positive integers $M$ and $N$.
Let $V$ and $V^\star$ be fundamental representation of $\qSL{M|N}$ and 
its dual. 
We let $\Alg^{M,N}_{m,n}$ denote the algebra of endomorphisms of $\qSL{M|N}$ on mixed tensor product 
${V^\star}^{\tensor m} \tensor V^{\tensor n}$.
As was shown in \cite{ShaderMoon07}
(see also \cite{DDS-1, DDS-2, ShaderMoon02, Stroppel12})
there is a surjective homomorphism
\begin{equation}
\Psi^{M,N}_{m,n}: 
\qwb_{m,n}(\gamma = -1, \delta  =  \q^{-2}, \theta = -\q^{-2(M-N)}) \to \Alg^{M,N}_{m,n}.
\end{equation}
Here the parameter $\q$ is the same as in the algebra $\qSL{M|N}$.
In the classical limit we conclude that the algebra of endomorphisms of $\SL{M|N}$ on mixed tensor product of its fundamental representations is a quotient of the algebra $\wb_{m,n}(r)$ with $r=N-M$. This is consistent with the results of 
\cite{ShaderMoon02, Stroppel12} because classical algebras $\wb_{m,n}(r)$ and $\wb_{m,n}(-r)$ are isomorhic to each other. Indeed, the isomorphism is given by the formulas 
$g_i' = -g_i$, ${h_j' = -h_j}$ and $\EE' = -\EE$.

We note that for ${N=0}$ the algebra $\Alg^{M,0}_{m,n}$ is semisimple
and $\mathrm{ker}\, \Psi^{M,0}_{m,n}$ contains the whole radical of 
$\qwb_{m,n}$, see \cite{Halv}.

At the end of this section we formulate two statements important for the sequel. 

\begin{Conj1}\label{conj_1}
Representation categories of the algebra $\qwB_{m,n}$ with generic values of parameter 
$\ffrac{\delta}{\gamma}$ and of the (classical) walled Brauer algebra are equivalent as abelian categories.
\end{Conj1}

The walled Brauer algebra has quasihereditary structure, see \cite{Cox2010}. According to our first conjecture we suppose $\qwB_{m,n}$ with generic values of the parameter $\ffrac{\delta}{\gamma}$ to be also quasihereditary.

In the following sections we consider only the case $M=2$, $N=1$ and use the notation 
$\Alg_{m,n}$ for $\Alg^{2,1}_{m,n}$. The second important statement is (see also \cite{StollWerth2014}) 
\begin{Conj2}\label{conj_2}
The algebra $\Alg_{m,n}$ is quasihereditary.\footnote{The conjecture about quasihereditary structure in the general case $\Alg^{M,N}_{m,n}$ can apparently be formulated but is beyond the scope of this paper.}
\end{Conj2}

\section{Modules over $\qwB_{m,n}$ and $\Alg_{m,n}$}\label{sec:modules}
In this section we describe Specht and simple modules for $\qwB_{m,n}$ and simple and projective modules for algebra $\Alg_{m,n}$.

\subsection{$\qwB_{m,n}$ Specht modules}

\subsubsection{}
A finite integer sequence 
$\mu = (\mu_1, \mu_2, \dots \mu_r)$ is called a partition, if 
$\mu_1 \geq \mu_2 \geq \dots \mu_r >0$.

A bipartition is a pair of partitions
$\la=(\la^L, \la^R)$.  Let~$\Lambda$ be the set of all bipartitions.
For each integer $0 \leq f \leq \min(m,n)$, we set
\begin{equation}
  \Lambda_{m,n}(f) = \{ \la \in \Lambda \mid  m-|\la^L| = n-|\la^R| = f \},
\end{equation}
where $|\lambda|$ is the sum of elements of a partition, and
\begin{equation}
\Lambda_{m,n} = \bigcup_{f=0}^{\min(m,n)} \, \Lambda_{m,n}(f).
\end{equation}

The set $\Lambda_{m,n}$ is in bijective correspondence with the set of
$\qwB_{m,n}$ Specht modules~\cite{En}.
We let $S(\la)$ denote the $\qwb_{m,n}$-Specht module
corresponding to the bipartition~$\lambda$.

The following claim is given in \cite{Stroppel12}
\begin{thm}\label{simpleQWBmodules}
  For generic values of the $\qwB_{m,n}$ parameters, each Specht
  module is simple, and the sets of Specht and simple modules
  coincide.
\end{thm}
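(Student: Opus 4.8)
The plan is to deduce this from the cellular structure of $\qwB_{m,n}$ together with the standard semisimplicity criterion for cellular algebras. First I would recall from \cite{En} (see also \cite{ST2015}) that $\qwB_{m,n}$ is cellular in the sense of Graham--Lehrer, with cell datum indexed by $\Lambda_{m,n}$: each bipartition $\la\in\Lambda_{m,n}$ gives a cell (Specht) module $S(\la)$ carrying a canonical symmetric bilinear form $\langle\,,\,\rangle_\la$, and $D(\la)\eqdef S(\la)/\mathrm{rad}\langle\,,\,\rangle_\la$. General cellular theory then yields: (i) the nonzero $D(\la)$ are a complete, pairwise non-isomorphic list of simple $\qwB_{m,n}$-modules; and (ii) $\qwB_{m,n}$ is semisimple if and only if $D(\la)=S(\la)$ for all $\la$, equivalently if and only if every Gram determinant $\det\langle\,,\,\rangle_\la$ is nonzero, equivalently $\dim\qwB_{m,n}=\sum_{\la}(\dim S(\la))^2$.

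The core of the argument is then to show that for generic parameters all Gram determinants are nonzero. After clearing denominators, each $\det\langle\,,\,\rangle_\la$ is a Laurent polynomial in $\gamma,\delta,\theta$, so the bad locus (where some $\det\langle\,,\,\rangle_\la$ vanishes) is a proper Zariski-closed subset of parameter space, \emph{provided} no $\det\langle\,,\,\rangle_\la$ vanishes identically. To exclude the latter it suffices to exhibit one specialization of $(\gamma,\delta,\theta)$ at which $\qwB_{m,n}$ is semisimple of dimension $\sum_{\la}(\dim S(\la))^2$: at a ``separated'' specialization ($\theta$ generic, $\gamma=-\delta$) the algebra degenerates to a direct sum of pieces built from $\Hecke_m$ and $\Hecke_n$, which are semisimple for $\q$ not a root of unity; alternatively one passes to the classical limit $\wb_{m,n}(-r)$ and invokes its known generic semisimplicity \cite{CDM2007, Cox2010}, then lifts semisimplicity to a Zariski-dense set of $(\gamma,\delta,\theta)$ by a flatness/deformation argument (the cell module dimensions are constant, so semisimplicity is an open condition). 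Either way, the complement of the union of the finitely many vanishing loci is dense, and on it $S(\la)=D(\la)$ is simple for every $\la\in\Lambda_{m,n}$; by (i) these exhaust the simples and are pairwise distinct, so the sets of Specht and simple modules coincide.

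The main obstacle is precisely the non-vanishing step, i.e.\ producing an honest specialization at which $\qwB_{m,n}(\gamma,\delta,\theta)$ is semisimple of the correct dimension. This is not entirely free: the naive specialization $\q=1$ only gives the classical walled Brauer algebra, whose own generic semisimplicity is itself a theorem, so one has to either do the separated-parameter degeneration carefully or import the classical result plus deformation-flatness. Once such a specialization is fixed, the remainder is formal cellular-algebra bookkeeping.
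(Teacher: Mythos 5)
The paper itself offers no proof of this theorem: it is stated with the remark ``The following claim is given in \cite{Stroppel12}'' and is simply imported as a citation (see also \cite{En}). So there is no ``paper's own proof'' to compare against; your job here was effectively to supply the argument the authors delegated to a reference.

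Your overall strategy is the standard and correct one: use the cellularity of $\qwB_{m,n}$ over $\Lambda_{m,n}$ from \cite{En}, reduce to non-vanishing of the Gram determinants, observe these are Laurent polynomials in the parameters, and conclude that the bad locus is proper Zariski-closed once you exhibit a single semisimple point. That is the same structure of argument one finds in the quantum walled Brauer literature (e.g.\ \cite{En}, \cite{RuiSong_1}). However, the specific specialization you propose to witness semisimplicity --- $\gamma=-\delta$ with $\theta$ generic --- does not make sense: the defining relation $\EE\EE=\frac{\theta+1}{\gamma+\delta}\EE$ has $\gamma+\delta$ in the denominator, so the algebra is not defined on the locus $\gamma=-\delta$ unless one simultaneously forces $\theta=-1$, and then the coefficient is $0/0$. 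You should drop this option. Your second option (pass to the classical limit $\wb_{m,n}(-r)$ and lift) is workable but requires the extra check that the cell basis and cell module dimensions specialize coherently along the degeneration, which you acknowledge but do not supply. A cleaner witness, available inside the paper's own framework, is the semisimple Schur--Weyl situation $N=0$: for $M\geq m+n$ the surjection $\Psi^{M,0}_{m,n}\colon\qwB_{m,n}(-1,\q^{-2},-\q^{-2M})\to\Alg^{M,0}_{m,n}$ is an isomorphism onto a semisimple algebra of dimension $\sum_{\la\in\Lambda_{m,n}}(\dim S(\la))^2$ (cf.\ \cite{Halv}, and the remark that $\ker\Psi^{M,0}_{m,n}$ contains the whole radical). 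This gives an honest parameter value at which every Gram determinant is nonzero, after which your Zariski-density argument closes the proof. With that replacement the argument is complete; the cellular bookkeeping in your items (i)--(ii) is standard Graham--Lehrer theory and is correctly invoked.
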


\subsection{Modules over $\qwb_{m,n}$ with special parameters}\label{modulqwb}
We now consider the cathegory of $\qwb_{m,n}$ modules with the parameters
related as in (\ref{NichRules}).  The algebra is then nonsemisimple,
and some of the Specht modules $S(\la)$ become reducible.  

Let $D(\la)$ and $K(\la)$ be the simple head and the projective cover for $S(\la)$. 
Below we also use the notation $D\big[\la^L,\la^R \big]$ and $K\big[\la^L,\la^R \big]$ for
$D((\la^L,\la^R))$ and $K((\la^L,\la^R))$ respectively.  

In \cite[Theorem~2.7]{CDM2007} the full classification of simple modules over the walled Brauer algebra is given. Thus, assuming the Conjecture 1 \eqref{conj_1} (but see also \cite[Theorem~8.1]{En}) we have the following
\begin{lemma} 
  If $\EE \EE \ne 0$, the modules $D(\la)$, $\la \in \Lambda_{m,n}$ give a complete set of
  simple modules for the algebra $\qwb_{m,n}$.
\end{lemma}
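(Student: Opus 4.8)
The plan is to reduce the statement to the known classification of simple modules over the \emph{classical} walled Brauer algebra and then transport it to $\qwb_{m,n}$ along the equivalence of abelian categories provided by Conjecture~1. Note first that in the special‑parameter regime \eqref{NichRules} only the relation $\theta=\delta/\gamma$ is imposed, while the value of $\delta/\gamma$ itself stays generic (in our situation $\delta/\gamma=-\q^{-2}$, which is generic because $\q$ is not a root of unity); hence Conjecture~1 does apply. It yields an equivalence between the category of $\qwb_{m,n}$-modules and the category of modules over the classical walled Brauer algebra $\wb_{m,n}$; after the normalisation $\gamma=-1$ and the substitution $\theta=-\delta^{r}$, the matching classical parameter is $-r$, and the hypothesis $\EE\EE\neq 0$ corresponds precisely to this classical parameter being nonzero.

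I would then invoke \cite[Theorem~2.7]{CDM2007}: for the classical walled Brauer algebra with nonzero parameter the simple modules are indexed by the full set of bipartitions $\Lambda_{m,n}$ --- each Specht (cell) module has a nonzero simple head, these heads are pairwise non-isomorphic, and they exhaust the simple modules, no bipartition being discarded (in contrast with the degenerate case). Pushing this through the equivalence shows that $\qwb_{m,n}$ has exactly $|\Lambda_{m,n}|$ isomorphism classes of simple modules. On the other hand, $\qwb_{m,n}$ is cellular with cell modules the Specht modules $S(\la)$, $\la\in\Lambda_{m,n}$ \cite{En} (and quasihereditary, granting Conjecture~1 together with \cite{Cox2010}); by the general theory of cellular algebras the nonzero simple heads $D(\la)$ of the $S(\la)$ are pairwise non-isomorphic and form a complete set of simples, so the number of simples equals the number of $\la\in\Lambda_{m,n}$ with $D(\la)\neq 0$. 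Comparing the two counts forces $D(\la)\neq 0$ for every $\la\in\Lambda_{m,n}$, and hence $\{D(\la)\mid\la\in\Lambda_{m,n}\}$ is a complete, irredundant set of simple $\qwb_{m,n}$-modules. Alternatively, \cite[Theorem~8.1]{En} treats the quantum algebra directly, and I would use it as an independent cross-check rather than redoing the transport.

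A point to handle with care is the hypothesis $\EE\EE\neq 0$: it is exactly what keeps us in the non-degenerate branch, where the labelling set is all of $\Lambda_{m,n}$. In the limiting case $\EE\EE=0$ (vanishing classical parameter) the algebra changes and one must remove from the labelling set the bipartitions that ``use up all of $\EE$'', so the clean indexing by $\Lambda_{m,n}$ would be lost. It is also worth recording that this lemma is the nonsemisimple refinement of Theorem~\ref{simpleQWBmodules}: for truly generic parameters every Specht module is already simple, i.e.\ $D(\la)=S(\la)$, whereas under \eqref{NichRules} the algebra is nonsemisimple, some $S(\la)$ are reducible, and one genuinely has to pass to heads.

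The main obstacle is not any of this bookkeeping but Conjecture~1 itself: establishing an equivalence of abelian categories between $\qwb_{m,n}$-modules and $\wb_{m,n}$-modules --- ideally one compatible with the cellular structures on both sides --- is precisely what is left open. Granting it, the lemma becomes essentially a dictionary lookup against \cite[Theorem~2.7]{CDM2007}, with the cellular structure of $\qwb_{m,n}$ from \cite{En} supplying the final counting step.
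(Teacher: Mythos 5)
Your argument matches the paper's own (very terse) treatment: the paper likewise just invokes Conjecture~1 to transport the problem to the classical walled Brauer algebra, then quotes \cite[Theorem~2.7]{CDM2007} (with \cite[Theorem~8.1]{En} as an alternative reference) to conclude. The extra bookkeeping you supply — identifying the matching classical parameter via $\theta=-\delta^{r}$, checking that $\EE\EE\neq0$ selects the nondegenerate branch, and the cellular counting argument from \cite{En} to force $D(\la)\neq0$ for all $\la\in\Lambda_{m,n}$ — is consistent with and fills in what the paper leaves implicit, so this is the same proof, just spelled out more fully.
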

The decomposition multiplicities $d_{\la,\mu}=\big[S(\mu):D(\la)\big]$ for the $S(\la)$-modules in terms of their simple subquotients are determined  in~\cite{Cox2010}.
Because of the quasihereditary structure of $\qwb_{m,n}$ each projective module $K(\la)$ has a filtration by Specht modules. Let $\tilde{d}_{\la\mu}~=~\big[K(\la):S(\mu)\big]$ be the multiplicity of a given Specht module $S(\la)$ in the filtration; then, by the Brauer-Humphreys reciprocity  (see~\cite{Cox2010} and references therein)
\begin{equation}\label{B-H_rec}
\tilde{d}_{\la\mu}=d_{\la\mu}.
\end{equation}
 We use this statement to construct projective modules for $\Alg_{m,n}$ in the next subsection.

\subsection{Modules in the decomposition of the mixed tensor product}
As a $\Alg_{m,n}\boxtimes\qSL{2|1}$-bimodule, the mixed tensor product $\Chain_{m,n}$ decomposes into a direct sum of indecomposable bimodules. 
\begin{dfn}
For non-negative integers $p,q$, a partition $\mu$ is called a $(p,q)$-hook partition if it doesn't contain a box in the $(p+1,q+1)$-position, i.e. $\mu_{p+1}<q+1$.
\end{dfn}
\begin{dfn}
(see \cite{CW11}) 
For non-negative integers $p,q$ a bipartition $\la=(\la^L,\la^R)$ is called a
$(p,q)$-cross bipartition if 
there exist non-negative integers $p_1, p_2, q_1, q_2$ such that 
$\la^L$ is a $(p_1,q_1)$-hook partition, $\la^R$ is a $(p_2,q_2)$-hook partition and $p_1+p_2 \leq p$, $q_1+q_2 \leq q$.
\end{dfn}
Let $\Cr_{m,n}$ be the subset of all $(2,1)$-cross bipartitions in $\Lambda_{m,n}$. Assuming the Conjecture~1 (\ref{conj_1})  and applying the statements from \cite{Stroppel12}, \cite{CW11} 
for $M=2$, $N=1$, we have
\begin{prop}\label{crossbip}
If $\la\in\Cr_{m,n}$ then $\mathrm{ker}\, \Psi^{2,1}_{m,n}$ acts as zero on $D(\la)$. The modules $D(\la), \, \la\in\Cr_{m,n}$ give a complete set of simple $\Alg_{m,n}$-modules.
\end{prop}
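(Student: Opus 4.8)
The plan is to identify the simple $\Alg_{m,n}$-modules by pulling back along the surjection $\Psi^{2,1}_{m,n}\colon \qwb_{m,n} \to \Alg_{m,n}$ and determining exactly which simple $\qwb_{m,n}$-modules $D(\la)$ (with $\la\in\Lambda_{m,n}$, as given by Lemma after~\eqref{B-H_rec} and Theorem~\bref{simpleQWBmodules}'s special-parameter analogue) are annihilated by $\ker\Psi^{2,1}_{m,n}$. Since every simple $\Alg_{m,n}$-module is a simple $\qwb_{m,n}$-module on which $\ker\Psi^{2,1}_{m,n}$ acts by zero, and conversely, it suffices to prove the two assertions: (i) if $\la\in\Cr_{m,n}$ then $\ker\Psi^{2,1}_{m,n}$ kills $D(\la)$; (ii) if $\la\in\Lambda_{m,n}\setminus\Cr_{m,n}$ then $\ker\Psi^{2,1}_{m,n}$ does not act as zero on $D(\la)$, so $D(\la)$ is not an $\Alg_{m,n}$-module.

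For (i), I would first invoke Conjecture~1~\eqref{conj_1} to transport the problem to the classical walled Brauer algebra $\wb_{m,n}(r)$ with $r=N-M=-1$: the abelian-category equivalence identifies the simple module $D(\la)$ over $\qwb_{m,n}$ with special parameters with the corresponding simple module over $\wb_{m,n}(-1)$. Then I would apply the results of Stroppel et al.\ \cite{Stroppel12} together with the hook/cross-bipartition combinatorics of \cite{CW11}: there it is established that the kernel of the analogous classical map $\wb_{m,n}(N-M)\to\Alg^{M,N}_{m,n}$ annihilates precisely the simple modules labelled by $(M,N)$-cross bipartitions. Specializing $M=2$, $N=1$ gives exactly the condition $\la\in\Cr_{m,n}$, i.e. $\la^L$ and $\la^R$ are hook partitions of types $(p_1,q_1)$, $(p_2,q_2)$ with $p_1+p_2\le 2$, $q_1+q_2\le 1$. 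Concretely this means $D(\la)$ descends to an $\Alg_{m,n}$-module iff $\la$ fits inside a ``$(2,1)$-hook shape'' in the appropriate bipartition sense, which matches the atypical-block structure of $\qSL{2|1}$ (two bosonic plus one fermionic coordinate).

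For (ii), the argument is a counting/completeness argument rather than a module-by-module verification. I would use that $\Alg_{m,n}$, being a finite-dimensional algebra, has as many isomorphism classes of simple modules as the number of blocks-worth of primitive idempotents, and that this count is detected by the decomposition of the bimodule $\Chain_{m,n}$: by the iterative application of Theorem~\bref{tp-decomp} every $\qSL{2|1}$-indecomposable summand of $\Chain_{m,n}$ has a well-defined multiplicity space which is an $\Alg_{m,n}$-module, and by double-centralizer/Schur--Weyl considerations (with $\Alg_{m,n}=\mathrm{End}_{\qSL{2|1}}(\Chain_{m,n})$) these multiplicity spaces exhaust the simple $\Alg_{m,n}$-modules. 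It then remains to match this list with $\{D(\la):\la\in\Cr_{m,n}\}$: the set of atypical/typical $\qSL{2|1}$-modules appearing in $\Chain_{m,n}$ is in bijection with $(2,1)$-hook data, hence with $\Cr_{m,n}$, and a cardinality comparison forces equality and rules out all non-cross $\la$. The main obstacle I expect is precisely this last matching step: establishing rigorously that the labelling of $\qSL{2|1}$-constituents of $\Chain_{m,n}$ by their highest-weight data $(s,r)$ (and their projective covers) corresponds bijectively and blockwise to the cross-bipartition combinatorics of $\Cr_{m,n}$, so that no simple $\Alg_{m,n}$-module is missed and none is spuriously added. This is where the dependence on Conjecture~1 and on the external inputs \cite{Stroppel12,CW11} is genuinely load-bearing, and where one must be careful that the classical-to-quantum comparison preserves the combinatorial labels.
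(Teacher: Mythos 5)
Your part (i) is essentially the paper's argument: the paper gives no proof at all here, stating only that the proposition follows "assuming Conjecture~1 and applying the statements from \cite{Stroppel12}, \cite{CW11} for $M=2$, $N=1$." You fill in the transport-to-classical step (Conjecture~1 takes you to $\wb_{m,n}(-1)$) and then invoke the cross-bipartition characterization of the kernel from Brundan--Stroppel and Comes--Wilson; that is exactly the intent of the paper's one-line justification.

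Where you diverge is part (ii). The paper does not run a separate completeness argument at all; the same references \cite{Stroppel12}, \cite{CW11} already classify the simple modules of the image algebra in the classical setting as precisely those indexed by $(M,N)$-cross bipartitions, so once Conjecture~1 is assumed, both halves of the proposition fall out of the same citation. Your alternative route --- use that $\Alg_{m,n}=\mathrm{End}_{\qSL{2|1}}(\Chain_{m,n})$ and that simple modules of an endomorphism algebra biject with isomorphism types of indecomposable direct summands of the underlying module, then count summands via iterated application of Theorem~\bref{tp-decomp} --- is sound in principle and is more self-contained in spirit. But the step you yourself flag as the main obstacle (establishing the bijection between $\qSL{2|1}$-indecomposable summand types and $(2,1)$-cross bipartitions, respecting blocks and atypicality) is genuinely substantial: it is not a formal consequence of Theorem~\bref{tp-decomp}, and carrying it out would effectively reproduce the combinatorial content of the very results you are trying to avoid citing. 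In practice the paper's route is both shorter and safer; your route buys some independence from the external classification at the cost of a nontrivial combinatorial matching that this paper never attempts. Neither approach is wrong, but be aware that the paper treats completeness as already supplied by \cite{Stroppel12}, \cite{CW11}, not as something to be re-derived from Schur--Weyl duality on the chain.
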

\begin{prop}
Each $\Alg_{m,n}$-simple module $D(\la), \, \la\in\Cr_{m,n}$ occurs as a subquotient in the bimodule decomposition of $\Chain_{m,n}$.
\end{prop}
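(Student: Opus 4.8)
\emph{Proof strategy.} The plan is to deduce the statement from the single fact that $\Chain_{m,n}$ is a faithful $\Alg_{m,n}$-module, together with the classification of simple $\Alg_{m,n}$-modules obtained in Proposition~\bref{crossbip}.

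First I would recall the elementary fact that a finite-dimensional faithful module over a finite-dimensional algebra has every simple module among its composition factors. This applies here because, by its very definition, $\Alg_{m,n}=\mathrm{End}_{\qSL{2|1}}(\Chain_{m,n})$ is a subalgebra of $\mathrm{End}_{\oC}(\Chain_{m,n})$, so the action map $\Alg_{m,n}\hookrightarrow\mathrm{End}_{\oC}(\Chain_{m,n})$ is injective. This map is a morphism of left $\Alg_{m,n}$-modules if $\Alg_{m,n}$ acts on itself by left multiplication and on $\mathrm{End}_{\oC}(\Chain_{m,n})$ by post-composition; and for the latter action $\mathrm{End}_{\oC}(\Chain_{m,n})\cong\Chain_{m,n}^{\oplus\dim\Chain_{m,n}}$ as a left $\Alg_{m,n}$-module. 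Thus the left regular $\Alg_{m,n}$-module embeds into a finite direct sum of copies of $\Chain_{m,n}$, and since every simple $\Alg_{m,n}$-module is a composition factor of the left regular module (it appears already in the semisimple quotient $\Alg_{m,n}/\mathrm{rad}\,\Alg_{m,n}$), every simple $\Alg_{m,n}$-module is a composition factor of $\Chain_{m,n}$.

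Second I would combine this with Proposition~\bref{crossbip}, by which the modules $D(\la)$, $\la\in\Cr_{m,n}$, exhaust the simple $\Alg_{m,n}$-modules; hence each $D(\la)$ with $\la\in\Cr_{m,n}$ is a composition factor of $\Chain_{m,n}$ regarded as a left $\Alg_{m,n}$-module. It then remains only to transfer this to the bimodule picture: decomposing $\Chain_{m,n}$ into indecomposable $\Alg_{m,n}\boxtimes\qSL{2|1}$-bimodules, the simple bimodule subquotients have the form $D(\la)\boxtimes X$ with $X$ a simple $\qSL{2|1}$-module, and restricting to $\Alg_{m,n}$ shows that the set of $\la$ occurring this way is exactly the set of $\Alg_{m,n}$-composition factors of $\Chain_{m,n}$. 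Therefore every $D(\la)$, $\la\in\Cr_{m,n}$, occurs as a subquotient of some indecomposable bimodule summand, which is the assertion.

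I expect no genuine obstacle: the only point requiring attention is bookkeeping the two commuting actions, i.e.\ checking that a subquotient in the bimodule decomposition is the same data as an $\Alg_{m,n}$-composition factor of $\Chain_{m,n}$, which is immediate. I would only remark that this proposition is much softer than the bimodule decomposition itself (Theorem~\bref{bimodulethm}): pinning down in which indecomposable bimodule each $D(\la)$ actually sits, or upgrading ``$D(\la)$ occurs as a subquotient'' to ``the projective cover $K(\la)$ occurs'', requires running the iterated decomposition of $\Chain_{m,n}$ via Theorem~\bref{tp-decomp} and matching multiplicities through the Brauer--Humphreys reciprocity~\eqref{B-H_rec}, and is not delivered by the soft faithfulness argument alone.
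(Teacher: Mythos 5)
Your proof is correct. The paper states this proposition without any proof, so there is no argument of the authors' to compare against, but your reasoning is the natural one and surely what they intended: since $\Alg_{m,n}$ is \emph{defined} as the centralizer $\mathrm{End}_{\qSL{2|1}}(\Chain_{m,n})\subset\mathrm{End}_{\oC}(\Chain_{m,n})$, the action on $\Chain_{m,n}$ is tautologically faithful, the left regular module therefore embeds in $\Chain_{m,n}^{\oplus\dim\Chain_{m,n}}$, and hence every simple $\Alg_{m,n}$-module is an $\Alg_{m,n}$-composition factor of $\Chain_{m,n}$; Proposition~\bref{crossbip} identifies those simples with $\{D(\la):\la\in\Cr_{m,n}\}$, and the passage to bimodule subquotients $D(\la)\boxtimes X$ is as you say immediate. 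Your closing caveat is also apt: this soft argument says nothing about multiplicities, nor about which indecomposable summand a given $D(\la)$ sits in, nor whether the full projective cover $K(\la)$ appears — that finer information is exactly what Theorem~\bref{bimodulethm} supplies by explicit iteration of Theorem~\bref{tp-decomp} together with the restriction data of Section~\bref{sec:modules}.
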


In the following we use notation $a=|m-n|$. For bipartitions from $\Cr_{m,n}$ we introduce the notation
\begin{description}\addtolength{\itemsep}{6pt}
  \item[for $m \geq n$] 
    \begin{align*}
  \Da{a}{s} &= ((a,1^s),(s)),  \quad a > 0, \quad  0 \leq s \leq n,\\
  \Db{a}{s} &= ((a,s),(1^s) ), \quad a > 0, \quad  1 \leq s \leq \min(a,n), \\
  \Dc{a}{s} &= ((s+1,a+1),(1^{s+2}) ), \quad a \leq s \leq n-2, \quad a \geq 0,
    \end{align*}
  \item[for $m \leq n$]
    \begin{align*}
  \Daa{a}{s} &= ((s), (a,1^s) ),  \quad a > 0, \quad  0 \leq s \leq m,\\
  \Da{0}{0} &=  \Daa{0}{0} =(\emp, \emp), \\
  \Dbb{a}{s} &= ((1^s), (a,s) ), \quad a > 0, \quad  1 \leq s \leq \min(a,m), \\
  \Dcc{a}{s} &= ((1^{s+2}), (s+1,a+1) ), \quad a \leq s \leq m-2, \quad a \geq 0.
    \end{align*}
\end{description}
We note that $\Db{a}{1}=\Da{a}{1}$ and $\Dcc{0}{0}=\Dc{0}{0}$.

For given $m$, $n$ we define a subset $\at_{m,n}$ of bipartitions in $\Cr_{m,n}$ as
\begin{equation}
\at_{m,n} = \begin{cases}
\{\Da{a}{s} | 0 \leq s \leq n\} \,\bigcup\,
		\{\Db{a}{s} | 2 \leq s \leq \min(a,n) \} \,\bigcup\,
		\{\Dc{a}{s} | a \leq s \leq n-2 \}, \quad m>n, \\
\{ \Da{0}{0} \} \,\bigcup\,
		\{\Dc{0}{s} | 0 \leq s \leq n-2 \} \,\bigcup\,
		\{\Dcc{0}{s} | 1 \leq s \leq n-2 \}, \quad m=n, \\
\{\Daa{a}{s} | 0 \leq s \leq m\} \,\bigcup\,
		\{\Dbb{a}{s} | 2 \leq s \leq \min(a,m) \} \,\bigcup\,
		\{\Dcc{a}{s} | a \leq s \leq m-2 \}, \quad m<n. 
\end{cases}		
\end{equation}
We call these bipartitions atypical. If $\la \in \at_{m,n}$ we call corresponding modules $S(\la)$ and $D(\la)$ atypical also.

We define the operation $\operS$ from the set of $\qwb_{m,n}$-modules to the
set of $\qwb_{n,m}$-modules. The operation $\operS$ acts on the simple $\qwb_{m,n}$-module by the formula
\begin{equation}\label{opS1}
\operS \Bigl( D\big[\la^L,\la^R\big] \Bigr) = D\big[\la^R,\la^L\big], 
\end{equation}
i.e. it changes left and right partitions in a bipartition.
We note that $\operS \Da{a}{s} = \Daa{a}{s}$, and similarly for $\Db{a}{s}$, $\Dc{a}{s}$.
When applied to projective modules, the operation $\operS$ acts on each
simple subquotient by the formula (\ref{opS1}) and does not change the
structure of the Loewy graph. 
It is obvious that
\begin{equation}\label{S_onK}
K\big[\la^R,\la^L\big]  = \operS \Bigl( K\big[\la^L,\la^R\big] \Bigr).
\end{equation}

The action of the algebra $\Alg_{m,n}$ on an arbitrary $\qwb_{m,n}$-module is not defined in general. 
In particular, it is not defined on some $\qwb_{m,n}$-Specht modules, that contain
$D(\la'), \,\, {\la' \notin \Cr_{m,n}}$ as a subquotient. 
For $\la\in\Cr_{m,n}$ we define a Specht module over $\Alg_{m,n}$ 
(abusing notation we use the same symbol $S(\la)$ for it)
as a factor of corresponding $\qwb_{m,n}$-Specht module $S(\la)$ over all suquotients $D(\la')$ with
$\la' \notin \Cr_{m,n}$.  

Similarly we let $K(\la)$ denote the projective cover for $\Alg_{m,n}$-module $S(\la)$. This projective cover is a subquotient of 
$\qwb_{m,n}$ projective module $K(\la)$.

Assuming the Conjecture~2 (\bref{conj_2}), we have the equality of multiplicities $\tilde{d}_{\la,\mu}=d_{\la,\mu}$ for $\Alg_{m,n}$ in analogy with \eqref{B-H_rec}. Using \cite{Cox2010} and Proposition \bref{crossbip}, we have the following Theorem.
We write down the structure of the Loewy graphs for $\Alg_{m,n}$-projective modules 
(analogously to the formulas \ref{proj1}--\ref{proj3} for $\qSL{2|1}$-projective modules). They are oriented graphs where arrows mean the action of the algebra $\Alg_{m,n}$. States from the subquotient at the beginning of an arrow are mapped to the states in the subquotient at the end of an arrow and (possibly) in the subquotients further the arrows. Investigation of $\bExti$ spaces for the algebra $\Alg_{m,n}$ and the detailed action of all $\Alg_{m,n}$-generators on projective modules are beyond the scope of this paper.  

\begin{thm}\label{K_structure}
For $\la \in \Cr_{m,n}$, $\la \notin \at_{m,n}$, the projective module over 
$\Alg_{m,n}$ coincides with the simple module: 
$K(\la)=D(\la)$. For $\la \in \at_{m,n}$, we have the following structure of projective modules over $\Alg_{m,n}$
\begin{description}\addtolength{\itemsep}{6pt}
  \item[for $m > n$]
\begin{align*}  
&\xymatrix@=20pt{
    && {D(\Da{a}{s})} \ar[dl]\ar[dr]
    &&\\
    K(\Da{a}{s}) =  \hspace{-0.7cm}
    &  D(\Da{a}{s-1}) \ar[dr]
    && D(\Da{a}{s+1}) \ar[dl] \text{\, ,}
    & \qquad 2 \leq s \leq n-1, \quad a \geq 1,
    \\
    && D(\Da{a}{s})
    &&
  } 
\\
&\xymatrix@=25pt{
    && D(\Da{a}{1})\ar[dl]\ar[d]_{}\ar[dr]
    &
    \\
	K(\Da{a}{1}) = \hspace{-0.7cm}
    & D(\Da{a}{2}) \ar[dr]
    & D(\Da{a}{0}) \ar[d]_{}
    & D(\Db{a}{2}) \ar[dl]  \text{\, ,}
    & \qquad  a \geq 2, \quad n \geq 2, 
    \\
    && D(\Da{a}{1})
    &&
  }
\\  
&\xymatrix@=25pt{
    && D(\Da{1}{1})\ar[dl]\ar[d]_{}\ar[dr]
    &
    \\
	K(\Da{1}{1}) = \hspace{-0.7cm}
    & D(\Da{1}{2}) \ar[dr]
    & D(\Da{1}{0}) \ar[d]_{}
    & D(\Dc{1}{1}) \ar[dl] \text{\, ,}
    & \qquad  n \geq 3,
    \\
    && D(\Da{a}{1})
    &&
  }
\\ 
&\xymatrix@=25pt{
    & D(\Da{a}{n})\ar[d]_{} &
    \\
    K(\Da{a}{n}) = \hspace{-0.7cm}
    &  D(\Da{a}{n-1}) \,\, ,
    \ar[d]_{}
    &  \qquad  a \geq 1, \quad n \geq 1,
    \\
    & D(\Da{a}{n})
    &
  }
\\
&\xymatrix@=25pt{
    K(\Da{a}{0}) = \hspace{-0.7cm}
    &  D(\Da{a}{0}) \,\, ,
    \ar[d]_{}
    & \qquad  a \geq 1, \quad n \geq 1,
    \\
    & D(\Da{a}{1})
    &
  }
\\
&\xymatrix@=20pt{
    && {D(\Db{a}{s})} \ar[dl]\ar[dr]
    &&\\
    K(\Db{a}{s}) =  \hspace{-0.7cm}
    &  D(\Db{a}{s-1}) \ar[dr]
    && D(\Db{a}{s+1}) \,\, , \ar[dl]
    &   \qquad 2 \leq s \leq \min(a,n)-1 \,\, ,
    \\
    && D(\Db{a}{s})
    &&
  }
\\
&\xymatrix@=20pt{
    && {D(\Db{a}{a})} \ar[dl]\ar[dr]
    &&\\
    K(\Db{a}{a}) =  \hspace{-0.7cm}
    &  D(\Db{a}{a-1}) \ar[dr]
    && D(\Dc{a}{a}) \,\, , \ar[dl]
    &   \qquad 2 \leq a \leq n-2 \,\, ,
    \\
    && D(\Db{a}{a})
    &&
  }
\\
&\xymatrix@=25pt{
    & D(\Db{a}{a})\ar[d]_{}
    &
    \\
    K(\Db{a}{a}) = \hspace{-0.7cm}
    &  D(\Db{a}{a-1}) \,\, , \ar[d]_{}
	&  \qquad a=n-1, \quad n \geq 3,   
    \\
    & D(\Db{a}{a})
    &
  }
\\
&\xymatrix@=25pt{
    & D(\Db{a}{n})\ar[d]_{}
    &
    \\
    K(\Db{a}{n}) = \hspace{-0.7cm}
    &  D(\Db{a}{n-1}) \,\, , \ar[d]_{}
	&  \qquad a \geq n, \quad n \geq 2,
    \\
    & D(\Db{a}{n})
    &
  }
\\
&\xymatrix@=20pt{
    && {D(\Dc{a}{s})} \ar[dl]\ar[dr]
    &&\\
    K(\Dc{a}{s}) =  \hspace{-0.7cm}
    &  D(\Dc{a}{s-1}) \ar[dr]
    && D(\Dc{a}{s+1}) \,\, , \ar[dl]
    &   \qquad a+1 \leq s \leq n-3, \quad a \geq 1,
    \\
    && D(\Dc{a}{s})
    &&
  }
\\
&\xymatrix@=20pt{
    && {D(\Dc{a}{a})} \ar[dl]\ar[dr]
    &&\\
    K(\Dc{a}{a}) =  \hspace{-0.7cm}
    &  D(\Db{a}{a}) \ar[dr]
    && D(\Dc{a}{a+1}) \,\, , \ar[dl]
    &  \qquad 2 \leq a \leq n-3,
    \\
    && D(\Dc{a}{a})
    &&
  }
\\
&\xymatrix@=20pt{
    && {D(\Dc{1}{1})} \ar[dl]\ar[dr]
    &&\\
    K(\Dc{1}{1}) =  \hspace{-0.7cm}
    &  D(\Da{1}{1}) \ar[dr]
    && D(\Dc{1}{2}) \,\, , \ar[dl]
    &  \qquad n \geq 4,
    \\
    && D(\Dc{1}{1})
    &&
  }
\\
&\xymatrix@=25pt{
    & D(\Dc{a}{n-2})\ar[d]_{}
    &
    \\
    K(\Dc{a}{n-2}) = \hspace{-0.7cm}
    &  D(\Dc{a}{n-3}) \,\, , \ar[d]_{}
    & \qquad 1 \leq a \leq n-3, 
    \\
    & D(\Dc{a}{n-2})
    &
  }
\\
&\xymatrix@=25pt{
    & D(\Dc{n-2}{n-2})\ar[d]_{}
    &
    \\
    K(\Dc{n-2}{n-2}) = \hspace{-0.7cm}
    &  D(\Db{n-2}{n-2}) \,\, , \ar[d]_{}
    & \qquad n \geq 3. 
    \\
    & D(\Dc{n-2}{n-2})
    &
  }
\end{align*} 
  \item[for $m = n$]
\begin{align*}
&\xymatrix@=25pt{
    K(\Da{0}{0}) = \hspace{-0.7cm}
    &  D(\Da{0}{0}) \,\, ,
    \ar[d]_{}
    & \qquad n\geq 2,    
    \\
    & D(\Dc{0}{0})
    &
  }
\\
&\xymatrix@=25pt{
    && D(\Dc{0}{0})\ar[dl]\ar[d]_{}\ar[dr]
    &&\\
	K(\Dc{0}{0}) = \hspace{-0.7cm}
    & D(\Dcc{0}{1}) \ar[dr]
    & D(\Da{0}{0}) \ar[d]_{}
    & D(\Dc{0}{1}) \,\, , \ar[dl]
    &  \qquad n \geq 3,
    \\
    && D(\Dc{0}{0})
    &&
  }
\\ 
&\xymatrix@=20pt{
    && {D(\Dc{0}{s})} \ar[dl]\ar[dr]
    &&\\
    K(\Dc{0}{s}) =  \hspace{-0.7cm}
    &  D(\Dc{0}{s-1}) \ar[dr]
    && D(\Dc{0}{s+1}) \,\, , \ar[dl]
    &   \qquad 1 \leq s \leq n-3, 
    \\
    && D(\Dc{0}{s})
    &&
  }
\\
&\xymatrix@=25pt{
    & D(\Dc{0}{n-2})\ar[d]_{}
    &
    \\
    K(\Dc{0}{n-2}) = \hspace{-0.7cm}
    &  D(\Dc{0}{n-3}) \,\, , \ar[d]_{}
    & \qquad n \geq 3 , 
    \\
    & D(\Dc{a}{n-2})
    &
  }
\\
\end{align*}
\end{description}
Structure of projective modules $K(\Dcc{0}{s}), 0 \leq s \leq n-2$ for $m=n$ and all projective modules for $m<n$
can be obtained from this using the formula (\ref{S_onK}).
\end{thm}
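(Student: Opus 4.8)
The plan is to pin down the projective $\Alg_{m,n}$-modules from three ingredients: (i) the quasihereditary structure of $\Alg_{m,n}$ (Conjecture~2, \bref{conj_2}); (ii) the decomposition matrix of the classical walled Brauer algebra computed in \cite{Cox2010}, imported via Conjecture~1 (\bref{conj_1}) and then restricted to the $(2,1)$-cross vertices as in Proposition~\bref{crossbip}; and (iii) self-duality. First I would reduce: by \eqref{S_onK} it suffices to treat $m\geq n$, since the $m<n$ projectives together with $K(\Dcc{0}{s})$ for $m=n$ are the $\operS$-images of those already listed. So fix $m\geq n$. By Conjecture~2 each projective $K(\la)$, $\la\in\Cr_{m,n}$, carries a filtration by $\Alg_{m,n}$-Specht modules and the Brauer--Humphreys reciprocity $\tilde d_{\la\mu}=d_{\la\mu}$ holds; hence the multiset of composition factors of $K(\la)$ is determined once we know the filtration multiplicities $d_{\la\mu}=[S(\mu):D(\la)]$ and the composition series of each $S(\mu)$.

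\textbf{Step 1: composition series of the $\Alg_{m,n}$-Specht modules.} By Conjecture~1 the decomposition numbers of $\qwb_{m,n}$ at the special parameters \eqref{NichRules} coincide with those of the classical walled Brauer algebra, given by the formulas of \cite{Cox2010}. By construction the $\Alg_{m,n}$-Specht module $S(\la)$ is the quotient of the $\qwb_{m,n}$-Specht module by its largest submodule all of whose composition factors $D(\la')$ have $\la'\notin\Cr_{m,n}$, so $[S(\la):D(\mu)]_{\Alg_{m,n}}$ equals the walled-Brauer value for $\mu\in\Cr_{m,n}$ and vanishes otherwise. Specialising those formulas to the three families $\Da{a}{s}$, $\Db{a}{s}$, $\Dc{a}{s}$ (and their $\operS$-images), I would check that $S(\la)$ is simple exactly for $\la\in\Cr_{m,n}\setminus\at_{m,n}$, whereas for $\la\in\at_{m,n}$ the module $S(\la)$ has length at most $2$, the extra composition factor being the unique ``lower neighbour'' of $\la$ in the combinatorial list (the boundary cases, such as $S(\Da{a}{n})$, being simple). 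Matching these neighbour relations against the decomposition matrix is the combinatorial heart of the argument.

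\textbf{Step 2: assembling the Loewy graphs.} For $\la\in\Cr_{m,n}\setminus\at_{m,n}$, Step~1 gives $d_{\la\mu}=\delta_{\la\mu}$, hence $K(\la)=S(\la)=D(\la)$. For $\la\in\at_{m,n}$, reciprocity gives $[K(\la):S(\la)]=1$ and $[K(\la):S(\mu)]=1$ for exactly the one, two, or three bipartitions $\mu>\la$ adjacent to $\la$, all other multiplicities being zero; splicing these (length $\leq2$) Specht modules yields a module with at most five composition factors, $D(\la)$ occurring once or twice. The anti-involution of $\qwb_{m,n}$ descends to $\Alg_{m,n}$, so each $K(\la)$ is self-dual; together with quasiheredity this forces $\mathrm{top}\,K(\la)=\mathrm{soc}\,K(\la)=D(\la)$ and a Loewy graph symmetric under reversal. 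Since every decomposition number in sight equals $1$, no freedom remains in placing the intermediate factors, and the radical layers come out precisely as the diamond (or string) pictures in the statement.

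\textbf{The main obstacle.} The delicate point is the last claim of Step~2: that the intermediate factors lie in a \emph{single} semisimple radical layer, equivalently that $\Exti_{\Alg_{m,n}}(D(\la),D(\mu))$ is one-dimensional exactly for the adjacent pairs and zero otherwise. Since the paper does not compute these $\Exti$ groups directly, I would obtain them by transport: the blocks of the classical walled Brauer algebra at the relevant integer parameter are Morita equivalent to zig-zag (type-$A$) algebras, whose $\Exti$-quiver is known \cite{CDM2007,Cox2010}; the exact quotient functor attached to $\Psi^{2,1}_{m,n}$ annihilates exactly the non-cross simples and is fully faithful on the subcategory generated by the $D(\la)$, $\la\in\Cr_{m,n}$, so the $\Exti$-quiver of $\Alg_{m,n}$ is the induced full subquiver on the cross vertices. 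For such a directed, multiplicity-free quasihereditary algebra the projective cover of a vertex is exactly its ``radius-one ball'', i.e.\ the diamond drawn. A more hands-on alternative --- realising each $K(\la)$ as the multiplicity space of an indecomposable $\qSL{2|1}$-summand of $\Chain_{m,n}$ and reading its submodule lattice off Theorem~\bref{tp-decomp} together with the projective covers \eqref{proj1}--\eqref{proj3} --- is available, but heavier, and gives the same answer.
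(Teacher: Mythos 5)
Your proposal follows the same route the paper itself uses to justify this theorem: Conjecture~2 gives the Brauer--Humphreys reciprocity $\tilde d_{\lambda\mu}=d_{\lambda\mu}$, the decomposition numbers are taken from Cox--De~Visscher via Conjecture~1, Proposition~\bref{crossbip} cuts everything down to the cross bipartitions, and \eqref{S_onK} reduces to $m\geq n$. The paper presents exactly this as its justification and then explicitly states that the investigation of $\Exti$ spaces for $\Alg_{m,n}$ is ``beyond the scope of this paper,'' so your ``main obstacle'' paragraph is a genuine and welcome attempt to close a gap the authors acknowledge rather than prove. Two small cautions. First, the parenthetical ``(length $\leq2$)'' is not uniformly correct: boundary cases such as $K(\Da{a}{1})$ for $a\geq2$ in the statement have three middle factors and hence five composition factors, which forces either more than two Specht layers or a Specht subquotient of length $\geq3$, contrary to the parenthetical (the conclusion ``at most five, $D(\lambda)$ once or twice'' remains true, but the mechanism you cite does not deliver it in those cases without extra work). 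Second, when transporting the $\Exti$-quiver along the surjection $\Psi^{2,1}_{m,n}$, inflation from $\Alg_{m,n}$-mod to $\qwb_{m,n}$-mod gives an \emph{injection} $\Exti_{\Alg_{m,n}}(D(\lambda),D(\mu))\hookrightarrow\Exti_{\qwb_{m,n}}(D(\lambda),D(\mu))$ but not a priori an isomorphism; for your argument this inclusion actually points the right way (it bounds the target quiver from above, ruling out spurious arrows, while the length-two Specht modules themselves supply the needed nonzero extensions), but this should be stated, since ``fully faithful on the abelian subcategory'' does not by itself yield the $\Exti$ identification.
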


\subsection{The restriction functors}
\subsubsection{}\label{Res_definition}
There are two natural embeddings between quantum walled Brauer
algebras (see \cite{CDM2007})
\begin{equation}
 \qwb_{m-1,n} \to \qwb_{m,n}, \qquad
 \qwb_{m,n-1} \to \qwb_{m,n}.
\end{equation}
The first embedding acts by identification of the
corresponding generators 
$\EE$, $g_1, g_2, \dots g_{m-2}$, $h_1, h_2, \dots h_{n-1}$. 
The second embedding acts by identification of the generators 
$\EE$, $g_1, g_2, \dots g_{m-1}$, $h_1, h_2, \dots h_{n-2}$. 
These two maps induce two
restriction functors 
$\res^{m,n}_{m-1,n}$ and $\res^{m,n}_{m,n-1}$
from the category of $\qwb_{m,n}$-modules to the
categories of $\qwb_{m-1,n}$ and $\qwb_{m,n-1}$-modules respectively. 

Let $\add(\mu)$ be the set of boxes for a partition $\mu$, which can be added singly to $\mu$ such that 
the result $\mu+\Box$ is a partition.
Let $\remov (\mu)$ be a set of boxes which can be 
removed from $\mu$ such that $\mu/\Box$ is a partition.

In what follows the sign $\biguplus$ denotes the non-direct sum of modules.
Following \cite{CDM2007}, where the classical case $\q=1$ is considered, we have for modules over $\qwb_{m,n}$ 
\begin{prop}\label{restr_S_qwb}
For $\la \in \Lambda_{m,n}(f)$ with $n\geq 1$ we have
\begin{align*}
\resN\, S(\la) =& 
 \biguplus_{\Box \in \remov (\la^R)}  S( \la^L, \la^R-\Box ), \quad \text{for } f=0, \\
\resN\, S(\la) =& 
 \biguplus_{\Box \,\in \add (\la^L)}  S( \la^L +\Box, \la^R) \biguplus
 \biguplus_{\Box \,\in \remov (\la^R)}  S( \la^L, \la^R-\Box ),
 \quad \text{for } f>0.  
\end{align*}
\end{prop}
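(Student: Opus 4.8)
The plan is to transport the known classical branching rules for the walled Brauer algebra $\wb_{m,n}$ (from \cite{CDM2007}) to the quantum algebra $\qwb_{m,n}$ by way of the category equivalence of Conjecture~1~\eqref{conj_1}. First I would record the classical statement: in \cite{CDM2007} the restriction $\res^{m,n}_{m,n-1}$ of a cell (Specht) module $S(\la)$ with $\la\in\Lambda_{m,n}(f)$ is described combinatorially, the $f>0$ case producing a non-direct sum indexed by $\add(\la^L)$ (new boxes on the left partition) together with $\remov(\la^R)$ (removed boxes on the right partition), and the $f=0$ case producing only the $\remov(\la^R)$ terms, since when $f=0$ one has $|\la^R|=n$ and $|\la^L|=m$, so adding a box to $\la^L$ would leave the set $\Lambda_{m,n-1}$ and is disallowed. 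The combinatorial index sets here are literally the same ones appearing in the classical $\wb_{m,n}\hookrightarrow\wb_{m,n-1}$ restriction, so once the equivalence is in place there is nothing further to compute on the combinatorial side.

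Second, I would check that the equivalence is compatible with the restriction functors, not merely with the abelian structure. The embedding $\qwb_{m,n-1}\hookrightarrow\qwb_{m,n}$ of \bref{Res_definition} identifies generators $\EE, g_1,\dots,g_{m-1}, h_1,\dots,h_{n-2}$ in exactly the same way as the classical embedding $\wb_{m,n-1}\hookrightarrow\wb_{m,n}$, and both embeddings respect the cellular/quasihereditary towers. Thus $\res^{m,n}_{m,n-1}$ on $\qwb_{m,n}$-modules corresponds under the equivalence to the classical $\res^{m,n}_{m,n-1}$ on $\wb_{m,n}$-modules. The Specht module $S(\la)$ over $\qwb_{m,n}$ corresponds to the classical Specht module $S(\la)$ under the equivalence (both are the cell modules attached to the same poset element $\la\in\Lambda_{m,n}$), and likewise for the $\qwb_{m,n-1}$-side. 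Therefore the restriction formula, which in the classical case is an identity of objects in the Grothendieck group refined by the filtration recorded by $\biguplus$, transports verbatim.

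Third, I would be careful about what "$\biguplus$" means and verify it is preserved. In the classical case the restricted module is \emph{not} a direct sum: it carries a filtration by Specht modules with the listed subquotients, and the order of the filtration is dictated by the cellular structure (dominance on bipartitions). The category equivalence of Conjecture~1 sends Specht modules to Specht modules and is exact, hence it sends a Specht-filtered module to a Specht-filtered module with the corresponding subquotients in the corresponding order; so the $\biguplus$ statement is preserved as stated. One should also note the normalization subtlety flagged around \eqref{NichRules}: since we work at generic $\ffrac{\delta}{\gamma}$ here (not the special value), the classical/quantum dictionary is the clean semisimple-to-semisimple one of Theorem~\bref{simpleQWBmodules}, where Specht modules are already simple, which makes the transport especially transparent.

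The main obstacle is the reliance on Conjecture~1: without it one does not have the abelian (let alone exact, cellular-structure-preserving) equivalence that makes the transport work, and one would instead have to reprove the $\q$-deformed branching rule by hand. The honest alternative — which I would mention but not carry out — is to work directly with a cellular basis of $\qwb_{m,n}$ adapted to the subalgebra $\qwb_{m,n-1}$ (for instance the diagram/tangle basis of \cite{ST2015}, or the basis of \cite{En}), express $S(\la)$ restricted to $\qwb_{m,n-1}$ in that basis, and identify the Specht subquotients combinatorially; this is the route taken in \cite{CDM2007} for $\q=1$ and it $\q$-deforms without essential change, but it is lengthy. Given the standing assumption of Conjecture~1 in this section, the equivalence argument is the intended one, and the only real content to verify is the compatibility of the equivalence with the two tower embeddings, which is routine because the embeddings are defined by the same generator identifications in both the classical and quantum settings.
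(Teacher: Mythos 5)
Your overall strategy — transporting the classical branching rule of \cite{CDM2007} to $\qwb_{m,n}$ — is exactly what the paper does: the text simply writes ``Following \cite{CDM2007}, where the classical case $\q=1$ is considered, we have for modules over $\qwb_{m,n}$\dots'' and states the proposition with no further argument. Your proposal makes explicit that the dictionary is Conjecture~1 plus compatibility with the tower embeddings, and correctly flags the honest alternative of carrying out the $\q$-deformed cellular computation directly. So in broad outline you match the paper.

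There is, however, one genuine confusion in your third paragraph that should be corrected, because it undercuts the point of the statement. You write that ``since we work at generic $\delta/\gamma$ here (not the special value), the classical/quantum dictionary is the clean semisimple-to-semisimple one of Theorem~\bref{simpleQWBmodules}, where Specht modules are already simple.'' This conflates two distinct genericity conditions. The hypothesis of Theorem~\bref{simpleQWBmodules} (``generic values of the $\qwB_{m,n}$ parameters'') is that \emph{all} of $\gamma,\delta,\theta$ are generic, in particular that $\theta\neq\delta/\gamma$; in that regime every Specht module is simple and $\biguplus$ collapses to $\oplus$, so the proposition would be contentless. By contrast, the ``generic $\delta/\gamma$'' of Conjecture~1 only asks that $\q$ not be a root of unity, and is entirely compatible with the degenerate relation $\theta=\delta/\gamma$ of~\eqref{NichRules}, which is the regime the paper actually cares about and under which the Specht modules become reducible. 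The paper states explicitly right after the proposition that it holds ``for the algebra $\qwb_{m,n}$ with either generic or special parameters,'' and the $\biguplus$ records a non-trivial Specht filtration precisely in the special case. So the transport you describe must be carried out through the non-semisimple equivalence of Conjecture~1, not through the trivial semisimple dictionary; dropping the erroneous sentence and replacing it with the observation that Conjecture~1 applies at $\theta=\delta/\gamma$ (since only $\q$ must be generic) fixes the gap. One further caveat worth retaining from your own discussion: Conjecture~1, as stated, asserts only an abelian equivalence, and the assertion that it is automatically compatible with the two tower embeddings (and hence with $\res^{m,n}_{m,n-1}$) is an additional assumption rather than something ``routine''; the paper does not address this either, so this is a shared weakness rather than a defect unique to your argument.
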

This statement is valid for the algebra $\qwb_{m,n}$ with either generic or special parameters. 
For $\qwb_{m,n}$ with generic parameters all $\biguplus$ become direct sums.

As a consequence of the previous statement and Proposition \bref{crossbip} 
we have for modules over $\Alg_{m,n}$ 
\begin{prop}\label{restr_S_A}
For $\la \in \Lambda_{m,n}(f) \bigcap \Cr_{m,n}$ with $n\geq 1$ we have
\begin{align*}
  \resN\, S(\la) =& 
 \sum_{\Box \in \remov (\la^R)}  S( \la^L, \la^R-\Box ), \quad \text{for } f=0,  \\
    \resN\, S(\la) =& 
 \biguplus_{\Box \,\in \add (\la^L),  (\la^L +\Box, \la^R) \in \Cr_{m,n-1} }  
 S( \la^L +\Box, \la^R) \biguplus
 \biguplus_{\Box \,\in \remov (\la^R)}  S( \la^L, \la^R-\Box ), \quad \text{for } f>0. 
\end{align*}
\end{prop}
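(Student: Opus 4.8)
The plan is to derive the statement from Proposition~\ref{restr_S_qwb} by transporting the restriction from $\qwb$-modules to $\Alg_{m,n}$-modules. The first step is to pin down how the embedding $\qwb_{m,n-1}\hookrightarrow\qwb_{m,n}$ interacts with the surjections $\Psi^{2,1}$ onto the centralizers. Writing $\Chain_{m,n}=\Chain_{m,n-1}\tensor\bthree$ with the last $\bthree$ as the extra tensor factor, the assignment $\phi\mapsto\phi\tensor\Id_{\bthree}$ is an algebra homomorphism $\Alg_{m,n-1}\to\Alg_{m,n}$ which on the generators $g_i$, $h_j$, $\EE$ agrees with $\qwb_{m,n-1}\hookrightarrow\qwb_{m,n}\xrightarrow{\Psi^{2,1}_{m,n}}\Alg_{m,n}$; hence $\mathrm{ker}\,\Psi^{2,1}_{m,n-1}$ is carried into $\mathrm{ker}\,\Psi^{2,1}_{m,n}$. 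Thus $\resN$ sends $\Alg_{m,n}$-modules to $\Alg_{m,n-1}$-modules, and for any $\qwb_{m,n}$-module $N$ annihilated by $\mathrm{ker}\,\Psi^{2,1}_{m,n}$ the restriction $\resN N$ is annihilated by $\mathrm{ker}\,\Psi^{2,1}_{m,n-1}$. Applying this to $N=S(\la)_{\Alg_{m,n}}$ (which is an $\Alg_{m,n}$-module and, by the definition recalled before the statement, a quotient of the $\qwb_{m,n}$-Specht module $S(\la)_{\qwb}$), and using exactness of $\resN$, we obtain that $\resN S(\la)_{\Alg}$ is a quotient of $R\eqdef\Alg_{m,n-1}\tensor_{\qwb_{m,n-1}}\resN S(\la)_{\qwb}$, the largest quotient of $\resN S(\la)_{\qwb}$ on which $\mathrm{ker}\,\Psi^{2,1}_{m,n-1}$ acts by zero.

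Next I would feed the $\qwb_{m,n-1}$-Specht filtration of $\resN S(\la)_{\qwb}$ supplied by Proposition~\ref{restr_S_qwb} through the right exact functor $\Alg_{m,n-1}\tensor_{\qwb_{m,n-1}}(-)$: this yields a filtration of $R$ whose subquotients are quotients of the images of the $\qwb_{m,n-1}$-Specht modules $S(\mu)$ appearing in that filtration, $\mu$ ranging over $\{(\la^L+\Box,\la^R):\Box\in\add(\la^L)\}$ and $\{(\la^L,\la^R-\Box):\Box\in\remov(\la^R)\}$. A direct check from the definition of a $(2,1)$-cross bipartition shows that whenever $\la\in\Lambda_{m,n}(f)\cap\Cr_{m,n}$ one has $(\la^L,\la^R-\Box)\in\Lambda_{m,n-1}(f)\cap\Cr_{m,n-1}$ (the left partition is unchanged and $(\la^R-\Box)_{p_2+1}\leq(\la^R)_{p_2+1}$, so the hook conditions and the bounds $p_1+p_2\leq2$, $q_1+q_2\leq1$ persist); hence every remove-box term contributes the $\Alg_{m,n-1}$-Specht module $S(\la^L,\la^R-\Box)$. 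For an add-box term one has $(\la^L+\Box,\la^R)\in\Lambda_{m,n-1}(f-1)$: if it still lies in $\Cr_{m,n-1}$ it likewise contributes $S(\la^L+\Box,\la^R)$; if not, its head $D(\la^L+\Box,\la^R)$ is not a simple $\Alg_{m,n-1}$-module by Proposition~\ref{crossbip}, so the two-sided ideal $\mathrm{ker}\,\Psi^{2,1}_{m,n-1}$ does not annihilate it and $(\mathrm{ker}\,\Psi^{2,1}_{m,n-1})\cdot D(\la^L+\Box,\la^R)=D(\la^L+\Box,\la^R)$; since then $(\mathrm{ker}\,\Psi^{2,1}_{m,n-1})\cdot S(\mu)$ maps onto the simple head of $S(\mu)$ and the radical of $S(\mu)$ is superfluous, $(\mathrm{ker}\,\Psi^{2,1}_{m,n-1})\cdot S(\mu)=S(\mu)$ and the corresponding subquotient of $R$ vanishes. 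Thus exactly the terms displayed in the statement remain.

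The main obstacle is the exactness bookkeeping: the functor $\Alg_{m,n-1}\tensor_{\qwb_{m,n-1}}(-)$ is only right exact, so a priori the surviving $S(\mu)$ could appear only as proper quotients and $R$ could be strictly larger than $\resN S(\la)_{\Alg}$. I would close this with a dimension count. Restriction preserves dimension, so $\dim\resN S(\la)_{\Alg}=\dim S(\la)_{\Alg_{m,n}}$; the surjection $R\to\resN S(\la)_{\Alg}$ gives $\dim R\geq\dim S(\la)_{\Alg_{m,n}}$, while the filtration just described gives $\dim R\leq\sum_{\mu}\dim S(\mu)_{\Alg_{m,n-1}}$ over the surviving $\mu$. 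The decomposition numbers $d_{\la\mu}$ of \cite{Cox2010}, together with Proposition~\ref{crossbip} identifying which simple subquotients survive, give $\dim S(\la)_{\Alg_{m,n}}=\sum_{\mu}\dim S(\mu)_{\Alg_{m,n-1}}$; hence all three quantities coincide, forcing $R\xrightarrow{\sim}\resN S(\la)_{\Alg}$ and every filtration subquotient to be a full Specht module, which assembles into the asserted $\biguplus$. (Alternatively, the needed vanishing of $\mathrm{Tor}_1^{\qwb_{m,n-1}}(\Alg_{m,n-1},S(\mu))$ on Specht-filtered modules follows from the conjectural quasihereditary structure of $\Alg_{m,n}$, Conjecture~\ref{conj_2}, under which $\mathrm{ker}\,\Psi^{2,1}$ is an idempotent ideal.) Finally, for $f=0$ only remove-box terms occur and the corresponding sector of $\qwb_{m,n-1}$ is semisimple for our non-root-of-unity $\q$, so there the filtration is in fact a direct sum, in accordance with the statement.
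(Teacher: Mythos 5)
The paper itself supplies no proof of this Proposition: it is asserted ``as a consequence of'' Propositions~\ref{restr_S_qwb} and~\ref{crossbip}, and your argument is a careful fleshing-out along exactly the lines the paper implicitly intends (pass the $\qwb$-Specht filtration of the restriction through the quotient by $\mathrm{ker}\,\Psi^{2,1}_{m,n-1}$, kill the non-cross Specht layers, keep the rest). The structural steps are sound: the compatibility of the tower embeddings with $\Psi^{2,1}$, the observation that $\remov(\la^R)$ preserves the $(2,1)$-cross condition while $\add(\la^L)$ may not, and the Nakayama argument showing $(\mathrm{ker}\,\Psi^{2,1}_{m,n-1})\cdot S(\mu)=S(\mu)$ when $\mu\notin\Cr_{m,n-1}$ are all correct.

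The one genuinely soft spot is your primary closure of the exactness gap. You assert that the decomposition numbers $d_{\la\mu}$ of Cox--De Visscher together with Proposition~\ref{crossbip} ``give'' the dimension identity $\dim S(\la)_{\Alg_{m,n}}=\sum_{\mu}\dim S(\mu)_{\Alg_{m,n-1}}$ over the surviving $\mu$, but you do not derive it, and it is not a one-line consequence of those inputs: it compares $\sum_{\nu\in\Cr_{m,n}}d_{\la,\nu}\dim D(\nu)$ against a double sum over $\mu$ and $\nu'\in\Cr_{m,n-1}$ involving different walled Brauer algebras. As stated this is a nontrivial combinatorial claim, not a citation. Your parenthetical alternative---that under Conjecture~\ref{conj_2} the ideal $\mathrm{ker}\,\Psi^{2,1}$ is a heredity (hence idempotent) ideal, so $\mathrm{Tor}_1^{\qwb_{m,n-1}}(\Alg_{m,n-1},-)$ vanishes on Specht-filtered modules and every filtration layer survives intact---is the cleaner and more defensible route, and it sits at the same level of rigor as the paper itself, which invokes Conjectures~\ref{conj_1} and~\ref{conj_2} throughout this section. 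I would lead with that argument rather than the dimension count. The final remark that the $f=0$ sector is semisimple (so that $\biguplus$ sharpens to a direct sum) is likewise asserted rather than justified, though the paper's own wavering between $\sum$ and $\biguplus$ suggests it is not insisting on that distinction here.
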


\begin{conj}
Restriction for projective module $K(\la)$ over algebra $\Alg_{m,n}$ is a sum of projective modules. 
\end{conj}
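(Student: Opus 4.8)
The plan is to realise the indecomposable projective $\Alg_{m,n}$-modules inside the mixed tensor product, transport the restriction functor $\resN=\res^{m,n}_{m,n-1}$ to the $\qSL{2|1}$-side, and there recognise it as tensoring with $\three$, which Theorem~\bref{tp-decomp} controls completely. Since $\Alg_{m,n}=\mathrm{End}_{\qSL{2|1}}(\Chain_{m,n})$, its indecomposable projective modules are precisely the spaces $\mathrm{Hom}_{\qSL{2|1}}(W,\Chain_{m,n})$ with $W$ running over the pairwise non-isomorphic indecomposable $\qSL{2|1}$-summands of $\Chain_{m,n}$; under the labelling of Proposition~\bref{crossbip} one has $K(\la)=\mathrm{Hom}_{\qSL{2|1}}(W_{\la},\Chain_{m,n})$ for the summand $W_{\la}$ attached to $\la\in\Cr_{m,n}$, and the subalgebra $\Alg_{m,n-1}\subseteq\Alg_{m,n}$ acts through its action on $\Chain_{m,n}=\Chain_{m,n-1}\tensor\bthree$, the embedding of~\bref{Res_definition} being nothing but ``act on $\Chain_{m,n-1}$, leave the last tensorand alone''. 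Because $\bthree$ is dual to $\three$ (the trivial module $\repZ^{1,1}_{1,0}$ occurs in $\three\tensor\bthree$ by Theorem~\bref{tp-decomp}), the tensor--Hom adjunction yields an isomorphism of $\Alg_{m,n-1}$-modules
\begin{equation*}
\resN\,K(\la)=\mathrm{Hom}_{\qSL{2|1}}\!\bigl(W_{\la},\,\Chain_{m,n-1}\tensor\bthree\bigr)\;\cong\;\mathrm{Hom}_{\qSL{2|1}}\!\bigl(W_{\la}\tensor\three,\,\Chain_{m,n-1}\bigr).
\end{equation*}

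I would then apply Theorem~\bref{tp-decomp} to $W_{\la}\tensor\three$: it splits as a finite direct sum $W_{\la}\tensor\three=\bigoplus_{i}Y_{i}$ of simple $\qSL{2|1}$-modules and their projective covers $\repR$, i.e.\ of indecomposables from the class that is closed under $\tensor\three$ and $\tensor\bthree$. Consequently
\begin{equation*}
\resN\,K(\la)\;\cong\;\bigoplus_{i}\mathrm{Hom}_{\qSL{2|1}}\!\bigl(Y_{i},\,\Chain_{m,n-1}\bigr),
\end{equation*}
and it suffices to prove that each summand $\mathrm{Hom}_{\qSL{2|1}}(Y_{i},\Chain_{m,n-1})$ is a projective $\Alg_{m,n-1}$-module. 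If $Y_{i}$ is itself a direct summand of $\Chain_{m,n-1}$ this is automatic --- the space is then one of the $K(\mu)$, $\mu\in\Cr_{m,n-1}$ --- so the statement reduces to the claim that every $Y_{i}$ which is ``seen'' by $\Chain_{m,n-1}$ (i.e.\ with $\mathrm{Hom}_{\qSL{2|1}}(Y_{i},\Chain_{m,n-1})\neq0$) actually occurs as such a summand. Equivalently (and uniformly in $\la$), it suffices that $\Alg_{m,n}$ be projective as a left $\Alg_{m,n-1}$-module, which would follow from $\Chain_{m,n-1}\tensor\repZ^{1,-1}_{2,1}$ lying in the additive closure of $\Chain_{m,n-1}$ --- $\repZ^{1,-1}_{2,1}$ being the typical summand of $\bthree\tensor\three$ --- since then $\Alg_{m,n}\cong\Alg_{m,n-1}\oplus\mathrm{Hom}_{\qSL{2|1}}(\Chain_{m,n-1}\tensor\repZ^{1,-1}_{2,1},\,\Chain_{m,n-1})$ would be a sum of projective left $\Alg_{m,n-1}$-modules.

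This last reduction is where I expect the genuine obstacle to lie, since a priori $\Chain_{m,n-1}$ could map nontrivially onto a proper subquotient of some projective cover $\repR_{b}$ that is not itself one of its summands. To settle it I would first list, once and for all, the indecomposable $\qSL{2|1}$-summands of $\Chain_{m,n}$ (obtained by iterating Theorem~\bref{tp-decomp}, or read off from the bimodule decomposition), and then combine the explicit tensor rules of Theorem~\bref{tp-decomp} with the elementary remark that, for $Y$ an atypical simple or a projective cover $\repR_{b}$, one has $\mathrm{Hom}_{\qSL{2|1}}(Y,\Chain_{m,n-1})\neq0$ only if $\mathrm{soc}(Y)=\repZ_{b}$ lies in $\mathrm{soc}(\Chain_{m,n-1})$, hence only if $\repZ_{b}$ or $\repR_{b}$ is a summand of $\Chain_{m,n-1}$; in the case $Y=\repR_{b}$ with $\repR_{b}$ a summand the Hom-space is the corresponding $K(\cdot)$, leaving only the case ``$\repZ_{b}$ a summand but $\repR_{b}$ not'' to be disposed of by a short direct computation. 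A purely diagrammatic alternative, sidestepping $\qSL{2|1}$ altogether, is also available: for $\la\in\Cr_{m,n}\setminus\at_{m,n}$ one has $K(\la)=D(\la)=S(\la)$, so $\resN K(\la)$ is given by Proposition~\bref{restr_S_A} directly as a sum of simple (hence projective) $\Alg_{m,n-1}$-modules; for $\la\in\at_{m,n}$ one reads the Specht filtration of $K(\la)$ off Theorem~\bref{K_structure} (using $\tilde d_{\la\mu}=d_{\la\mu}$ as in~\bref{B-H_rec} and Conjecture~2 (\bref{conj_2})), applies Proposition~\bref{restr_S_A} to each Specht layer, and matches the resulting Specht-multiplicity vector against that of the direct sum $\bigoplus_{j}K(\mu_{j})$ predicted by Theorem~\bref{K_structure}; promoting this equality of Specht contents to an isomorphism of modules then follows from quasi-heredity once the relevant extension groups between consecutive layers are shown to vanish. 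The companion functor $\res^{m,n}_{m-1,n}$ is treated in the same way with $\three$ in the role of $\bthree$, or deduced via the symmetry $\operS$ of~\bref{opS1}.
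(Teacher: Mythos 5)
Your primary argument — transporting $\resN$ to the $\qSL{2|1}$ side via $\resN K(\la)=\mathrm{Hom}_{\qSL{2|1}}(W_\la,\Chain_{m,n-1}\tensor\bthree)\cong\mathrm{Hom}_{\qSL{2|1}}(W_\la\tensor\three,\Chain_{m,n-1})$ and then invoking Theorem~\bref{tp-decomp} — is a genuinely different route from the paper's. The paper proves the assertion (for $\la\in\at_{m,n}$, and hence in general) in the proof of Theorem~\bref{restr_K_A}, entirely on the $\qwb$-side: it uses the Specht filtration $K(\la)=S(\la)\bup S(\mu)$ from Theorem~\bref{K_structure}, restricts each Specht layer by Proposition~\bref{restr_S_A}, and then reassembles the pair of atypical Specht modules in the answer into a $K(\cdot)$ by uniqueness of gluing; the typical $\la$ are handled by Proposition~\bref{restr_S_A} directly since there $K(\la)=D(\la)=S(\la)$. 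That is exactly the ``purely diagrammatic alternative'' you sketch in your final sentences, so your backup route does coincide with the paper's argument.

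The gap in your primary route is real and is not fully closed by the reduction you offer. Writing $W_\la\tensor\three=\bigoplus_i Y_i$ and $\resN K(\la)=\bigoplus_i\mathrm{Hom}_{\qSL{2|1}}(Y_i,\Chain_{m,n-1})$, you need each $\mathrm{Hom}_{\qSL{2|1}}(Y_i,\Chain_{m,n-1})$ to be projective over $\Alg_{m,n-1}$ (or zero). If $Y_i$ is itself a summand of $\Chain_{m,n-1}$, you get a $K(\mu)$; but $Y_i$ need not be a summand while the Hom-space is nonzero. Because the bimodule $\Chain_{m,n-1}$ is \emph{not} semisimple even at generic $\q$ (as the paper emphasizes), you cannot compute $\mathrm{Hom}_{\qSL{2|1}}(Y_i,\Chain_{m,n-1})$ from a factorised decomposition $\bigoplus W_\mu\boxtimes K(\mu)$, and there is no general reason for this Hom-space to be $\Alg_{m,n-1}$-projective when $Y_i$ is a nonsummand. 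Your proposed reduction — that it suffices for $\Alg_{m,n}$ to be projective as a left $\Alg_{m,n-1}$-module — is logically valid, but that freeness/projectivity is itself a substantive unproved claim for the quotient algebras $\Alg_{m,n}$, not a formality (for the walled Brauer algebra proper it comes from a cellular/stratified structure, which is not established here for $\Alg_{m,n}$). Also note that $\Chain_{m,n-1}$ is generally \emph{not} a projective $\qSL{2|1}$-module (already $\three\tensor\three$ has the atypical simple $\repZ^{1,1}_{2,2}$ as a summand), so $W_\la$ is not always projective, and you cannot conclude by the ``tensor of projective is projective'' shortcut alone. So the Hom-transport strategy is an attractive alternative in principle, but as written it does not yet constitute a proof; the diagrammatic route via Specht filtrations (which is what the paper does) is the one that actually closes.
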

\begin{thm}\label{restr_K_A}
Consider $n\geq 1$. For $\la \in \at_{m,n}$ the restrictions for projective modules $K(\la)$ over  the algebra $\Alg_{m,n}$ are
\begin{align*}
&\resN\, K(\Da{a}{s}) = K(\Da{a+1}{s}) \oplus D[(a,1^{s+1}),(s)] \oplus 2D[(a,1^s),(s-1)] \oplus D[(a,1^{s-1}),(s-2)], \\ 
&\quad 2 \leq s \leq n-1, \quad a \geq 1,
\\
&\resN\, K(\Da{a}{1}) = K(\Da{a+1}{1}) \oplus D[(a,1^{2}),(1)] \oplus +2D[(a,1),\emp] \oplus D[(a,2),(1)],\quad a\geq 2, n\geq 2,
\\
&\resN\, K(\Da{1}{1}) = K(\Da{2}{1}) \oplus D[(1^{3}),(1)] \oplus 2D[(1^{2}),\emp], \quad n \geq 2,
\\
&\resN\, K(\Da{a}{n}) = D(\Da{a+1}{n-1}) \oplus 2D[(a,1^n),(n-1)] \oplus D[(a,1^{n-1}),(n-2)],  \quad n \geq 2,
\\
&\resN\, K(\Da{a}{0}) = K(\Da{a+1}{0}) \oplus D[(a,1),\emp], \quad a \geq 1, \quad n \geq 1,  
\\
&\resN\, K(\Da{0}{0}) = K(\Da{1}{0}), \quad n \geq 1,
\\
\\
&\resN\, K(\Db{a}{s}) = K(\Db{a+1}{s}) \oplus D[(a,s+1),(1^s)] \oplus 2D[(a,s),(1^{s-1})] \oplus D[(a,s-1),(1^{s-2})], \\
&\quad 2 \leq s \leq \min(a,n)-1,
\\
&\resN\, K(\Db{a}{a}) = K(\Db{a+1}{a}) \oplus 2D[(a,a),(1^{a-1})] \oplus D[(a,a-1),(1^{a-2})], \quad 2 \leq a \leq n-1,
\\
&\resN\, K(\Db{a}{n}) = D(\Db{a+1}{n-1}) \oplus 2D[(a,n),(1^{n-1})] \oplus D[(a,n-1),(1^{n-2})], \, 2 \leq n \leq a,
\\
\\
&\resN\, K(\Dc{a}{s}) = K(\Dc{a+1}{s})  \oplus D[(s+2,a+1),(1^{s+2})] \oplus 2D[(s+1,a+1),(1^{s+1})] \oplus \\ 
&\quad \oplus D[(s,a+1),(1^{s})], \qquad 
\quad a+2 \leq s \leq n-3,
\\
&\resN\, K(\Dc{a}{a}) = K(\Db{a+1}{a+1}) \oplus D[(a+2,a+1),(1^{a+2})] \oplus D[(a,a),(1^{a-1})], \quad 2 \leq a \leq n-3,
\\
&\resN\, K(\Dc{a}{a+1}) = K(\Dc{a+1}{a+1}) \oplus D[(a+3,a+1),(1^{a+3})] \oplus 2D[(a+2,a+1),(1^{a+2})], \quad a \leq n-4,
\\
&\resN\, K(\Dc{a}{n-2}) = D(\Dc{a+1}{n-3}) \oplus 2 D[(n-1,a+1),(1^{n-1})] \oplus D[(n-2,a+1),(1^{n-2})], \\
 &\quad a \leq n-4,
\\
&\resN\, K(\Dc{n-3}{n-2}) = D(\Db{n-2}{n-2}) \oplus 2 D[(n-1,n-2),(1^{n-1})], \quad n \geq 3,
\\
&\resN\, K(\Dc{n-2}{n-2}) = K(\Db{n-1}{n-1}) \oplus D[(n-2,n-2),(1^{n-3})], \quad n \geq 3,
\\
&\resN\, K(\Dc{1}{1}) = K(\Db{2}{2}) \oplus D[(3,2),(1^{3})] \oplus D[(1^2),\emp],
\quad n \geq 4, 
\\
&\resN\, K(\Dc{0}{0}) = K(\Da{1}{1}) \oplus D[(2,1),(1^{2})] \oplus D[(1^3),(1^{2})],
\quad n \geq 3,
\\
\\
&\resN\, K(\Daa{a}{s}) = K(\Daa{a-1}{s}) \oplus D[(s+1),(a,1^{s})] \oplus 2D[(s),(a,1^{s-1})] \oplus D[(s-1),(a,1^{s-2})], \\
&\quad 2 \leq s \leq m-1, \quad a \geq 2,
\\
&\resN\, K(\Daa{a}{1}) = K(\Daa{a-1}{1}) \oplus D[(2),(a,1)] \oplus 2D[(1),(a)] \oplus D[(1^2),(a,1)],
\quad a \geq 2, \quad m\geq 2,
\\
&\resN\, K(\Daa{a}{m}) = K(\Daa{a-1}{m}) \oplus 2D[(m),(a,1^{m-1})] \oplus D[(m-1),(a,1^{m-2})], 
\quad m \geq 2, \quad a \geq 2,
\\
&\resN\, K(\Daa{1}{s}) = K(\Dc{0}{s-1}) \oplus D[(s+1),(1^{s+1})] \oplus 2D[(s),(1^{s})] \oplus D[(s-1),(1^{s-1})], \quad 2 \leq s < m,
\\
&\resN\, K(\Daa{1}{m}) = D(\Dc{0}{m-2}) \oplus 2D[(m),(1^{m})] \oplus D[(m-1),(1^{m-1})], 
\quad m \geq 2,
\\
&\resN\, K(\Daa{1}{1}) = K(\Dc{0}{0}) \oplus D[(2),(1^2)] \oplus 2D[(1),(1)], \quad m \geq 2,
\\
&\resN\, K(\Daa{a}{0}) = K(\Daa{a-1}{0}) \oplus D[(1),(a)], 
\quad a \geq 1, \quad m \geq 1,
\\
\\
&\resN\, K(\Dbb{a}{s}) = K(\Dbb{a-1}{s}) \oplus D[(1^{s+1}),(a,s)] \oplus 2D[(1^{s}),(a,s-1)] \oplus D[(1^{s-1}),(a,s-2)],\\
&\quad 2 \leq s \leq \min(a,m)-1,
\\
&\resN\, K(\Dbb{a}{a}) = K(\Dcc{a-1}{a-1}) \oplus 2D[(1^{a}),(a,a-1)] \oplus D[(1^{a-1}),(a,a-2)],
\quad 2 \leq a \leq m-1,
\\
&\resN\, K(\Dbb{a}{m}) = K(\Dbb{a-1}{m}) \oplus 2D[(1^{m}),(a,m-1)] \oplus D[(1^{m-1}),(a,m-2)],   \quad 2 \leq m < a,
\\
&\resN\, K(\Dbb{m}{m}) = D(\Dbb{m-1}{m-1}) \oplus 2D[(1^{m}),(m,m-1)] \oplus D[(1^{m-1}),(m,m-2)],  \quad m \geq 2,
\\
\\
&\resN\, K(\Dcc{a}{s}) = K(\Dcc{a-1}{s}) \oplus D[(1^{s+3}),(s+1,a+1)] \oplus 2D[(1^{s+2}),(s,a+1)] \\ &\qquad \qquad \oplus D[(1^{s+1}),(s-1,a+1)], \quad a+2 \leq s \leq m-3, \quad a \geq 1,
\\
&\resN\, K(\Dcc{0}{s}) = K(\Da{1}{s+1}) \oplus D[(1^{s+3}),(s+1,1)] \oplus 2D[(1^{s+2}),(s,1)] \oplus D[(1^{s+1}),(s-1,1)], \\
&\quad 2 \leq s \leq m-3, 
\\
&\resN\, K(\Dcc{a}{a}) = K(\Dcc{a-1}{a}) \oplus D[(1^{a+3}),(a+1,a+1)] \oplus D[(1^{a}),(a,a-1)], \quad 1 \leq a \leq m-3,
\\
&\resN\, K(\Dcc{a}{a+1}) = K(\Dcc{a-1}{a+1}) \oplus D[(1^{a+4}),(a+2,a+1)] \oplus 2D[(1^{a+3}),(a+1,a+1)], \quad 1 \leq a \leq m-4,
\\
&\resN\, K(\Dcc{a}{m-2}) = K(\Dcc{a-1}{m-2}) \oplus 2D[(1^{m}),(m-2,a+1)] \oplus D[(1^{m-1}),(m-3,a+1)], \quad 1 \leq a \leq m-4,
\\
&\resN\, K(\Dcc{0}{1}) = K(\Da{1}{2}) \oplus D[(1^{4}),(2,1)] \oplus 2D[(1^{3}),(1,1)], \quad m \geq 4, 
\\
&\resN\, K(\Dcc{0}{m-2}) = K(\Da{1}{m-1}) \oplus 2D[(1^{m}),(m-2,1)] \oplus D[(1^{m-1}),(m-3,1)], \quad m \geq 4, 
\\
&\resN\, K(\Dcc{m-3}{m-2}) = K(\Dcc{m-4}{m-2}) \oplus 2D[(1^{m}),(m-2,m-2)] , \quad m \geq 4,
\\
&\resN\, K(\Dcc{m-2}{m-2}) = K(\Dcc{m-3}{m-2}) \oplus D[(1^{m-2}),(m-2,m-3)] , \quad m \geq 3.
\end{align*}
where we imply $(0)=(1^0)=\emp$ and $(s,0)=(s)$.
\end {thm}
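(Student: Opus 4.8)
The plan is to compute $\resN K(\la)$ first as a module equipped with a Specht filtration, and only afterwards to split it into indecomposables.

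Granting Conjecture~2~(\bref{conj_2}), the algebra $\Alg_{m,n}$ is quasihereditary, so each $K(\la)$ carries a filtration by $\Alg_{m,n}$-Specht modules with multiplicities $\tilde d_{\la\mu}=d_{\la\mu}=\big[S(\mu):D(\la)\big]$ by Brauer--Humphreys reciprocity, and for $\la\in\at_{m,n}$ these filtrations are read off from the Loewy graphs of Theorem~\bref{K_structure} (each ``diamond'' graph being an extension of two length-two Specht modules, each ``chain'' a composite of Specht modules of length at most two). Since restriction of scalars along $\qwb_{m,n-1}\hookrightarrow\qwb_{m,n}$ is exact, $\resN$ carries this Specht filtration of $K(\la)$ to a filtration of $\resN K(\la)$ whose subquotients are the modules $\resN S(\mu)$, each computed explicitly by Proposition~\bref{restr_S_A}. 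This already determines $[\resN K(\la)]$ in the Grothendieck group $K_0(\Alg_{m,n-1})$ as a concrete non-negative combination of the classes $[S(\nu)]$, hence of the $[D(\nu)]$ through the decomposition matrix of \cite{Cox2010}.

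Next I would reassemble $\resN K(\la)$. Because the decomposition matrix of $\Alg_{m,n-1}$ is unitriangular, the classes $[K(\nu)]$ are linearly independent in $K_0(\Alg_{m,n-1})$, so a direct sum of indecomposable projectives is determined by its composition multiplicities; comparing the multiset $\{\resN S(\mu)\}$ with the Specht filtrations of the $K(\nu)$ (again from Theorem~\bref{K_structure}) identifies the projective covers $K(\nu)$, $\nu\in\at_{m,n-1}$, that occur, and each such projective splits off as a direct summand. The Specht modules left over are either $S(\nu)$ with $\nu\notin\at_{m,n-1}$, which equal the simple projective $D(\nu)=K(\nu)$, or a few ``wall'' bipartitions $\nu\in\at_{m,n-1}$ for which $S(\nu)=D(\nu)$ is already simple --- these account for the bare $D(\,\cdot\,)$ summands in the statement, such as $D(\Dc{0}{m-2})$ inside $\resN K(\Daa{1}{m})$. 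Since $\Exti(K(\nu),-)=0$, extensions by projectives split, and a case check shows that the residual non-projective simple subquotients do not extend one another; the remainder is therefore their direct sum, which assembles into the displayed identities.

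The rest is combinatorics. For $m>n$ and $\la$ one of $\Da{a}{s},\Db{a}{s},\Dc{a}{s}$, one lists $\remov(\la^R)$ and those $\Box\in\add(\la^L)$ with $(\la^L+\Box,\la^R)\in\Cr_{m,n-1}$; the cross condition of Proposition~\bref{restr_S_A} is precisely what removes the ``wrong'' additions (e.g.\ the box lengthening the second row of $(a,1^s)$) and what, at the walls $m=n\pm1$, rewrites the transitional families ($\Daa{1}{s}$ in terms of $\Dc{0}{s-1}$, $\Dc{n-2}{n-2}$ in terms of $\Db{n-1}{n-1}$, and so on). Matching the surviving Spechts against the Specht filtration of $K(\Da{a+1}{s})$ (respectively $K(\Db{a+1}{s})$, $K(\Dc{a+1}{s})$) reproduces each line. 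The cases $m=n$ and $m<n$ are handled by the same bookkeeping, the $m<n$ families $\Daa{\cdot}{\cdot},\Dbb{\cdot}{\cdot},\Dcc{\cdot}{\cdot}$ being the $\operS$-images of $\Da{\cdot}{\cdot},\Db{\cdot}{\cdot},\Dc{\cdot}{\cdot}$, cf.\ (\bref{opS1})--(\bref{S_onK}). I expect the principal obstacle to be exactly this case analysis, together with the degenerations of the Loewy graphs at the boundary values $s\in\{0,1,a,a\pm1,n-3,n-2,n-1,n\}$ and at the coincidences $\Db{a}{1}=\Da{a}{1}$ and $\Dcc{0}{0}=\Dc{0}{0}$; the one genuinely structural step --- upgrading the Grothendieck-group identity to a direct-sum decomposition --- is handled through the vanishing of $\Exti$ from projectives plus the non-extension check above, a uniform form of which would require the $\bExti$-computation over $\Alg_{m,n-1}$ that the paper does not pursue.
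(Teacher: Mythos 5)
Your proposal follows the same route as the paper's own proof: pass to a Specht filtration of $K(\la)$ (from the quasihereditary structure and Theorem~\bref{K_structure}), restrict each Specht subquotient via Proposition~\bref{restr_S_A}, and then reassemble the typical (projective simple) pieces and the remaining atypical Spechts into the displayed indecomposables. The paper's proof does only the case $K(\Da{a}{s})$, $2\le s\le n-1$, asserting without further argument that the two leftover atypical Spechts ``are glued uniquely into a projective module''; you correctly identify this reassembly as the only nontrivial structural step, and your Grothendieck-group plus $\Exti$-vanishing discussion is a more careful articulation of the same point rather than a genuinely different argument.
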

\begin{proof}
We discuss the case $K(\Da{a}{s})$ for $2 \leq s \leq n-1$, $a \geq 1$. Other cases are similar. The projective module $K(\Da{a}{s})$ has a filtration by two atypical Specht modules, so one can write it as a non direct sum 
\begin{equation*}
 K(\Da{a}{s}) = S(\Da{a}{s}) \bup S(\Da{a}{s-1}).
\end{equation*}
Applying the Proposition \bref{restr_S_A} one obtains the sum of simple and atypical Specht modules:
\begin{multline*}
\resN\, K(\Da{a}{s}) = \resN\, \big(S(\Da{a}{s}) \bup S(\Da{a}{s-1})\big) = \\ 
=S(\Da{a+1}{s}) \bup S[(a,1^{s+1}),(s)] \bup S[(a,1^{s}),(s-1)] \\ 
\bup S(\Da{a+1}{s-1}) \bup S[(a,1^{s}),(s-1)] \bup S[(a,1^{s-1}),(s-2)].
\end{multline*}
In this sum only two modules are atypical, other modules are simple 
\begin{multline*}
\resN\, K(\Da{a}{s}) = S(\Da{a+1}{s}) \bup S(\Da{a+1}{s-1}) \bup \\
D[(a,1^{s+1}),(s)] \bup 2D[(a,1^{s}),(s-1)] \bup D[(a,1^{s-1}),(s-2)].
\end{multline*}
These two atypical Specht modules are glued uniquely into a projective module, thus
\begin{equation*}
 \resN\, K(\Da{a}{s})=K(\Da{a+1}{s}) \,\oplus\, D[(a,1^{s+1}),(s)] \,\oplus\, 2D[(a,1^{s}),(s-1)] \oplus 
 D[(a,1^{s-1}),(s-2)].
\end{equation*}
\end{proof}

To formulate the next theorem we introduce notation $a' = |m-n+1|$. 
\begin{thm}\label{restr_D_A}
Consider $\la \in \Lambda_{m,n}(f) \bigcap \Cr_{m,n}$ for $n\geq 1$. 
The restrictions for simple modules $D(\la)$ over the algebra $\Alg_{m,n}$ are
\\
for $\la \in \at_{m,n}$:
\begin{align*}
&\resN D(\Da{a}{s})=
D(\Da{a+1}{s}) \oplus D[ (a, 1^{s}), (s-1)], 
\quad a \geq 1, \quad 1\leq s \leq n-1, 
\\
&\resN D(\Da{a}{n})= D[ (a, 1^{n}), (n-1)], 
\quad a \geq 1, \quad n \geq 1,
\\
&\resN D(\Da{a}{0})=
D(\Da{a+1}{0}), \quad a \geq 0, \quad n \geq 0, 
\\
&\resN D(\Db{a}{s})=
D(\Db{a+1}{s})\oplus D[ (a,s), (1^{s-1}) ],
\quad 1\leq s \leq n-1, \quad s \leq a, 
\\
&\resN D(\Db{a}{n})= D[ (a,n), (1^{n-1}) ],
\quad 1\leq n \leq a, 
\\
&\resN D(\Dc{a}{s})=
D(\Dc{a+1}{s})\oplus D[ (s+1,a+1), (1^{s+1}) ], 
\quad a+1 \leq s \leq n-3, \quad a \geq 0,
\\
&\resN D(\Dc{a}{n-2})= D[ (n-1,a+1), (1^{n-1}) ], 
\quad 0 \leq a \leq n-3,
\\
&\resN D(\Dc{a}{a})= D( \Db{a+1}{a+1}), 
\quad 0 \leq a \leq n-2,
\\
\\
&\resN D(\Daa{a}{s})=
D(\Daa{a-1}{s})\oplus D[ (s), (a, 1^{s-1}) ],
\quad 1 \leq s \leq m, \quad a \geq 2, 
\\
&\resN D(\Daa{a}{0})= D(\Daa{a-1}{0}),
\quad a \geq 1, \quad m \geq 0, 
\\
&\resN D(\Daa{1}{s})= D(\Dc{0}{s-1}) \oplus D[ (s), (1^{s}) ],
\quad 1 \leq s \leq m-1, 
\\
&\resN D(\Daa{1}{m})= D[ (m), (1^{m}) ], \quad m \geq 1, 
\\
&\resN D(\Dbb{a}{s})=
D(\Dbb{a-1}{s})\oplus D\big[ (1^s), (a,s-1)  \big],
\quad 2 \leq s \leq m, \quad s<a,
\\
&\resN D(\Dbb{a}{a})=
D(\Dcc{a-1}{a-1})\oplus D\big[ (1^a), (a,a-1)  \big] , \quad 2 \leq a \leq m-1,
\\
&\resN D(\Dbb{m}{m})=
D\big[ (1^m), (m,m-1)  \big] , \quad 1 \leq m,
\\
&\resN D(\Dcc{a}{s})=
D(\Dcc{a-1}{s}) \oplus D\big[ (1^{s+2}), (s,a+1) \big],
\quad a+1 \leq s \leq m-2, \quad a \geq 1,
\\
&\resN D(\Dcc{a}{a})= D(\Dcc{a-1}{a}),
\quad 1 \leq a \leq m-2,
\\
&\resN D(\Dcc{0}{s})=
D(\Da{1}{s+1}) \oplus D\big[ (1^{s+2}), (s,1) \big],
\quad 1 \leq s \leq m-2.
\end{align*}
For $\la \notin \at_{m,n}$ first we list all exceptional cases (the generic rule will be given below):
\begin{align*}
&\resN D\big[ (a', 1^{s-1}), (s)\big]=
K(\Da{a'}{s})\oplus D\big[ (a'+1, 1^{s-1}), (s)\big] ,
\quad 1 \leq s \leq n-1, \quad a' \geq 1,
\\
&\resN D\big[ (a',s), (1^{s+1}) \big]=
K(\Db{a'}{s+1})\oplus D\big[ (a'+1,s), (1^{s+1}) \big],   
\quad 1 \leq s \leq a'-1, \quad s \leq n-2,
\\
&\resN D\big[ (s,a'+1), (1^{s+2}) \big]=
 K(\Dc{a'}{s})\oplus D\big[ (s,a'+2), (1^{s+2}) \big] , \quad a'+2 \leq s \leq n-3,
\\
&\resN D\big[ (a'+1,a'+1), (1^{a'+3}) \big]=
 K(\Dc{a'}{a'+1}), \quad a' \leq n-4,
\\
&\resN D\big[ (s), (a', 1^{s+1})\big]=
K(\Daa{a'}{s+1})\oplus D\big[ (s), (a'-1, 1^{s+1}) \big], 
\quad 0 \leq s \leq m-1, \quad a' \geq 2,
\\
&\resN D\big[ (s), (1^{s+2})\big]=
K(\Daa{1}{s+1})\oplus D\big[ (s,1), (1^{s+2}) \big], 
\quad 1 \leq s \leq m-1,
\\
&\resN D\big[ (1^{s-1}), (a',s) \big]=
 K(\Dbb{a'}{s})\oplus D\big[ (1^{s-1}), (a'-1,s)  \big] , 
\quad 2 \leq s \leq a'-1, \quad s \leq m,
\\
&\resN D\big[ (1^{a'-1}), (a',a') \big]=
 K(\Dbb{a'}{a'}) , 
\quad 1 \leq a' \leq m,
\\
&\resN D\big[ (1^s), (s,a'+1) \big]=
K(\Dcc{a'}{s-1})\oplus D\big[ (1^{s}), (s,a')  \big],
\quad a'+1 \leq s \leq m-1,
\end{align*}
where we imply $(0)=(1^0)=\emp$ and $(s,0)=(s)$.\\
For $\la \notin \at_{m,n}$ the generic rule is: \\
for $f=0$
\begin{equation*}
  \resN\, D(\la) = 
 \sum_{\Box \in \remov (\la^R)}  D( \la^L, \la^R-\Box ),
\end{equation*}
for $f>0$ 
\begin{equation*}
  \resN\, D(\la) = 
 \bigoplus_{\Box \,\in \add (\la^L),  (\la^L +\Box, \la^R) \in \Cr_{m,n-1} }  
 D( \la^L +\Box, \la^R) \oplus
 \bigoplus_{\Box \,\in \remov (\la^R)}  D( \la^L, \la^R-\Box ). 
\end{equation*}
\end{thm}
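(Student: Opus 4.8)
The plan is to split the argument according to whether $\lambda$ is atypical and, in both cases, to reduce the computation of $\resN D(\lambda)$ to that of $\resN S(\lambda)$, which Proposition \ref{restr_S_A} supplies. Two preliminary observations are needed. By Theorem \ref{K_structure} every bipartition occurring as a subquotient of one of the nontrivial Loewy graphs there is atypical; hence for $\lambda\in\Cr_{m,n}\setminus\at_{m,n}$ the module $D(\lambda)$ is simultaneously the Specht module $S(\lambda)$ and the projective $K(\lambda)$, so in particular $\resN D(\lambda)=\resN S(\lambda)$. For $\lambda\in\at_{m,n}$ the $\Alg_{m,n}$-Specht module $S(\lambda)$ has simple head $D(\lambda)$ and, by the decomposition numbers of \cite{Cox2010} as recorded in those Loewy graphs, at most one further composition factor $D(\mu(\lambda))$, which when present is the socle, so there is a short exact sequence
\[
0\longrightarrow D(\mu(\lambda))\longrightarrow S(\lambda)\longrightarrow D(\lambda)\longrightarrow 0 .
\]
I will also use that a finite-dimensional $\Alg_{m,n-1}$-module whose composition factors are all of the form $D(\nu)$ with $\nu\in\Cr_{m,n-1}\setminus\at_{m,n-1}$ is semisimple, since such $D(\nu)$ are projective by Theorem \ref{K_structure} and can be split off one at a time.

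\textbf{Case $\lambda\notin\at_{m,n}$.} Here $\resN D(\lambda)=\resN S(\lambda)$, which by Proposition \ref{restr_S_A} has a Specht filtration by the $S(\nu)$, $\nu\in\Cr_{m,n-1}$, obtained by adding a box to $\lambda^L$ (keeping it a $(2,1)$-cross bipartition) or removing one from $\lambda^R$. The factors with $\nu\notin\at_{m,n-1}$ are simple projective and split off. Inspecting the box combinatorics against the list $\at_{m,n-1}$ shows that either none of the $\nu$ is atypical, which gives the generic rule $\resN D(\lambda)=\bigoplus_\nu D(\nu)$, or exactly two, say $\nu_1>\nu_2$, are atypical, and these are precisely the two Specht factors of the indecomposable projective $K(\nu_1)$; gluing them uniquely as in the proof of Theorem \ref{restr_K_A} gives $\resN D(\lambda)=K(\nu_1)\oplus\bigoplus_{\nu\neq\nu_1,\nu_2}D(\nu)$. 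Running this over all families reproduces the listed exceptional cases; for instance $\resN S\big[(a',1^{s-1}),(s)\big]$ has Specht factors $S(\Da{a'}{s})$, $S(\Da{a'}{s-1})$ and the simple projective $S\big[(a'+1,1^{s-1}),(s)\big]$, and the first two glue to $K(\Da{a'}{s})$.

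\textbf{Case $\lambda\in\at_{m,n}$.} I would induct on the quasihereditary order. For $\lambda$ maximal, $S(\lambda)$ is simple and $\resN D(\lambda)=\resN S(\lambda)$ is handled as above (and comes out a single simple projective). For general atypical $\lambda$, apply the exact functor $\resN$ to the short exact sequence above and pass to the Grothendieck group:
\[
[\resN D(\lambda)]=[\resN S(\lambda)]-[\resN D(\mu(\lambda))],
\]
where the second term is known by induction and the first from Proposition \ref{restr_S_A} together with the two-term structure of the atypical $\Alg_{m,n-1}$-Specht modules. Carrying out this cancellation case by case reproduces exactly the claimed lists of composition factors, and in every case all of these except at most one lie in $\Cr_{m,n-1}\setminus\at_{m,n-1}$, the exceptional one occurring with multiplicity one; by the semisimplicity remark above, $\resN D(\lambda)$ is then the stated direct sum of simple modules. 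The families with $m\le n$ (the $\Daa{a}{s}$, $\Dbb{a}{s}$, $\Dcc{a}{s}$, together with identifications such as $\Daa{1}{s}\leftrightarrow\Dc{0}{s-1}$) are treated identically, using $\operS$ to transport the Loewy graphs of Theorem \ref{K_structure}; the generic rule for $\lambda\notin\at_{m,n}$ is the specialisation of the all-$\nu$-non-atypical case of the previous paragraph.

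\textbf{The main obstacle} is the upgrade from Grothendieck-group identities to genuine module decompositions, that is, controlling the non-simple-projective part of each restriction. When $\lambda\in\at_{m,n}$ this is the multiplicity-one observation above; when $\lambda\notin\at_{m,n}$ it relies on $\resN$ preserving Specht filtrations (Proposition \ref{restr_S_A}, following \cite{CDM2007}) together with the explicit shape of the Loewy graphs in Theorem \ref{K_structure}, which forces a pair of atypical Specht factors to assemble into the unique indecomposable projective carrying that Specht content. The remaining work is the case analysis over the families $\Da{a}{s}$, $\Db{a}{s}$, $\Dc{a}{s}$ and the boundary values of $s$ and $a$, entirely parallel to the proof of Theorem \ref{restr_K_A}.
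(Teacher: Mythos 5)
Your proposal is correct and tracks the paper's own proof closely: the paper also splits on $\lambda\in\at_{m,n}$ versus $\lambda\notin\at_{m,n}$, uses $D(\lambda)=S(\lambda)$ plus Proposition \bref{restr_S_A} with the ``unique gluing into a projective'' argument from the proof of Theorem \bref{restr_K_A} for the non-atypical case, and for atypical $\lambda$ does the same descending induction (along the chain $s=n,n-1,\dots$, equivalently your quasihereditary order) by writing $S(\lambda)$ as a two-term non-direct sum and cancelling in the Grothendieck group, which is precisely your short-exact-sequence formulation. The one genuine addition in your write-up is the explicit remark that projective simples can be split off, justifying the upgrade from composition multiplicities to an actual direct-sum decomposition; the paper leaves this implicit.
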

\begin{proof}
If $\la \notin \at_{m,n}$ then $D(\la) = S(\la)$, and the proof follows from \bref{restr_S_A} similarly to the proof of Theorem \bref{restr_K_A}. 

Now we consider $\la \in \at_{m,n}$. 
We discuss only $D(\Da{a}{s})$ for $a \geq 1$, $1\leq s \leq n-1$, other cases are similar. 
We prove that
\begin{equation*}
\resN D(\Da{a}{s})=
D(\Da{a+1}{s}) \oplus D\big[ (a, 1^{s}), (s-1)\big], \quad a \geq 1, s \leq n-1, \\
\end{equation*}
by induction on $s$.
First, we prove the induction base for $s=n-1$, then we check the induction step from $s$ to $s-1$.
The $\Alg_{m,n}$-module $S(\Da{a}{n})$ is simple: $S(\Da{a}{n})=D(\Da{a}{n})$, so we have from \bref{restr_S_A}
\begin{equation}\label{resProf1}
  \resN\, D(\Da{a}{n}) = \resN\, S(\Da{a}{n}) =  S\big[ (a,1^{n}), (n-1) \big] = 
  D\big[ (a,1^{n}), (n-1) \big].
\end{equation}
According to \bref{restr_S_A} we have for $s<n$
\begin{equation}\label{resProf2}
  \resN\, S(\Da{a}{s}) = S(\Da{a+1}{s}) \oplus D\big[ (a,1^{s+1}), (s) \big] \oplus D\big[ (a,1^s), (s-1) \big].
\end{equation}
We write $\Alg_{m,n}$-Specht modules as a non-direct sum $S(\Da{a}{s}) =D(\Da{a}{s})\bup D(\Da{a}{s+1})$ for $s<n$.
The $\Alg_{m,n-1}$-module  $S(\Da{a+1}{n-1}) = D(\Da{a+1}{n-1})$, so from (\ref{resProf2}) for $s=n-1$ we get
\begin{equation*}
  \resN\, \Big(D(\Da{a}{n-1})\bup D(\Da{a}{n})\Big) =D(\Da{a+1}{n-1}) \bup
  D\big[ (a,1^{n}), (n-1) \big] \bup D\big[ (a,1^{n-1}), (n-2) \big].
\end{equation*}
Now having in mind (\ref{resProf1}) we get the induction base
\begin{equation*}
  \resN\, D(\Da{a}{n-1}) =D(\Da{a+1}{n-1}) \bup D\big[ (a,1^{n-1}), (n-2) \big].
\end{equation*}
We also note that $\Alg_{m,n-1}$ module 
$S(\Da{a+1}{s}) = D(\Da{a+1}{s}) \bup D(\Da{a+1}{s+1})$ for $s<n-1$, so from (\ref{resProf2}) we get
\begin{equation*}
  \resN\, \Big(D(\Da{a}{s})\bup D(\Da{a}{s+1})\Big) =D(\Da{a+1}{s})\bup D(\Da{a+1}{s+1}) \bup D\big[ (a,1^s), (s-1) \big] 
  \bup D\big[ (a,1^{s+1}), (s) \big],
\end{equation*}
and now the induction step is straightforward.
\end{proof}

\begin{rem}
The second restriction functor $\res^{m,n}_{m-1,n}$ can be calculated from the first one. Actually
\begin{align}
&\res^{m,n}_{m-1,n} K(\la) = \operS \,\res^{n,m}_{n,m-1} \operS K(\la), \\
&\res^{m,n}_{m-1,n} D(\la) = \operS \,\res^{n,m}_{n,m-1} \operS D(\la).
\end{align}
\end{rem}

We can also make generalization to the $\qwb_{m,n}$ modules.
\begin{conj}
Consider the algebra $\qwb_{m,n}$ with special parameter
 $\theta = -(-\frac{\delta}{\gamma})^{M-N}$. Let $\la \in \Lambda_{m,n}$ be an $(M,N)$-cross bipartition, 
then $\resN\, D(\la)$ contains only subquotients $D(\la')$ 
for which $\la' \in \Lambda_{m,n-1}$ is an $(M,N)$-cross bipartition.  
\end{conj}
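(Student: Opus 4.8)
The plan is to deduce this from the Schur--Weyl picture, so that it appears as the $(M,N)$-generalisation of Theorem~\bref{restr_D_A}, rather than from a direct manipulation of decomposition numbers. First I would recall, from the discussion around \eqref{ChainNichRules}, that for the special parameter $\theta=-(-\frac{\delta}{\gamma})^{M-N}$ there is a surjection $\Psi^{M,N}_{m,n}\colon\qwb_{m,n}\to\Alg^{M,N}_{m,n}=\mathrm{End}_{\qSL{M|N}}\bigl({V^\star}^{\tensor m}\tensor V^{\tensor n}\bigr)$, and that --- by the $(M,N)$-analogue of Proposition~\bref{crossbip}, obtained from \cite{CW11,Stroppel12} modulo Conjecture~1 (\bref{conj_1}) --- the simple $\qwb_{m,n}$-modules on which $\mathrm{ker}\,\Psi^{M,N}_{m,n}$ acts as zero are precisely the $D(\la)$ with $\la$ an $(M,N)$-cross bipartition, these forming a complete set of simple $\Alg^{M,N}_{m,n}$-modules. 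It then suffices to show that $\mathrm{ker}\,\Psi^{M,N}_{m,n-1}$ acts as zero on $\resN\, D(\la)$ whenever $\la$ is an $(M,N)$-cross bipartition.

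The key step I would establish is a compatibility of the two Schur--Weyl surjections with the tower embedding $\iota\colon\qwb_{m,n-1}\to\qwb_{m,n}$ of \bref{Res_definition}. On the mixed tensor products there is an algebra homomorphism $j\colon\Alg^{M,N}_{m,n-1}\to\Alg^{M,N}_{m,n}$, $\phi\mapsto\phi\tensor\Id_V$, well defined because $\phi\tensor\Id_V$ commutes with the (coproduct) $\qSL{M|N}$-action on ${V^\star}^{\tensor m}\tensor V^{\tensor n}$. Since $\iota$ identifies the generators $\EE,g_1,\dots,g_{m-1},h_1,\dots,h_{n-2}$ of $\qwb_{m,n-1}$ with the same-named generators of $\qwb_{m,n}$, and since all of these act on the chain through its first $m+n-1$ tensor factors only, a check on generators gives $\Psi^{M,N}_{m,n}\circ\iota=j\circ\Psi^{M,N}_{m,n-1}$; in particular $\iota\bigl(\mathrm{ker}\,\Psi^{M,N}_{m,n-1}\bigr)\subseteq\mathrm{ker}\,\Psi^{M,N}_{m,n}$.

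The conclusion is then formal. If $\la$ is an $(M,N)$-cross bipartition, then $\mathrm{ker}\,\Psi^{M,N}_{m,n}$ annihilates $D(\la)$; viewing $D(\la)$ as a $\qwb_{m,n-1}$-module through $\iota$, any $x\in\mathrm{ker}\,\Psi^{M,N}_{m,n-1}$ acts on $\resN\, D(\la)$ as $\iota(x)\in\mathrm{ker}\,\Psi^{M,N}_{m,n}$, hence as zero. Thus $\mathrm{ker}\,\Psi^{M,N}_{m,n-1}$ annihilates every composition factor $D(\la')$ of $\resN\, D(\la)$, and by the characterisation above each such $\la'\in\Lambda_{m,n-1}$ is an $(M,N)$-cross bipartition, which is the claim.

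The hard part is not this argument, which is purely formal, but the two inputs it uses. The first is Conjecture~1, which is what transfers the combinatorial description of the radical from the classical walled Brauer algebra --- where it is available, see \cite{Cox2010,CW11} --- to $\qwb_{m,n}$ with generic $\frac{\delta}{\gamma}$. The second is the $(M,N)$-version of Proposition~\bref{crossbip}: the ``if'' direction should follow from the decomposition of the $\qSL{M|N}$ mixed tensor product into $(M,N)$-hook objects as in \cite{CW11}, but the ``only if'' direction --- the one that makes the statement sharp --- requires that $\mathrm{ker}\,\Psi^{M,N}_{m,n}$ be as large as possible, e.g.\ generated by the relevant quasi-idempotents and Jucys--Murphy-type elements in the style of \cite{Stroppel12,Stroppel2014}. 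A purely combinatorial alternative --- expanding $[\resN\, D(\la)]$ in the Grothendieck group using the box rule of Proposition~\bref{restr_S_qwb} and the Kazhdan--Lusztig-type decomposition numbers of \cite{Cox2010}, then checking that no non-cross bipartition survives --- would work in principle, but be considerably more laborious and less transparent.
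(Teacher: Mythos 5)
Your argument is valid given the inputs you name, and it goes beyond the paper: the paper only \emph{states} this as a conjecture, states a second conjecture specialising it to $M=2$, $N=1$, and reports that the latter was checked directly for $m+n\le 8$ --- there is no proof in the paper to compare against. Your Schur--Weyl route is the clean one: the square $\Psi^{M,N}_{m,n}\circ\iota=j\circ\Psi^{M,N}_{m,n-1}$ with $j(\phi)=\phi\tensor\Id_V$ checks on generators using~\bref{qwB-on-chain} (all images of $\EE$, $g_1,\dots,g_{m-1}$, $h_1,\dots,h_{n-2}$ act only on the first $m+n-1$ tensor slots), and well-definedness of $j$ is the usual coproduct-equivariance argument. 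The inclusion $\iota\bigl(\mathrm{ker}\,\Psi^{M,N}_{m,n-1}\bigr)\subseteq\mathrm{ker}\,\Psi^{M,N}_{m,n}$ then shows $\mathrm{ker}\,\Psi^{M,N}_{m,n-1}$ annihilates $\resN D(\la)$ for cross $\la$, hence every simple subquotient, and the completeness half of the $(M,N)$-analogue of Proposition~\bref{crossbip} forces each such subquotient to be labelled by a cross bipartition. Your closing assessment is accurate: all of the real content is in that analogue, especially the direction that makes it sharp, i.e.\ that $\mathrm{ker}\,\Psi^{M,N}_{m,n}$ is as large as \cite{Stroppel12,CW11} make it, modulo the paper's Conjecture~1. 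Granted those inputs, your deduction is complete and considerably more transparent than a direct Grothendieck-group expansion of $\resN D(\la)$ via Proposition~\bref{restr_S_qwb} and the decomposition numbers of~\cite{Cox2010}; the paper, by contrast, offers only numerical evidence for the specialised case.
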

In other words, the restriction functor for $\qwb_{m,n}$ with special parameters preserves the class of all $(M,N)$-cross bipartitions.
In particular we have the next important consequence for $M=2, \, N=1$.
\begin{conj}
For $\la\in\Cr_{m,n}$ the restrictions $\res^{m,n}_{m,n-1} D(\la)$ for simple modules 
over $\qwb_{m,n}$ with $\theta = \frac{\delta}{\gamma}$ are explicitly given by the formulas from 
theorem~\bref{restr_D_A} without any changes.
\end{conj}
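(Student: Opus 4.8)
\emph{Proof strategy.} The plan is to reduce the statement to the already-proved restriction formulas for $\Alg_{m,n}$ in Theorem~\bref{restr_D_A}, exploiting that $\Alg_{m,n}\cong\qwb_{m,n}/\mathrm{ker}\,\Psi^{2,1}_{m,n}$ and that the two towers of algebras are compatible with their restriction embeddings. The one computational ingredient is a commuting square: with $\iota\colon\qwb_{m,n-1}\hookrightarrow\qwb_{m,n}$ the embedding of~\bref{Res_definition} identifying $\EE,g_1,\dots,g_{m-1},h_1,\dots,h_{n-2}$, and $\iota'\colon\Alg_{m,n-1}\to\Alg_{m,n}$ the monomorphism $X\mapsto X\tensor\one_{\bthree}$ (well defined because $\Chain_{m,n}=\Chain_{m,n-1}\tensor\bthree$ and $X\tensor\one_{\bthree}$ is again a $\qSL{2|1}$-module endomorphism whenever $X$ is), one checks on the listed generators that $\Psi^{2,1}_{m,n}\circ\iota=\iota'\circ\Psi^{2,1}_{m,n-1}$. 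Indeed, under $\Psi^{2,1}_{m,n}$ each of these generators acts only inside the first $m+n-1$ tensor slots of $\Chain_{m,n}$, hence equals its $\Psi^{2,1}_{m,n-1}$-image tensored with $\one_{\bthree}$; this is a routine bookkeeping of indices.

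From the square, $\iota$ carries $\mathrm{ker}\,\Psi^{2,1}_{m,n-1}$ into $\mathrm{ker}\,\Psi^{2,1}_{m,n}$, so for $\la\in\Cr_{m,n}$ the ideal $\mathrm{ker}\,\Psi^{2,1}_{m,n-1}$ acts on $\resN D(\la)$ through $\mathrm{ker}\,\Psi^{2,1}_{m,n}$, which annihilates the $\qwb_{m,n}$-simple $D(\la)$ by Proposition~\bref{crossbip}. Hence the $\qwb_{m,n-1}$-action on $\resN D(\la)$ factors through $\Alg_{m,n-1}$, and by the square $\resN D(\la)$ is exactly the restriction of the $\Alg_{m,n}$-simple $D(\la)$ to $\Alg_{m,n-1}$, whose structure is the one recorded in Theorem~\bref{restr_D_A}. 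It then remains only to read that module back as a $\qwb_{m,n-1}$-module: every simple constituent $D(\la')$ appearing in Theorem~\bref{restr_D_A} carries $\la'\in\Cr_{m,n-1}$, and inflation along $\Psi^{2,1}_{m,n-1}$ identifies such an $\Alg_{m,n-1}$-simple with the $\qwb_{m,n-1}$-simple of the same label (inflation along a surjection preserves simplicity, and Proposition~\bref{crossbip} for $(m,n-1)$ makes the two modules $D(\la')$ coincide). Thus $\resN D(\la)$, viewed over $\qwb_{m,n-1}$, is given verbatim by the formulas of Theorem~\bref{restr_D_A}. In particular this recovers the $(M,N)=(2,1)$ instance of the cross-bipartition-preservation conjecture stated just above, and the same scheme would cover general $(M,N)$ once an analogue of Theorem~\bref{restr_D_A} is available. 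If one insists on the statement for every $\qwb_{m,n}$ with $\theta=\delta/\gamma$ rather than only for the parameters~\eqref{ChainNichRules}, one invokes additionally that the module category does not vary along the locus $\theta=\delta/\gamma$, which is part of Conjecture~1~(\bref{conj_1}).

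The main obstacle is that all of this is conditional: Proposition~\bref{crossbip} rests on Conjecture~1 and on the identification of $\mathrm{ker}\,\Psi^{2,1}_{m,n}$ borrowed from \cite{Stroppel12,CW11}, while Theorem~\bref{restr_D_A} uses in addition the quasi-hereditary structure of $\Alg_{m,n}$ (Conjecture~2,~\bref{conj_2}) and the decomposition numbers of \cite{Cox2010}. An unconditional proof would demand an independent handle on $\mathrm{ker}\,\Psi^{2,1}_{m,n}$ --- an explicit presentation of $\Alg_{m,n}$, say, or a direct cell-module analysis of $\qwb_{m,n}$ at $\theta=\delta/\gamma$ --- and the subtle point is to establish that $\mathrm{ker}\,\Psi^{2,1}_{m,n-1}$ acts as zero on the \emph{whole} of $\resN D(\la)$, not merely on each of its composition factors; the weaker, composition-factor-level statement (which is what the preceding conjecture asserts) does not by itself force the module-level one.
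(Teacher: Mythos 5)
The statement you are attempting to prove is a \emph{conjecture} in the paper, and the authors do not prove it: the only evidence offered is that it was ``directly checked for all $\qwb_{m,n}$-modules whenever $m+n \leq 8$,'' a finite computer verification. So there is no paper proof to compare against, and your commuting-square argument, if sound, is a genuine contribution. I believe it \emph{is} sound modulo the paper's standing hypotheses. The square $\Psi^{2,1}_{m,n}\circ\iota = \iota'\circ\Psi^{2,1}_{m,n-1}$ checks out on the generators $\EE,g_1,\dots,g_{m-1},h_1,\dots,h_{n-2}$, which all act only within the first $m+n-1$ tensor slots of $\Chain_{m,n}=\Chain_{m,n-1}\tensor\bthree$; and $\iota'\colon X\mapsto X\tensor\one_{\bthree}$ lands in $\Alg_{m,n}$ by the usual coproduct bookkeeping. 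The square yields $\iota(\mathrm{ker}\,\Psi^{2,1}_{m,n-1})\subseteq\mathrm{ker}\,\Psi^{2,1}_{m,n}$, hence annihilation of \emph{all} of $\resN D(\la)$ by $\mathrm{ker}\,\Psi^{2,1}_{m,n-1}$, which --- as you rightly emphasise in your closing paragraph --- is strictly stronger than the composition-factor statement of the preceding $(M,N)$-conjecture and is exactly the module-level input needed to identify $\resN D(\la)$ with the inflation along $\Psi^{2,1}_{m,n-1}$ of the $\Alg_{m,n-1}$-module computed in Theorem~\bref{restr_D_A}. Since Proposition~\bref{crossbip} and Theorem~\bref{restr_D_A} are themselves conditional on Conjectures~1 and~2 together with \cite{Stroppel12,CW11,Cox2010}, your argument would upgrade this conjecture to a theorem on the same logical footing as those results --- something the paper does not do.

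The one real gap is the passage from the chain parameters $(\gamma,\delta,\theta)=(-1,\q^{-2},-\q^{-2})$, where $\Psi^{2,1}_{m,n}$ actually exists, to an arbitrary point of the locus $\theta=\delta/\gamma$, as the conjecture literally demands. You appeal to Conjecture~1, but as stated that gives only an equivalence of abelian categories, and an abelian equivalence does not automatically transport the tower structure: the restriction functor $\res^{m,n}_{m,n-1}$ carries the extra datum of the algebra embedding $\qwb_{m,n-1}\hookrightarrow\qwb_{m,n}$, which is not intrinsic to the module category. To cover the entire locus one needs the (plausible, but unstated) stronger hypothesis that the equivalences can be chosen compatibly with the embeddings --- or an independent argument, say that $\mathrm{ker}\,\Psi^{2,1}_{m,n}$ deforms flatly in the parameter. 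For the chain parameters themselves your argument is complete modulo the standing conjectures and is, as far as I can tell, correct.
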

This conjecture was directly checked for all $\qwb_{m,n}$-modules whenever $m+n \leq 8$.

\section{The mixed tensor product as a bimodule}\label{sec:bimodule}
We introduce new notation in order to simplify the formula for the bimodule decomposition.

\subsection{Notation}
We introduce the notation $\repZZ^{p}_{t,r}$ for simple $\qSL{2|1}$ modules:
\begin{align*}
  \repZZ^{p}_{t,r} &= \repZ^{1,(-1)^p}_{t+r,r}, \quad r \ne 0,\\
  \repZZ^{p}_{t,0} &= \repZ^{1,(-1)^{p+1}}_{t+1,0}, \quad t\geq 0. 
\end{align*}
We also introduce the notation $\repRR^{p}_{t,r}$ for projective covers of atypical modules $\repZZ^{p}_{t,r}$. Namely, 
\begin{align*}
  \repRR^{p}_{0,r} &= \repR^{1,(-1)^p}_{r,r}, \quad r \geq 1, \\
  \repRR^{p}_{t,0} &= \repR^{1,(-1)^{p+1}}_{t+1,0},  \quad t\geq 0.
\end{align*}
Typical modules $\repZZ^{p}_{t,r}$ coincide with their projective covers, so we do not introduce any new notation for them.
We rewrite the formulas \ref{proj1}--\ref{proj3} in the new notation:
\begin{equation}  
\xymatrix@=20pt{
    && {\repZZ^{p}_{t,0}} \ar[dl]\ar[dr]  
    &\\
	\repRR^{p}_{t,0}=  \hspace{-0.7cm}  
    &  {\repZZ^{p+1}_{t+1,0}}\ar[dr]
    && {\repZZ^{p-1}_{t-1,0}}, \ar[dl]
    \\
    && {\repZZ^{p}_{t,0}}
    &
  }\hspace{1cm}
\xymatrix@=20pt{
    && {\repZZ^{p}_{0,t}} \ar[dl]\ar[dr]  
    &&\\
	\repRR^{p}_{0,t}=  \hspace{-0.7cm}   
    &  {\repZZ^{p+1}_{0,t+1}}\ar[dr]
    && {\repZZ^{p-1}_{0,t-1}}, \ar[dl]
    &\quad t \geq 1,
    \\
    && {\repZZ^{p}_{0,t}}
    &&
  }
\end{equation}
and the exeptional case is 
\begin{equation}  
\xymatrix@=20pt{
    && {\repZZ^{p}_{0,0}} \ar[dl]\ar[dr]  
    &\\
	\repRR^{p}_{0,0}=  \hspace{-0.7cm}  
    &  {\repZZ^{p+1}_{1,0}}\ar[dr]
    && {\repZZ^{p-1}_{0,1}}.\ar[dl]
    \\
    && {\repZZ^{p}_{0,0}}
    &
  }
\end{equation}
Then the dimensions are:
\begin{align*}
\dim\repRR^{p}_{r,0} =& \dim\repRR^{p}_{0,r} = 8r+4, \quad r>0, \\
&\dim\repRR^{p}_{0,0} = 8.
\end{align*}

\subsection{}
The bimodule is a direct sum of subbimodules
\begin{equation}\label{AP-decomp}
  \Chain_{m,n} = T^s_{m,n} \oplus \Chat_{m,n} ,
\end{equation}
where the $T^s_{m,n}$ part is the direct sum of simple $\Alg_{m,n} \boxtimes\qSL{2|1}$-bimodules, and
$\Chat_{m,n}$ is an indecomposable $\Alg_{m,n} \boxtimes\qSL{2|1}$-bimodule. 
Each subquotient in $T^s_{m,n}$ contains a typical $\qSL{2|1}$-module and a typical $\Alg_{m,n}$-module, 
and each subquotient in $\Chat_{m,n}$ contains an atypical $\qSL{2|1}$-module and an atypical $\Alg_{m,n}$-module.
We call $T^s_{m,n}$ the semisimple part and $\Chat_{m,n}$
the atypical part. 

\subsubsection{Examples}
Before giving a general formula for the decomposition of $\Chain_{m,n}$ in~\bref{bimodulethm}, we illustrate the structure of the semisimple part $T^s_{m,n}$ with two examples. 
$T^s_{m,n}$ has the structure
\begin{align}\label{eq:s-part}
  T^s_{m,n}=\bigoplus_{t,r} D(\la_{m,n}(t,r)) \boxtimes 
  \repZZ^{p(t,r)}_{t,r}
\end{align}
For given $m$, $n$, we represent the sum in~\eqref{eq:s-part} as a
table of bipartitions $\la_{m,n}(t,r)$ in coordinates $(t,r)$.
All parts of the sum outside the table vanish, and $0$ in the table means that the corresponding submodule in~\eqref{eq:s-part} vanishes.

For $m=5$ and $n=3$, the table of bipartitions $\la_{5,3}(t,r)$ reads
\begin{align*}
     &  \hspace{0cm} \tiny 
       \begin{array}{|c|c|c|c|c|c|c|} \hline
         r=5 & 0 & (\yng(1,1,1,1,1),\yng(3))\vspace{3pt} &  0 & (\yng(3,1,1),\yng(3)) & (\yng(4,1),\yng(3)) & (\yng(5),\yng(3)) \\ \hline
         r=4 & (\yng(1,1,1,1,1),\yng(2,1))\vspace{3pt} & (\yng(1,1,1,1),\yng(2)) & 
                      0 & (\yng(3,1),\yng(2)) & (\yng(4),\yng(2)) & (\yng(5),\yng(2,1)) \\ \hline
         r=3 & (\yng(1,1,1,1),\yng(1,1))\vspace{3pt} & (\yng(1,1,1),\yng(1)) & 0 & (\yng(3),\yng(1)) & (\yng(4),\yng(1,1)) & (\yng(5),\yng(1,1,1))  \\ \hline
         r=2 & 0& (\yng(1,1),\emp) & 0 & (\yng(3,1),\yng(1,1)) & (\yng(4,1),\yng(1,1,1))\vspace{3pt} & 0  \\ \hline
         r=1 & 0 & 0 & 0 & (\yng(3,2),\yng(1,1,1))\vspace{3pt} & 0 & 0 \\ \hline
             & t=-2 & t=-1 & t=0 & t=1 & t=2 & t=3  \\ \hline
       \end{array}
\end{align*}\\[0cm]
For $m=4$ and $n=4$, the table of bipartitions $\la_{4,4}(t,r)$ reads  
\begin{align*}
& \hspace{0cm} \tiny 
\begin{array}{|c|c|c|c|c|c|c|} \hline
 \text{r=4} & 0 & 0 & (\yng(1,1,1,1),\yng(4))\vspace{3pt} & (\yng(2,1,1),\yng(4)) & (\yng(3,1),\yng(4)) & (\yng(4),\yng(4)) \\ \hline
 \text{r=3} & 0 & 0 & (\yng(1,1,1),\yng(3))\vspace{3pt} & (\yng(2,1),\yng(3)) & (\yng(3),\yng(3)) & (\yng(4),\yng(3,1)) \\ \hline
 r=2 & (\yng(1,1,1,1),\yng(2,2))\vspace{3pt} & 0 & (\yng(1,1),\yng(2)) & (\yng(2),\yng(2)) & (\yng(3),\yng(2,1)) & (\yng(4),\yng(2,1,1))  \\ \hline
 \text{r=1} & 0 & 0 & (\yng(1),\yng(1)) & (\yng(2),\yng(1,1)) & (\yng(3),\yng(1,1,1)) & (\yng(4),\yng(1,1,1,1))\vspace{3pt}  \\ \hline
 \text{r=0} & 0 & 0 & 0 & 0 & 0 & 0 \\ \hline
 \text{r=-1} & 0 & 0 & 0 & (\yng(2,2),\yng(1,1,1,1))\vspace{3pt} & 0 & 0  \\ \hline
  & t=-1 & t=0 & t=1 & t=2 & t=3 & t=4  \\ \hline
\end{array}
\end{align*}

\subsubsection{}
In the next Theorem, we give explicit formulas for the decomposition
of $\Chain_{m,n}$ for $m\geq n$; the case $m<n$ can be easily
recovered from $m>n$ using operation $\operS$ interchanging $m$ with $n$
\begin{equation*}
  \Chain_{n,m}=\operS \Chain_{m,n}.
\end{equation*}
The operation is involutive, $\operS^2=1$, and additive,
$\operS(X\oplus Y)=\operS(X)\oplus \operS(Y)$. 
It acts on the indecomposable summands in the semisimple part $T^s_{m,n}$ by the formula
\begin{equation}\label{opS2}
  \operS \Bigl( D\big[\la^L,\la^R \big] \boxtimes \repZZ_{t,r}^{p} \Bigr)=
  \operS \Bigl( D\big[\la^L,\la^R\big] \Bigr) \boxtimes \operS \Bigl(  \repZZ_{t,r}^{p} \Bigr),
\end{equation}
where the action $\operS \Bigl( D\big[\la^L,\la^R\big] \Bigr)$ is defined in (\ref{opS1}) and 
\begin{equation}
  \operS \repZZ_{t,r}^{p} = \repZZ_{r,t}^{p}.
\end{equation}
When applied to the atypical part $\Chat_{m,n}$, the operation $\operS$ acts on each simple subquotient by the formula (\ref{opS2}) and does not change the structure of the Loewy graph.

\begin{Thm}\label{bimodulethm}
  The $\Alg_{m,n} \boxtimes\qSL{2|1}$-bimodule decomposition of 
  $\Chain_{m,n}$, $m\geq n$, has the form
  $\Chain_{m,n} = T^s_{m,n} \oplus \Chat_{m,n}$
  with the semisimple part 
  \begin{description}\addtolength{\itemsep}{6pt}
  \item[$m> n$] 
    \begin{align*}
      T^s_{m,n}=
      &\bigoplus_{s=1}^{n}\bigoplus_{\substack{k=1,\\k\neq a}}^{a+s} D\big[(k,1^{s-k+a}),\, (s) \big]\boxtimes \repZZ^{s+k+a+1}_{k-a,s+a}\oplus      \\
      &\bigoplus_{s=a+2}^{m}\bigoplus_{k=1}^{s-a-1} D\big[(s),\,(k,1^{s-k-a})\big]\boxtimes
        \repZZ^{s+k+a+1}_{s-a,k+a}\oplus	 \\
      &\bigoplus_{s=1}^{n-1}\bigoplus_{k=1}^{\min(s,\,n-s)} D\big[(1^{s+k+a}),\,(s,k)\big]\boxtimes 	\repZZ^{s+k+a}_{1-k-a,s+a}\oplus      \\
      &\bigoplus_{s=a+1}^{m-1}\bigoplus_{\substack{k=1,\\k\neq a+1}}^{\min(s,\,m-s)} D\big[(s,k),\,(1^{s+k-a})\big]\boxtimes \repZZ^{s+k+a}_{s-a,1-k+a}\oplus     \\
      &\bigoplus_{k=1}^{\lfloor\frac{a}{2}\rfloor}\bigoplus_{s=k}^{a-k} D\big[(s,k, 1^{a-s-k}),\,\emp \big]\boxtimes
        \repZZ^{s+k+a}_{s-a,1-k+a}\oplus    \\
      &\bigoplus_{s=\lfloor\frac{a}{2}\rfloor + 1}^{a-1}\bigoplus_{k=1-s+a}^{\min(s,\,m-s)} D\big[(s,k),
        \,  (1^{s+k-a})\big]\boxtimes
        \repZZ^{s+k+a}_{s-a,1-k+a},
    \end{align*}
    \item[$m=n$]
    \begin{align*}
      T^s_{m,m}=
      &\bigoplus_{s=1}^{m}\bigoplus_{k=1}^{s} D\big[(k,1^{s-k}),\,(s)\big]\boxtimes
        \repZZ^{s+k+1}_{k,s}\oplus
      \\
      &\bigoplus_{s=2}^{m}\bigoplus_{k=1}^{s-1} D\big[(s),\,(k,1^{s-k})\big]\boxtimes
        \repZZ^{s+k+1}_{s,k}\oplus
      \\[2mm]
      &\bigoplus_{s=2}^{m-1}\bigoplus_{k=2}^{\min(s,\,m-s)} 
      D\big[(1^{s+k}),\,(s,k)\big]\boxtimes
        \repZZ^{s+k}_{1-k,s}\oplus\\
      &\bigoplus_{s=1}^{m-1}\bigoplus_{k=2}^{\min(s,\,m-s)} D\big[(s,k),\,(1^{s+k})\big]\boxtimes
        \repZZ^{s+k}_{s,1-k},
    \end{align*}
  \end{description}
and the atypical part $\Chat_{m,n}$ is given by figures \ref{R1}--\ref{R5} in Appendix \bref{Bimodule}.\\
\end{Thm}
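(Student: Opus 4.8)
The plan is to prove Theorem~\ref{bimodulethm} by induction, reducing the case $m<n$ to $m\ge n$ via $\Chain_{n,m}=\operS\Chain_{m,n}$ together with the additivity of $\operS$ and formula~\eqref{opS2}. Fixing $m$, I would induct on $n$ over $0\le n\le m$, using at the inductive step the factorization $\Chain_{m,n}=\Chain_{m,n-1}\tensor\bthree$ and transporting the (inductively known) decomposition of $\Chain_{m,n-1}$ through this tensor product. The base case $n=0$ is $\three^{\tensor m}$: one checks directly (by a subsidiary induction on $m$ using Theorem~\ref{tp-decomp}) that this module is semisimple over $\qSL{2|1}$ and that $\operatorname{End}_{\qSL{2|1}}(\three^{\tensor m})$ is a semisimple quotient of $\Hecke_m$, so the bimodule is a direct sum --- its semisimple part the classical Schur--Weyl one and its atypical part a single simple bimodule --- which one matches against the $n=0$ specialisation of the formulas of Theorem~\ref{bimodulethm} and of figures~\ref{R1}--\ref{R5}.

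For the inductive step, first regard $\Chain_{m,n}$ as an $\Alg_{m,n-1}\boxtimes\qSL{2|1}$-bimodule via the embedding $\qwb_{m,n-1}\hookrightarrow\qwb_{m,n}$ from \S\ref{Res_definition}, so that $\resN\Chain_{m,n}\cong\bigl(\resN\Chain_{m,n-1}\bigr)\tensor\bthree$, with $\Alg_{m,n-1}$ acting on the first tensorand. Inserting the inductive hypothesis for $\Chain_{m,n-1}$ and applying Theorem~\ref{tp-decomp} to each $\qSL{2|1}$-constituent against $\bthree=\repZ^{1,1}_{2,0}$ writes $\Chain_{m,n}$ explicitly as a sum of bimodule pieces over $\Alg_{m,n-1}\boxtimes\qSL{2|1}$, the $\qSL{2|1}$-side now expressed through the new generating modules $\repZZ^{p}_{t,r}$ and their covers $\repRR^{p}_{t,r}$. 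Second, recognise the $\Alg_{m,n}$-action: Theorems~\ref{restr_D_A} and~\ref{restr_K_A} give the restriction to $\Alg_{m,n-1}$ of every candidate summand $D(\mu)\boxtimes(\cdot)$, $K(\mu)\boxtimes(\cdot)$ occurring in the asserted $\Chain_{m,n}$, and matching these against the first computation pins down which $\Alg_{m,n}$-constituents occur and with which $\qSL{2|1}$-partners. The projective constituents are determined unambiguously, since each $\resN K(\mu)$ contains a single indecomposable projective summand from which $\mu$ is recovered; the bookkeeping of which $D(\la')$ survive $\ker\Psi^{2,1}_{m,n}$ is controlled by Propositions~\ref{crossbip} and~\ref{restr_S_A}; and the identity $\dim\Chain_{m,n}=3^{m+n}$, with the dimension formulas of Theorem~\ref{K_structure} and \S\ref{sec:bimodule} and the involutivity $\operS^2=1$, serves as a running consistency check.

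It then remains to upgrade this additive and Loewy data to the genuine bimodule structure~\eqref{AP-decomp}: that $\Chat_{m,n}$ is indecomposable with exactly the graphs of figures~\ref{R1}--\ref{R5}. Here I would use the two \emph{one-sided} structures the induction already supplies: as a $\qSL{2|1}$-module $\Chat_{m,n}$ is a sum of projective covers $\repRR^{p}_{t,r}$ and atypical simples (by iterated Theorem~\ref{tp-decomp}), while as an $\Alg_{m,n}$-module it is a sum of the $K(\la)$, $\la\in\at_{m,n}$, and atypical simples (by Theorems~\ref{K_structure} and~\ref{restr_K_A}). The underlying oriented linkage graph is connected, and each of its edges records a one-dimensional $\Exti$ on \emph{both} sides --- on the $\qSL{2|1}$ side from the list in \S\ref{sec:ext}, on the $\Alg_{m,n}$ side from Theorem~\ref{K_structure} --- realised by explicit nonzero maps, namely the images of the generators $g_i,h_j,\EE$ of $\qwb_{m,n}$ and of the fermions $B,C$; one checks these remain non-split in $\Chain_{m,n-1}\tensor\bthree$. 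A connected module all of whose linkage edges carry non-split extensions has no nontrivial idempotent, so $\Chat_{m,n}$ is indecomposable, and the splitting-off of the semisimple part $T^s_{m,n}$ follows because $\Exti$ between typical and atypical modules vanishes on both sides.

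The hard part is exactly this last step together with the uniqueness in the recognition: one must show that the $\Alg_{m,n}\boxtimes\qSL{2|1}$-structure on $\Chain_{m,n}$ is determined by its two one-sided restrictions plus the ambient dimension, and that none of the relevant extensions degenerates under the induction --- both require keeping track of the explicit action of the centralizer generators, not merely of characters, and this is where most of the labour lies. One unavoidable caveat is that the whole argument rests on the quasi-hereditarity of $\qwb_{m,n}$ and of $\Alg_{m,n}$ (Conjectures~\ref{conj_1} and~\ref{conj_2}), entering through Theorems~\ref{K_structure}, \ref{restr_K_A} and~\ref{restr_D_A}, so the decomposition is established only modulo those conjectures.
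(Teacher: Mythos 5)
Your outline follows the same inductive strategy the paper itself employs in \S5.3 (``Verification''): relate $\Chain_{m,n+1}$ to $\Chain_{m,n}\tensor\bthree$, use Theorem~\ref{tp-decomp} on the $\qSL{2|1}$ side and the restriction functors of Theorems~\ref{restr_D_A} and~\ref{restr_K_A} on the $\Alg_{m,n}$ side, and match against the candidate decomposition, all under the standing hypotheses of Conjectures~1 and~2. The place where the paper and your proposal diverge is in how far this is pushed. The paper explicitly treats its argument as a consistency check rather than a proof: it introduces the Grothendieck (forgetful) functors $\ppp$ and $\qqq$, verifies only the two relations \eqref{Ch-induction1} and \eqref{Ch-induction2} --- which test the one-sided $\qSL{2|1}$- and $\Alg_{m,n}$-module structures of $\Chat_{m,n}$ but not how they glue into a genuine bimodule --- and reports that the check was carried out for $m+n\le 25$; the introduction accordingly calls this only ``a sketch of a proof.'' You go further: you supply a base case ($n=0$, where $\three^{\tensor m}$ is $\qSL{2|1}$-semisimple and the centralizer is a semisimple Hecke quotient), and you attempt the final indecomposability step for $\Chat_{m,n}$ via connectedness of the linkage graph together with non-degeneration of the edge extensions. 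That last step is precisely what the paper leaves implicit in figures~\ref{R1}--\ref{R5}, and you are right to flag it as the genuine labour: showing the two one-sided module structures plus the ambient dimension determine the bimodule, and that no extension splits after tensoring, is not done in the paper either. So your route is structurally the paper's, but more complete in intent; both share the same open ends and the same conjectural dependence, which you correctly identify.
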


\subsection{Verification}
To check the decomposition formula for the bimodule
 we make two powerful verifications using formulas for
tensor product decompositions for $\qSL{2|1}$ modules and restrictions for
$\Alg_{m,n}$ modules. We check that $\Chain_{m,n}\tensor\bthree$ coincides
with $\res^{m,n+1}_{m,n}\, \Chain_{m,n+1}$ as $\qSL{2|1}$-module in the first verification and as $\Alg_{m,n}$-module in the second one. 
In order to do this we introduce two Grothendieck
(forgetful) functors $\ppp$ and $\qqq$.

We define the Grothendieck  functor $\ppp$ on the category of $\qSL{2|1}$-modules which maps an indecomposable module into a direct sum of its simple 
subquotients. 
The functor $\ppp$ on any $\qSL{2|1}$-module is known from
\bref{projective}. For example
\begin{align*}
\ppp \repRR^{p}_{t,0} &= 2\repZZ^{p}_{t,0} \oplus 
    \repZZ^{p+1}_{t+1,0}\oplus \repZZ^{p-1}_{t-1,0}, 
\quad t \geq 1, \\
\ppp \repZZ^{p}_{t,r} &= \repZZ^{p}_{t,r}, \quad \forall p,t,r.
\end{align*}
We define the other Grothendieck  functor $\qqq$ on the category of 
$\Alg_{m,n}$ modules which maps an indecomposable module into a direct sum of its
simple subquotients. The functor $\qqq$ on any $\Alg_{m,n}$-module is known
from \bref{K_structure}. For example 
\begin{align*}
&\qqq K(\Da{1}{1}) = 2D(\Da{1}{1}) 
\oplus D(\Da{1}{2}) \oplus D(\Da{1}{0}) \oplus D(\Dc{1}{1}), \quad n\geq 3,\\
&\qqq D(\la) = D(\la), \quad \forall m,n,\la.
\end{align*}
The functors $\ppp$ and~$\qqq$ do not change semisimple part of the bimodule:
\begin{equation*}
\ppp T^s_{m,n} = \qqq T^s_{m,n} = T^s_{m,n},
\end{equation*}
because semisimple part is a direct sum of simple bimodules.

\subsubsection{As $\qSL{2|1}$ module}
The action of $\qqq$ on the atypical part $\Chat_{m,n}$ has the form 
\begin{description}\addtolength{\itemsep}{6pt}
  \item[$m> n$] 
    \begin{align*}
 \qqq \Chat_{m,n} =
  \bigoplus_{s=1}^{n} & D(\Da{a}{s}) \boxtimes \repRR^{s}_{0,a+s-1} \oplus \\
  \bigoplus_{s=2}^{\min(a,\,n)} & D(\Db{a}{s}) \boxtimes \repRR^{s}_{0,a-s+1} \oplus \\
  \bigoplus_{s=a}^{n-2} & D(\Dc{a}{s}) \boxtimes \repRR^{s+1}_{s-a,0} \oplus \\
  & D(\Da{a}{0}) \boxtimes \repZZ^{1}_{0,a},
    \end{align*}
  \item[$m=n$]
    \begin{align*}
  \qqq \Chat_{m,m} =
  &\bigoplus_{s=1}^{m-2} D(\Dcc{0}{s}) \boxtimes \repRR^{s-1}_{0,s} \oplus \\
  &\bigoplus_{s=0}^{m-2} D(\Dc{0}{s}) \boxtimes \repRR^{s-1}_{s,0} \oplus \\
  & D(\Da{0}{0}) \boxtimes \repZZ^{1}_{0,0}.
    \end{align*}
\end{description}
We introduce the notation
$\overline{\Chain_{m,n}} = \qqq \Chain_{m,n}$. 
The following relation must hold:
\begin{equation}\label{Ch-induction1}
\overline{\Chain_{m,n}} \tensor \bthree = \qqq \, \res^{m,n+1}_{m,n}\, \overline{\Chain_{m,n+1}}.  
\end{equation}
Because $\overline{\Chain_{m,n}}$ has the form 
$\overline{\Chain_{m,n}} = \bigoplus D \boxtimes \repR \,\, \bigoplus\,\, D \boxtimes \repZ$, we can calculate 
$\overline{\Chain_{m,n}} \tensor \bthree$ 
using formulas from~\bref{tp-decomp}.
Because $\overline{\Chain_{m,n+1}}$ contains as subquotients only modules $D(\la)$ for $\la \in \Cr_{m,n+1}$, we can calculate $\res^{m,n+1}_{m,n}\, \overline{\Chain_{m,n+1}}$
using formulas from~\bref{restr_D_A}, and then apply the functor $\qqq$. We have
checked the validity of relation \ref{Ch-induction1} for all $m,n$ whenever
$m+n \leq 25$.

\subsubsection{As $\Alg_{m,n}$ module}
The action of $\ppp$ on the atypical part $\Chat_{m,n}$ has the form 
\begin{description}\addtolength{\itemsep}{6pt}
  \item[$m> n$] 
    \begin{align*}
 \ppp \Chat_{m,n} =
  \bigoplus_{s=1}^{n} & K(\Da{a}{s}) \boxtimes \repZZ^{s}_{0,a+s-1} \oplus \\
  \bigoplus_{s=2}^{\min(a,\,n)} & K(\Db{a}{s}) \boxtimes \repZZ^{s}_{0,a-s+1} \oplus \\
  \bigoplus_{s=a}^{n-2} & K(\Dc{a}{s}) \boxtimes \repZZ^{s+1}_{s-a,0} \\
  \oplus & D(\Da{a}{n}) \boxtimes \repZZ^{n+1}_{0,m} \\
  \oplus & PT^{\text{right}}_{m,n},
    \end{align*}
   where
\begin{equation*}
PT^{\text{right}}_{m,n} = 
\begin{cases}
	0, \qquad n=0, \\
	D(\Db{a}{n}) \boxtimes \repZZ^{n+1}_{0,m-2n}, \qquad 1 \leq n \leq \frac{m}{2}, \\
	D(\Db{a}{a}) \boxtimes \repZZ^{a+1}_{0,0}, \qquad n= \frac{m+1}{2}, 
	\quad n \geq 2, \\
	D(\Dc{a}{n-2}) \boxtimes \repZZ^{n}_{2n-m-1,0}, \qquad 
		\frac{m}{2}+1 \leq n \leq m-1, 
\end{cases}
\end{equation*}    
  \item[$m=n \geq 2$]
    \begin{align*}
  \ppp \Chat_{m,m} =
  &\bigoplus_{s=1}^{m-2} K(\Dcc{0}{s}) \boxtimes \repZZ^{s-1}_{0,s} \oplus \\
  &\bigoplus_{s=0}^{m-2} K(\Dc{0}{s}) \boxtimes \repZZ^{s-1}_{s,0} \\
  &\oplus  D(\Dcc{0}{m-2}) \boxtimes \repZZ^{m-2}_{0,m-1}\\ 
  &\oplus D(\Dc{0}{m-2}) \boxtimes \repZZ^{m-2}_{m-1,0},
    \end{align*}
   \item[$m=n=1$]
\begin{equation*}
	\ppp \Chat_{1,1} = D(\Da{0}{0}) \boxtimes \repZZ^{1}_{0,0}.
\end{equation*}
\end{description}
We introduce the notation
$\underline{\Chain_{m,n}} = \ppp \Chain_{m,n}$. 
The following relation must hold:
\begin{equation}\label{Ch-induction2}
\ppp \, (\underline{\Chain_{m,n}} \tensor \bthree) = \res^{m,n+1}_{m,n}\, \underline{\Chain_{m,n+1}}.  
\end{equation}
Because $\underline{\Chain_{m,n}}$ has the form 
$\underline{\Chain_{m,n}} = \bigoplus K \boxtimes \repZ \,\, \bigoplus\,\, D \boxtimes \repZ$, we can calculate 
$\underline{\Chain_{m,n}} \tensor \bthree$ 
using formulas from~\bref{tp-decomp}.
Because $\underline{\Chain_{m,n+1}}$ contains as subquotients only modules $K(\la)$ and $D(\la)$ for $\la \in \Cr_{m,n+1}$, we can calculate $\res^{m,n+1}_{m,n}\, \underline{\Chain_{m,n+1}}$
using formulas from~\bref{restr_D_A} and \bref{restr_K_A} and then apply the
functor $\ppp$. We have checked the validity of relation \ref{Ch-induction2}
for all $m,n$ whenever $m+n \leq 25$.

\section{Conclusion}
In the present work we have studied the $\qSL{2|1}$ mixed tensor product and found
its decomposition as a bimodule over $\Alg_{m,n}\boxtimes\qSL{2|1}$. 
These results are the basis for a further study of the $\qSL{2|1}$-spin-chain and
appropriate LCFT.

The next step is studying the mixed tensor product with parameter $\q$ at the root of
unity. We expect the appearance of the Lusztig limit of algebra $\qSL{2|1}$ in
that case. We anticipate that $\Alg_{m,n}$ will remain the centralizer of
$\mathscr{L}\qSL{2|1}$ on the mixed tensor product and some triplet extension of
$\Alg_{m,n}$ will be the centralizer of $\qSL{2|1}$.

Natural ways for further developments of the results presented in this paper:
\begin{enumerate}
\item
Describe explicitly the algebra $\Alg_{m,n}$ and identify it with some
quotient of $\qwb_{m,n}$. 
Similar problem is posed the algebras $\Alg^{M,N}_{m,n}$ of
$\qSL{M|N}$-endomorphisms. 
\item
Describe the structure of Specht and projective $\Alg^{M,N}_{m,n}$-modules
and perhaps $\qwb_{m,n}$-modules. This problem becomes significantly more
complicated when parameter $\q$ is a root of unity.
\item
Figure out the restriction functor on all simple and projective modules
of the algebra $\qwb_{m,n}$.
\item
Classify $\Exti$ spaces for modules over the algebra $\Alg_{m,n}$ and describe
explicitly the action of $\Alg_{m,n}$-generators on the basis of projective
modules $K(\la)$. The solution to this problem will allow one to describe explicitly the
$\Alg_{m,n}$-action in the bimodule $\Chain_{m,n}$. 
\end{enumerate}

\subsection*{Acknowledgments}
We thank A.~Davydov, B.~Feigin, S.~Lentner, and
H.~Saleur for useful discussions and suggestions. 
We thank A.~Semikhatov for considerable contribution to the work
on its early stage.   
We are grateful to
A.~Gainutdinov for careful reading the draft of the paper and useful
suggestions. 
DB offers special thanks to A.~Elishev for careful reading of the manuscript and helpful comments.
DB thanks IPhT Saclay for hospitality, and support from the ERC Advanced Grant NuQFT.
The work of AK was supported in part by Dynasty Foundation and the LPI Educational-Scientific Complex.

\newpage
\appendix
\section{Atypical part of the bimodule}\label{Bimodule}
In this section we represent the structure of Loewy graph for the indecomposable bimodule $\Chat_{m,n}$, see \bref{AP-decomp}.
Detailed investigation of $\Alg_{m,n}$ action on these bimodules are beyound the scope of this paper. See paper \cite{AzatVasseur2013}, where the spin chain over $\qSL{2}$ is investigated for comparison.

In each vertex of the graph there is some subquotient
$D(\la) \boxtimes \repZZ_{t,r}^{p}$. 
The meaning of the arrows is the same as in \bref{K_structure}. 
On the figures the action of algebra $\qSL{2|1}$ is denoted by solid lines, and the action of $\Alg_{m,n}$ is denoted by dash lines.

The subquotients connected by dash lines have the same $\qSL{2|1}$ module as a tensor multiplier.
The subquotients connected by solid lines have the same $\Alg_{m,n}$ module as a tensor multiplier.
To simplify the figures we omit $\Alg_{m,n}$ multiplier where it does not cause inconsistency. We also do not write symbol $D$ each time, and write only $\la$ for simple module $D(\la)$.

For example, the bimodule for $\Chat_{3,2}$ is
\begin{equation}
\xymatrix@R=18pt@C=1pt@W=0pt@M=0.5pt{
    & D(\Da{1}{2}) \boxtimes \repZZ^{2}_{0,2} \ar[dl]\ar[dr]\myar[drrr]  
    &&&& D(\Da{1}{1})\boxtimes \repZZ^{1}_{0,1} \ar[dl]\ar[dr]\myar[d]\myar[dlll]
    &
    \\    
    D(\Da{1}{2}) \boxtimes \repZZ^{3}_{0,3} \hspace{3pt} \ar[dr]
    && D(\Da{1}{2}) \boxtimes \repZZ^{1}_{0,1} \ar[dl]\myar[drrr]
    &\hspace{1cm}
    & D(\Da{1}{1})\boxtimes \repZZ^{2}_{0,2}  \ar[dr]\myar[dlll]
    & \red{D(\Da{1}{0})\boxtimes \repZZ^{1}_{0,1}} \myar[d]
    & D(\Da{1}{1})\boxtimes \repZZ^{2}_{0,0} \ar[dl]
    \\
	& D(\Da{1}{2}) \boxtimes \repZZ^{2}_{0,2}
    &&&& D(\Da{1}{1})\boxtimes \repZZ^{1}_{0,1}
    &
  }
\end{equation}
We use shorthand notation for $\Chat_{3,2}$:
\begin{equation}
\xymatrix@R=18pt@C=1pt@W=0pt@M=0.5pt{
    & \Da{1}{2}\boxtimes \repZZ^{2}_{0,2} \ar[dl]\ar[dr]\myar[drrr]  
    &&&& \Da{1}{1}\boxtimes \repZZ^{1}_{0,1} \ar[dl]\ar[dr]\myar[d]\myar[dlll]
    &
    \\    
    \repZZ^{3}_{0,3} \hspace{3pt} \ar[dr]
    && \repZZ^{1}_{0,1} \ar[dl]\myar[drrr]
    &\hspace{1cm}
    & \repZZ^{2}_{0,2}  \ar[dr]\myar[dlll]
    & \red{\Da{1}{0}\boxtimes \repZZ^{1}_{0,1}} \myar[d]
    & \repZZ^{2}_{0,0} \ar[dl]
    \\
	& \repZZ^{2}_{0,2}
    &&&& \repZZ^{1}_{0,1}
    &
  }
\end{equation}
We mark in red the subquotient where the figure has irregular form.\\
The structure of $\Chat_{m,n}$ for the case $1 \leq n \leq \frac{m}{2}$ is shown in figure~\ref{R1}.\\
The case $\frac{m}{2}+1  \leq n \leq  m-2$ is shown in figure~\ref{R2}.\\
The case $n=\frac{m+1}{2}, n \geq 2$ is shown in figure~\ref{R3}.\\
The case $n=m-1, n \geq 1$ is shown in figure~\ref{R4}.\\
The case $n=m, n \geq 2$ is shown in figure~\ref{R5}.\\
Two exceptional cases are:
\begin{align*}
\Chat_{m,0} &= D(\Da{m}{0}) \boxtimes \repZZ^{1}_{0,m}, \\
\Chat_{1,1} &= D(\Da{0}{0}) \boxtimes \repZZ^{1}_{0,0}. 
\end{align*}


\newpage
\begin{landscape}
\thispagestyle{empty}
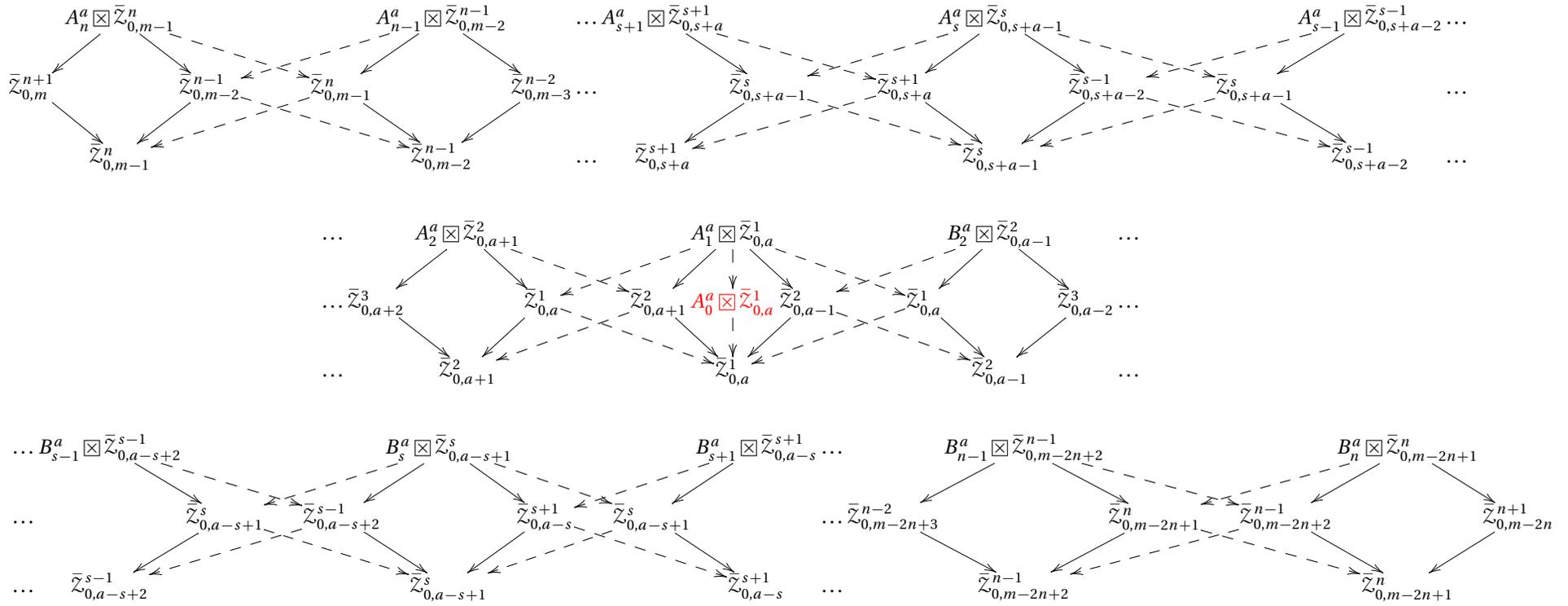
\begin{figure} 
{\footnotesize
\centering
\begin{multline*}
\hspace{-1cm}
 \xymatrix@R=18pt@C=1pt@W=0pt@M=0.5pt{
    &&\Da{a}{n} \boxtimes \repZZ^{n}_{0,m-1} \ar[dl]\ar[dr]\myar[drrr]  
    &&&& \Da{a}{n-1} \boxtimes \repZZ^{n-1}_{0,m-2} \ar[dl]\ar[dr]\myar[dlll]	
    &&
    \dots
    & \Da{a}{s+1}\boxtimes \repZZ^{s+1}_{0,s+a} \ar[dr]\myar[drrr]  
    &&&& \Da{a}{s} \boxtimes \repZZ^{s}_{0,s+a-1} \ar[dl]\ar[dr]\myar[dlll]\myar[drrr]	
    &&&& \Da{a}{s-1}\boxtimes \repZZ^{s-1}_{0,s+a-2} \ar[dl]\myar[dlll]
    & \dots
    \\    
    &\repZZ^{n+1}_{0,m} \hspace{3pt} \ar[dr]
    && \repZZ^{n-1}_{0,m-2} \ar[dl]\myar[drrr]
    &\hspace{1cm}
    & \repZZ^{n}_{0,m-1}  \ar[dr]\myar[dlll]
    && \repZZ^{n-2}_{0,m-3} \ar[dl]
    &
    \dots
    && \repZZ^{s}_{0,s+a-1} \ar[dl]\myar[drrr]
    &\hspace{1cm}
    & \repZZ^{s+1}_{0,s+a}  \ar[dr]\myar[dlll]
    & 
    & \repZZ^{s-1}_{0,s+a-2} \ar[dl]\myar[drrr]
    &\hspace{1cm}
    &\repZZ^{s}_{0,s+a-1} \ar[dr]\myar[dlll]
    &&\dots
    \\
    && \repZZ^{n}_{0,m-1}
    &&&& \repZZ^{n-1}_{0,m-2}
    &&
    \dots
    & \repZZ^{s+1}_{0,s+a}
    &&&& \repZZ^{s}_{0,s+a-1}
    &&&& \repZZ^{s-1}_{0,s+a-2}
    & \dots
  }\\\mbox{}\\
 \xymatrix@R=18pt@C=1pt@W=0pt@M=0.5pt{
    \dots&
    & \Da{a}{2}\boxtimes \repZZ^{2}_{0,a+1} \ar[dl]\ar[dr]\myar[drrr]  
    &&&& \Da{a}{1}\boxtimes \repZZ^{1}_{0,a} \ar[dl]\ar[dr]\myar[d]\myar[dlll]\myar[drrr]	
    &&&& \Db{a}{2}\boxtimes \repZZ^{2}_{0,a-1} \ar[dl]\ar[dr]\myar[dlll]
    && \dots
    \\    
    \dots
    &\repZZ^{3}_{0,a+2} \hspace{3pt} \ar[dr]
    && \repZZ^{1}_{0,a} \ar[dl]\myar[drrr]
    &\hspace{1cm}
    & \repZZ^{2}_{0,a+1}  \ar[dr]\myar[dlll]
    & \red{\Da{a}{0}\boxtimes \repZZ^{1}_{0,a}} \myar[d]
    & \repZZ^{2}_{0,a-1} \ar[dl]\myar[drrr]
    &\hspace{1cm}
    &\repZZ^{1}_{0,a} \ar[dr]\myar[dlll]
    && \repZZ^{3}_{0,a-2} \ar[dl]
    & \dots
    \\
    \dots
    && \repZZ^{2}_{0,a+1}
    &&&& \repZZ^{1}_{0,a}
    &&&& \repZZ^{2}_{0,a-1}
    && \dots
  }\\\mbox{}\\
\hspace{-0.5cm}
 \xymatrix@R=18pt@C=1pt@W=0pt@M=0.5pt{
    \dots
    & \Db{a}{s-1} \boxtimes \repZZ^{s-1}_{0,a-s+2} \ar[dr]\myar[drrr]  
    &&&& \Db{a}{s}\boxtimes \repZZ^{s}_{0,a-s+1} \ar[dl]\ar[dr]\myar[dlll]\myar[drrr]	
    &&&& \Db{a}{s+1}\boxtimes \repZZ^{s+1}_{0,a-s} \ar[dl]\myar[dlll]
    & \dots
	&& \Db{a}{n-1} \boxtimes \repZZ^{n-1}_{0,m-2n+2} \ar[dl]\ar[dr] \myar[drrr]
	&&&& \Db{a}{n}\boxtimes \repZZ^{n}_{0,m-2n+1} \ar[dl] \ar[dr]\myar[dlll] &
    \\    
    \dots
    && \repZZ^{s}_{0,a-s+1} \ar[dl]\myar[drrr] 
    &\hspace{0.5cm}
    & \repZZ^{s-1}_{0,a-s+2}  \ar[dr]\myar[dlll]
    & 
    & \repZZ^{s+1}_{0,a-s} \ar[dl]\myar[drrr]
    &\hspace{0.5cm}
    &\repZZ^{s}_{0,a-s+1} \ar[dr]\myar[dlll]
    && \dots
	& \repZZ^{n-2}_{0,m-2n+3} \ar[dr]
	&& \repZZ^{n}_{0,m-2n+1} \ar[dl]\myar[drrr]
	& \hspace{0.5cm}
	& \repZZ^{n-1}_{0,m-2n+2} \ar[dr]\myar[dlll]  
    && \repZZ^{n+1}_{0,m-2n} \ar[dl]  
    \\
    \dots
    & \repZZ^{s-1}_{0,a-s+2}
    &&&& \repZZ^{s}_{0,a-s+1}
    &&&& \repZZ^{s+1}_{0,a-s}
    & \dots
    && \repZZ^{n-1}_{0,m-2n+2}
    &&&& \repZZ^{n}_{0,m-2n+1}&
  }\\\mbox{}\\
\end{multline*}
}
\caption{$T^d_{m,n}, \quad 1 \leq n \leq \frac{m}{2}$ }\label{R1}
\end{figure}

\newpage
\thispagestyle{empty}
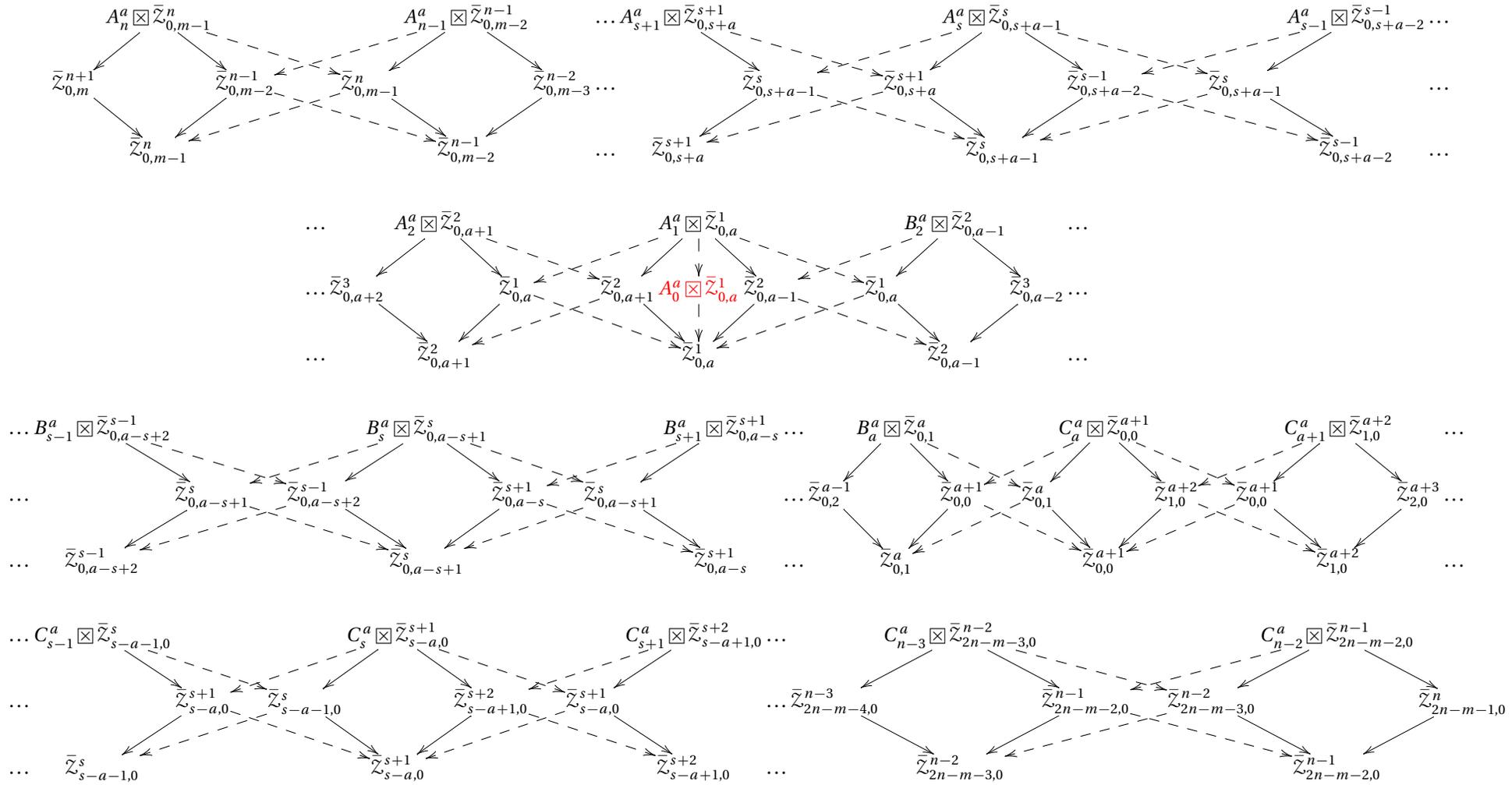
\begin{figure} 
{\footnotesize
\centering
\begin{multline*}
\hspace{-0.2cm}
 \xymatrix@R=18pt@C=1pt@W=0pt@M=0.5pt{
    &&\Da{a}{n} \boxtimes \repZZ^{n}_{0,m-1} \ar[dl]\ar[dr]\myar[drrr]  
    &&&& \Da{a}{n-1} \boxtimes \repZZ^{n-1}_{0,m-2} \ar[dl]\ar[dr]\myar[dlll]	
    &&
    \dots
    & \Da{a}{s+1}\boxtimes \repZZ^{s+1}_{0,s+a} \ar[dr]\myar[drrr]  
    &&&& \Da{a}{s} \boxtimes \repZZ^{s}_{0,s+a-1} \ar[dl]\ar[dr]\myar[dlll]\myar[drrr]	
    &&&& \Da{a}{s-1}\boxtimes \repZZ^{s-1}_{0,s+a-2} \ar[dl]\myar[dlll]
    & \dots
    \\    
    &\repZZ^{n+1}_{0,m} \hspace{3pt} \ar[dr]
    && \repZZ^{n-1}_{0,m-2} \ar[dl]\myar[drrr]
    &\hspace{1cm}
    & \repZZ^{n}_{0,m-1}  \ar[dr]\myar[dlll]
    && \repZZ^{n-2}_{0,m-3} \ar[dl]
    &
    \dots
    && \repZZ^{s}_{0,s+a-1} \ar[dl]\myar[drrr]
    &\hspace{1cm}
    & \repZZ^{s+1}_{0,s+a}  \ar[dr]\myar[dlll]
    & 
    & \repZZ^{s-1}_{0,s+a-2} \ar[dl]\myar[drrr]
    &\hspace{1cm}
    &\repZZ^{s}_{0,s+a-1} \ar[dr]\myar[dlll]
    &&\dots
    \\
    && \repZZ^{n}_{0,m-1}
    &&&& \repZZ^{n-1}_{0,m-2}
    &&
    \dots
    & \repZZ^{s+1}_{0,s+a}
    &&&& \repZZ^{s}_{0,s+a-1}
    &&&& \repZZ^{s-1}_{0,s+a-2}
    & \dots
  }\\\mbox{}\\
 \xymatrix@R=18pt@C=1pt@W=0pt@M=0.5pt{
    \dots&
    & \Da{a}{2}\boxtimes \repZZ^{2}_{0,a+1} \ar[dl]\ar[dr]\myar[drrr]  
    &&&& \Da{a}{1}\boxtimes \repZZ^{1}_{0,a} \ar[dl]\ar[dr]\myar[d]\myar[dlll]\myar[drrr]	
    &&&& \Db{a}{2}\boxtimes \repZZ^{2}_{0,a-1} \ar[dl]\ar[dr]\myar[dlll]
    && \dots
    \\    
    \dots
    &\repZZ^{3}_{0,a+2} \hspace{3pt} \ar[dr]
    && \repZZ^{1}_{0,a} \ar[dl]\myar[drrr]
    &\hspace{1cm}
    & \repZZ^{2}_{0,a+1}  \ar[dr]\myar[dlll]
    & \red{\Da{a}{0}\boxtimes \repZZ^{1}_{0,a}} \myar[d]
    & \repZZ^{2}_{0,a-1} \ar[dl]\myar[drrr]
    &\hspace{1cm}
    &\repZZ^{1}_{0,a} \ar[dr]\myar[dlll]
    && \repZZ^{3}_{0,a-2} \ar[dl]
    & \dots
    \\
    \dots
    && \repZZ^{2}_{0,a+1}
    &&&& \repZZ^{1}_{0,a}
    &&&& \repZZ^{2}_{0,a-1}
    && \dots
  }\\\mbox{}\\
\hspace{-0.5cm}
 \xymatrix@R=18pt@C=1pt@W=0pt@M=0.5pt{
    \dots
    & \Db{a}{s-1} \boxtimes \repZZ^{s-1}_{0,a-s+2} \ar[dr]\myar[drrr]  
    &&&& \Db{a}{s}\boxtimes \repZZ^{s}_{0,a-s+1} \ar[dl]\ar[dr]\myar[dlll]\myar[drrr]	
    &&&& \Db{a}{s+1}\boxtimes \repZZ^{s+1}_{0,a-s} \ar[dl]\myar[dlll]
    & \dots
	&& \Db{a}{a} \boxtimes \repZZ^{a}_{0,1} \ar[dl]\ar[dr] \myar[drrr]
	&&&& \Dc{a}{a}\boxtimes \repZZ^{a+1}_{0,0} \ar[dl] \ar[dr]\myar[dlll] \myar[drrr] 
	&&&& \Dc{a}{a+1}\boxtimes \repZZ^{a+2}_{1,0} \ar[dl] \ar[dr]\myar[dlll]  
	&& \dots
    \\    
    \dots
    && \repZZ^{s}_{0,a-s+1} \ar[dl]\myar[drrr] 
    &\hspace{0.5cm}
    & \repZZ^{s-1}_{0,a-s+2}  \ar[dr]\myar[dlll]
    & 
    & \repZZ^{s+1}_{0,a-s} \ar[dl]\myar[drrr]
    &\hspace{0.5cm}
    &\repZZ^{s}_{0,a-s+1} \ar[dr]\myar[dlll]
    && \dots
	& \repZZ^{a-1}_{0,2} \ar[dr]
	&& \repZZ^{a+1}_{0,0} \ar[dl]\myar[drrr]
	& \hspace{0.5cm}
	& \repZZ^{a}_{0,1} \ar[dr]\myar[dlll]  
    && \repZZ^{a+2}_{1,0} \ar[dl]\myar[drrr]    
    & \hspace{0.5cm}
    & \repZZ^{a+1}_{0,0} \ar[dr] \myar[dlll]
    && \repZZ^{a+3}_{2,0} \ar[dl]
    & \dots       
    \\
    \dots
    & \repZZ^{s-1}_{0,a-s+2}
    &&&& \repZZ^{s}_{0,a-s+1}
    &&&& \repZZ^{s+1}_{0,a-s}
    & \dots
    && \repZZ^{a}_{0,1}
    &&&& \repZZ^{a+1}_{0,0}
    &&&& \repZZ^{a+2}_{1,0} 
    && \dots
  }\\\mbox{}\\
\hspace{-0.5cm}
 \xymatrix@R=18pt@C=1pt@W=0pt@M=0.5pt{
    \dots
    & \Dc{a}{s-1} \boxtimes \repZZ^{s}_{s-a-1,0} \ar[dr]\myar[drrr]  
    &&&& \Dc{a}{s}\boxtimes \repZZ^{s+1}_{s-a,0} \ar[dl]\ar[dr]\myar[dlll]\myar[drrr]	
    &&&& \Dc{a}{s+1}\boxtimes \repZZ^{s+2}_{s-a+1,0} \ar[dl]\myar[dlll]
    & \dots
	&& \Dc{a}{n-3} \boxtimes \repZZ^{n-2}_{2n-m-3,0} \ar[dl]\ar[dr] \myar[drrr]
	&&&& \Dc{a}{n-2}\boxtimes \repZZ^{n-1}_{2n-m-2,0} \ar[dl] \ar[dr]\myar[dlll] &
    \\    
    \dots
    && \repZZ^{s+1}_{s-a,0} \ar[dl]\myar[drrr] 
    &\hspace{0.5cm}
    & \repZZ^{s}_{s-a-1,0}  \ar[dr]\myar[dlll]
    & 
    & \repZZ^{s+2}_{s-a+1,0} \ar[dl]\myar[drrr]
    &\hspace{0.5cm}
    &\repZZ^{s+1}_{s-a,0} \ar[dr]\myar[dlll]
    && \dots
	& \repZZ^{n-3}_{2n-m-4,0} \ar[dr]
	&& \repZZ^{n-1}_{2n-m-2,0} \ar[dl]\myar[drrr]
	& \hspace{0.5cm}
	& \repZZ^{n-2}_{2n-m-3,0} \ar[dr]\myar[dlll]  
    && \repZZ^{n}_{2n-m-1,0} \ar[dl]  
    \\
    \dots
    & \repZZ^{s}_{s-a-1,0}
    &&&& \repZZ^{s+1}_{s-a,0}
    &&&& \repZZ^{s+2}_{s-a+1,0}
    & \dots
    && \repZZ^{n-2}_{2n-m-3,0}
    &&&& \repZZ^{n-1}_{2n-m-2,0}&
  }\\\mbox{}\\
\end{multline*}
}
\caption{$T^d_{m,n}, \frac{m}{2}+1  \leq n \leq  m-2 $.}
\label{R2}
\end{figure}

\newpage
\thispagestyle{empty}
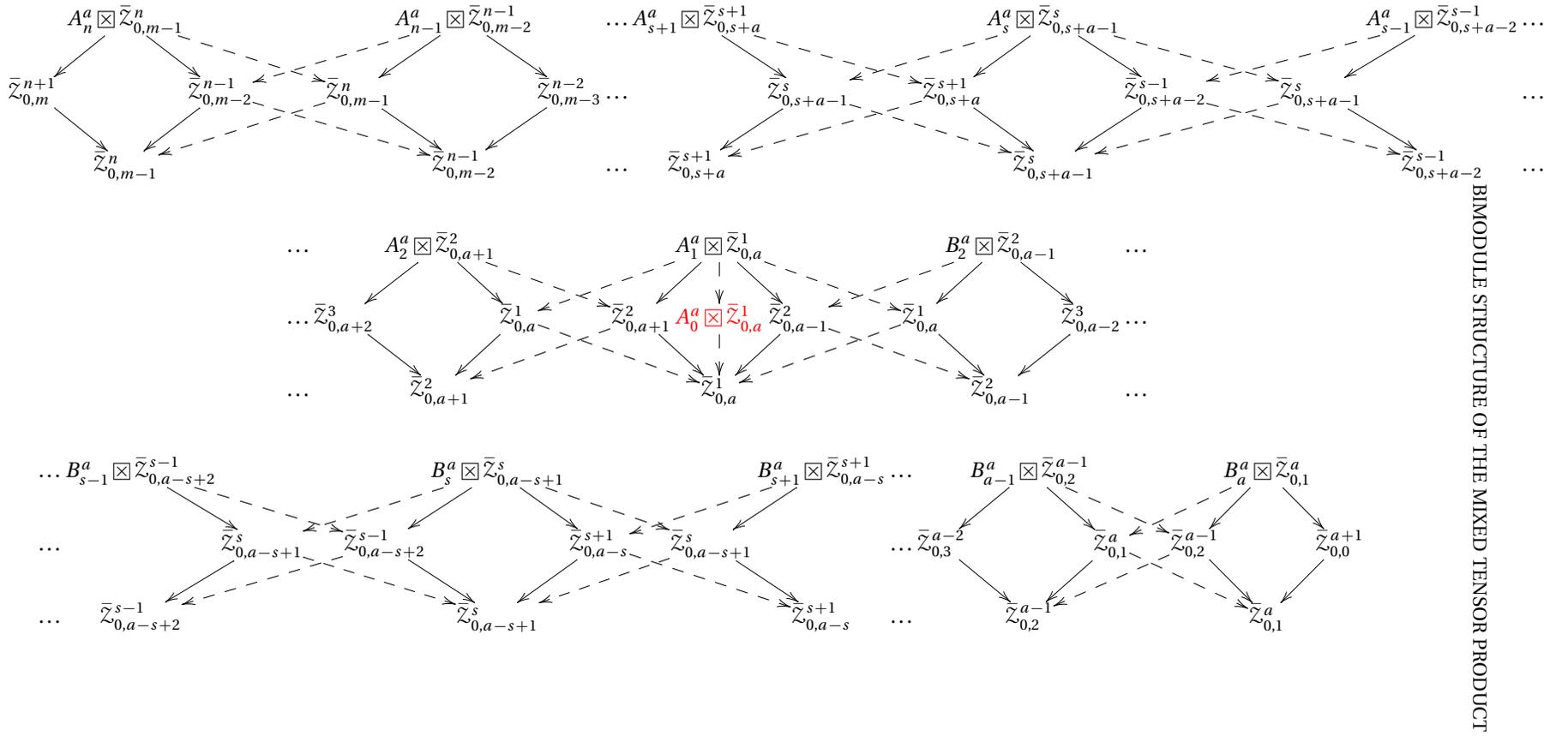
\begin{figure} 
{\footnotesize
\centering
\begin{multline*}
\hspace{-0.2cm}
 \xymatrix@R=18pt@C=1pt@W=0pt@M=0.5pt{
    &&\Da{a}{n} \boxtimes \repZZ^{n}_{0,m-1} \ar[dl]\ar[dr]\myar[drrr]  
    &&&& \Da{a}{n-1} \boxtimes \repZZ^{n-1}_{0,m-2} \ar[dl]\ar[dr]\myar[dlll]	
    &&
    \dots
    & \Da{a}{s+1}\boxtimes \repZZ^{s+1}_{0,s+a} \ar[dr]\myar[drrr]  
    &&&& \Da{a}{s} \boxtimes \repZZ^{s}_{0,s+a-1} \ar[dl]\ar[dr]\myar[dlll]\myar[drrr]	
    &&&& \Da{a}{s-1}\boxtimes \repZZ^{s-1}_{0,s+a-2} \ar[dl]\myar[dlll]
    & \dots
    \\    
    &\repZZ^{n+1}_{0,m} \hspace{3pt} \ar[dr]
    && \repZZ^{n-1}_{0,m-2} \ar[dl]\myar[drrr]
    &\hspace{1cm}
    & \repZZ^{n}_{0,m-1}  \ar[dr]\myar[dlll]
    && \repZZ^{n-2}_{0,m-3} \ar[dl]
    &
    \dots
    && \repZZ^{s}_{0,s+a-1} \ar[dl]\myar[drrr]
    &\hspace{1cm}
    & \repZZ^{s+1}_{0,s+a}  \ar[dr]\myar[dlll]
    & 
    & \repZZ^{s-1}_{0,s+a-2} \ar[dl]\myar[drrr]
    &\hspace{1cm}
    &\repZZ^{s}_{0,s+a-1} \ar[dr]\myar[dlll]
    &&\dots
    \\
    && \repZZ^{n}_{0,m-1}
    &&&& \repZZ^{n-1}_{0,m-2}
    &&
    \dots
    & \repZZ^{s+1}_{0,s+a}
    &&&& \repZZ^{s}_{0,s+a-1}
    &&&& \repZZ^{s-1}_{0,s+a-2}
    & \dots
  }\\\mbox{}\\
 \xymatrix@R=18pt@C=1pt@W=0pt@M=0.5pt{
    \dots&
    & \Da{a}{2}\boxtimes \repZZ^{2}_{0,a+1} \ar[dl]\ar[dr]\myar[drrr]  
    &&&& \Da{a}{1}\boxtimes \repZZ^{1}_{0,a} \ar[dl]\ar[dr]\myar[d]\myar[dlll]\myar[drrr]	
    &&&& \Db{a}{2}\boxtimes \repZZ^{2}_{0,a-1} \ar[dl]\ar[dr]\myar[dlll]
    && \dots
    \\    
    \dots
    &\repZZ^{3}_{0,a+2} \hspace{3pt} \ar[dr]
    && \repZZ^{1}_{0,a} \ar[dl]\myar[drrr]
    &\hspace{1cm}
    & \repZZ^{2}_{0,a+1}  \ar[dr]\myar[dlll]
    & \red{\Da{a}{0}\boxtimes \repZZ^{1}_{0,a}} \myar[d]
    & \repZZ^{2}_{0,a-1} \ar[dl]\myar[drrr]
    &\hspace{1cm}
    &\repZZ^{1}_{0,a} \ar[dr]\myar[dlll]
    && \repZZ^{3}_{0,a-2} \ar[dl]
    & \dots
    \\
    \dots
    && \repZZ^{2}_{0,a+1}
    &&&& \repZZ^{1}_{0,a}
    &&&& \repZZ^{2}_{0,a-1}
    && \dots
  }\\\mbox{}\\
\hspace{-0.5cm}
 \xymatrix@R=18pt@C=1pt@W=0pt@M=0.5pt{
    \dots
    & \Db{a}{s-1} \boxtimes \repZZ^{s-1}_{0,a-s+2} \ar[dr]\myar[drrr]  
    &&&& \Db{a}{s}\boxtimes \repZZ^{s}_{0,a-s+1} \ar[dl]\ar[dr]\myar[dlll]\myar[drrr]	
    &&&& \Db{a}{s+1}\boxtimes \repZZ^{s+1}_{0,a-s} \ar[dl]\myar[dlll]
    & \dots
	&& \Db{a}{a-1} \boxtimes \repZZ^{a-1}_{0,2} \ar[dl]\ar[dr] \myar[drrr]
	&&&& \Db{a}{a}\boxtimes \repZZ^{a}_{0,1} \ar[dl] \ar[dr]\myar[dlll] &
    \\    
    \dots
    && \repZZ^{s}_{0,a-s+1} \ar[dl]\myar[drrr] 
    &\hspace{0.5cm}
    & \repZZ^{s-1}_{0,a-s+2}  \ar[dr]\myar[dlll]
    & 
    & \repZZ^{s+1}_{0,a-s} \ar[dl]\myar[drrr]
    &\hspace{0.5cm}
    &\repZZ^{s}_{0,a-s+1} \ar[dr]\myar[dlll]
    && \dots
	& \repZZ^{a-2}_{0,3} \ar[dr]
	&& \repZZ^{a}_{0,1} \ar[dl]\myar[drrr]
	& \hspace{0.5cm}
	& \repZZ^{a-1}_{0,2} \ar[dr]\myar[dlll]  
    && \repZZ^{a+1}_{0,0} \ar[dl]  
    \\
    \dots
    & \repZZ^{s-1}_{0,a-s+2}
    &&&& \repZZ^{s}_{0,a-s+1}
    &&&& \repZZ^{s+1}_{0,a-s}
    & \dots
    && \repZZ^{a-1}_{0,2}
    &&&& \repZZ^{a}_{0,1}&
  }\\\mbox{}\\
\end{multline*}
}
\caption{$T^d_{m,n}, n=\frac{m+1}{2}, \quad n \geq 2$,  ($a=n-1$)}
\label{R3}
\end{figure}

\newpage
\thispagestyle{empty}
\begin{figure}
{\footnotesize
\centering
\begin{multline*}
\hspace{-0.2cm}
 \xymatrix@R=18pt@C=1pt@W=0pt@M=0.5pt{
    &&\Da{1}{m-1} \boxtimes \repZZ^{m-1}_{0,m-1} \ar[dl]\ar[dr]\myar[drrr]  
    &&&& \Da{1}{m-2} \boxtimes \repZZ^{m-2}_{0,m-2} \ar[dl]\ar[dr]\myar[dlll]	
    &&
    \dots
    & \Da{1}{s+1}\boxtimes \repZZ^{s+1}_{0,s+1} \ar[dr]\myar[drrr]  
    &&&& \Da{1}{s} \boxtimes \repZZ^{s}_{0,s} \ar[dl]\ar[dr]\myar[dlll]\myar[drrr]	
    &&&& \Da{1}{s-1}\boxtimes \repZZ^{s-1}_{0,s-1} \ar[dl]\myar[dlll]
    & \dots
    \\    
    &\repZZ^{m}_{0,m} \hspace{3pt} \ar[dr]
    && \repZZ^{m-2}_{0,m-2} \ar[dl]\myar[drrr]
    &\hspace{1cm}
    & \repZZ^{m-1}_{0,m-1}  \ar[dr]\myar[dlll]
    && \repZZ^{m-3}_{0,m-3} \ar[dl]
    &
    \dots
    && \repZZ^{s}_{0,s} \ar[dl]\myar[drrr]
    &\hspace{1cm}
    & \repZZ^{s+1}_{0,s+1}  \ar[dr]\myar[dlll]
    & 
    & \repZZ^{s-1}_{0,s-1} \ar[dl]\myar[drrr]
    &\hspace{1cm}
    &\repZZ^{s}_{0,s} \ar[dr]\myar[dlll]
    &&\dots
    \\
    && \repZZ^{m-1}_{0,m-1}
    &&&& \repZZ^{m-2}_{0,m-2}
    &&
    \dots
    & \repZZ^{s+1}_{0,s+1}
    &&&& \repZZ^{s}_{0,s}
    &&&& \repZZ^{s-1}_{0,s-1}
    & \dots
  }\\\mbox{}\\
 \xymatrix@R=18pt@C=1pt@W=0pt@M=0.5pt{
    \dots&
    & \Da{1}{2}\boxtimes \repZZ^{2}_{0,2} \ar[dl]\ar[dr]\myar[drrr]  
    &&&& \Da{1}{1}\boxtimes \repZZ^{1}_{0,1} \ar[dl]\ar[dr]\myar[d]\myar[dlll]\myar[drrr]	
    &&&& \Dc{1}{1}\boxtimes \repZZ^{2}_{0,0} \ar[dl]\ar[dr]\myar[dlll]
    && \dots
    \\    
    \dots
    &\repZZ^{3}_{0,3} \hspace{3pt} \ar[dr]
    && \repZZ^{1}_{0,1} \ar[dl]\myar[drrr]
    &\hspace{1cm}
    & \repZZ^{2}_{0,2}  \ar[dr]\myar[dlll]
    & \red{\Da{1}{0}\boxtimes \repZZ^{1}_{0,1}} \myar[d]
    & \repZZ^{2}_{0,0} \ar[dl]\myar[drrr]
    &\hspace{1cm}
    &\repZZ^{1}_{0,1} \ar[dr]\myar[dlll]
    && \repZZ^{3}_{1,0} \ar[dl]
    & \dots
    \\
    \dots
    && \repZZ^{2}_{0,2}
    &&&& \repZZ^{1}_{0,1}
    &&&& \repZZ^{2}_{0,0}
    && \dots
  }\\\mbox{}\\
\hspace{-0.5cm}
 \xymatrix@R=18pt@C=1pt@W=0pt@M=0.5pt{
    \dots
    & \Dc{1}{s-1} \boxtimes \repZZ^{s}_{s-2,0} \ar[dr]\myar[drrr]  
    &&&& \Dc{1}{s}\boxtimes \repZZ^{s+1}_{s-1,0} \ar[dl]\ar[dr]\myar[dlll]\myar[drrr]	
    &&&& \Dc{1}{s+1}\boxtimes \repZZ^{s+2}_{s,0} \ar[dl]\myar[dlll]
    & \dots
	&& \Dc{1}{m-4} \boxtimes \repZZ^{m-3}_{m-5,0} \ar[dl]\ar[dr] \myar[drrr]
	&&&& \Dc{1}{m-3}\boxtimes \repZZ^{m-2}_{m-4,0} \ar[dl] \ar[dr]\myar[dlll] &
    \\    
    \dots
    && \repZZ^{s+1}_{s-1,0} \ar[dl]\myar[drrr] 
    &\hspace{0.5cm}
    & \repZZ^{s}_{s-2,0}  \ar[dr]\myar[dlll]
    & 
    & \repZZ^{s+2}_{s,0} \ar[dl]\myar[drrr]
    &\hspace{0.5cm}
    &\repZZ^{s+1}_{s-1,0} \ar[dr]\myar[dlll]
    && \dots
	& \repZZ^{m-4}_{m-6,0} \ar[dr]
	&& \repZZ^{m-2}_{m-4,0} \ar[dl]\myar[drrr]
	& \hspace{0.5cm}
	& \repZZ^{m-3}_{m-5,0} \ar[dr]\myar[dlll]  
    && \repZZ^{m-1}_{m-3,0} \ar[dl]  
    \\
    \dots
    & \repZZ^{s}_{s-2,0}
    &&&& \repZZ^{s+1}_{s-1,0}
    &&&& \repZZ^{s+2}_{s,0}
    & \dots
    && \repZZ^{m-3}_{m-5,0}
    &&&& \repZZ^{m-2}_{m-4,0}&
  }\\\mbox{}\\
\end{multline*}
}
\caption{$T^d_{m,m-1}, \quad m \geq 2$ }\label{R4}
\end{figure}
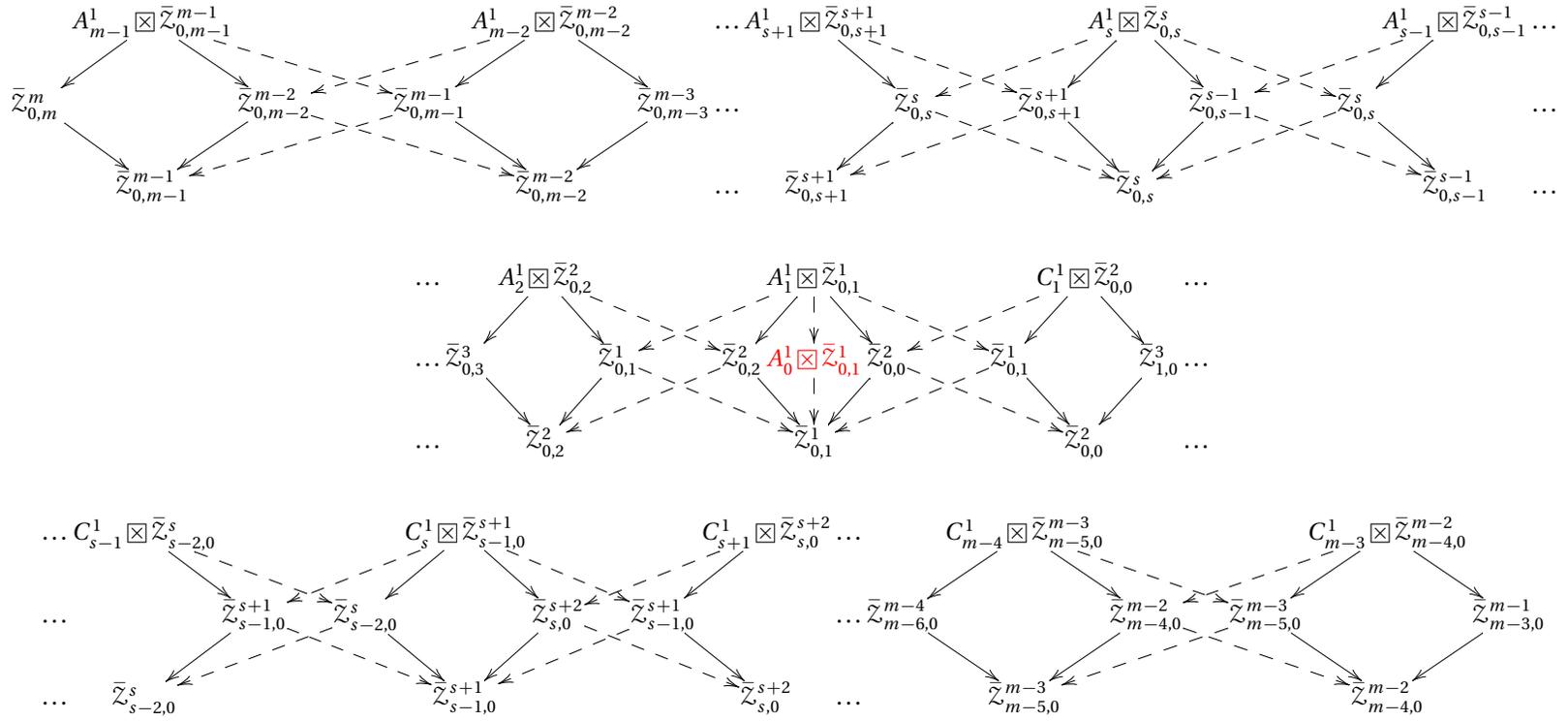

\newpage
\thispagestyle{empty}
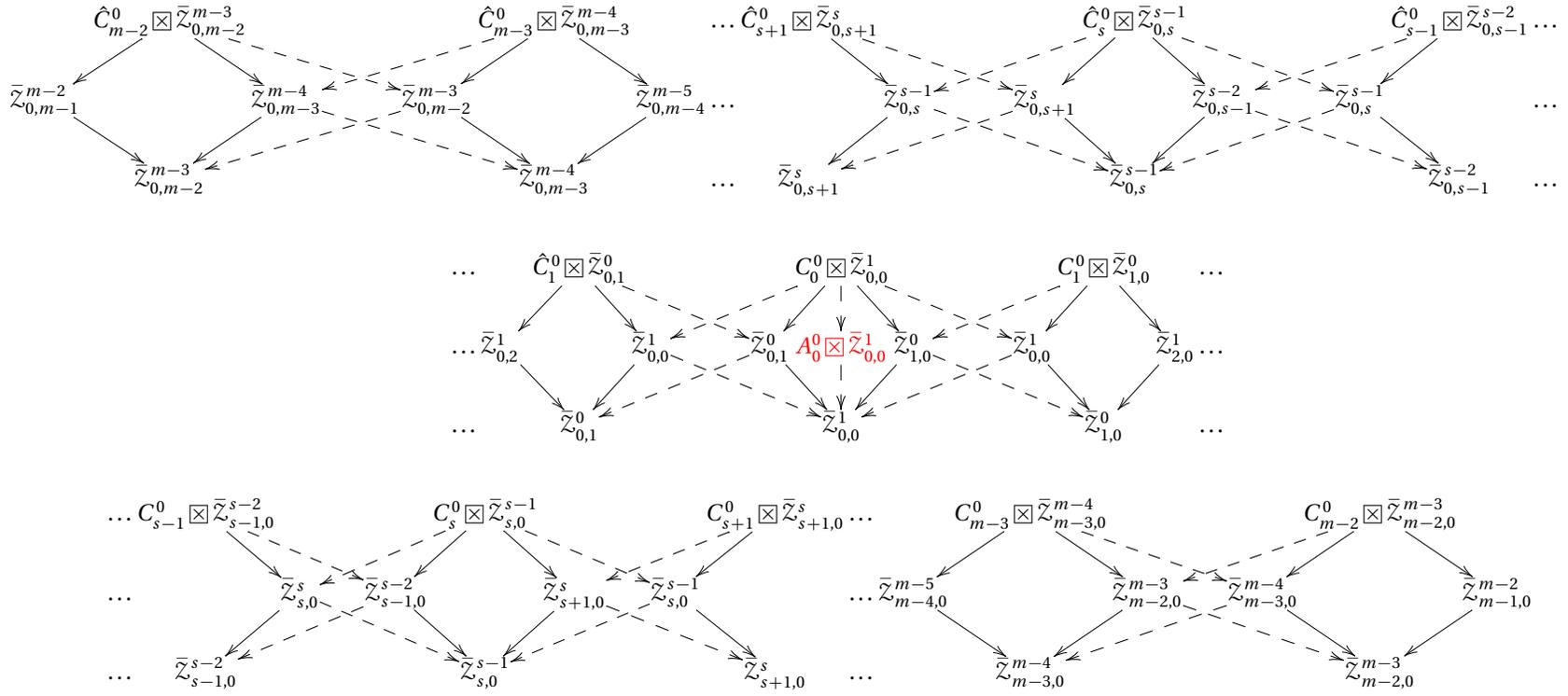
\begin{figure} 
{\footnotesize
\centering
\begin{multline*}
\hspace{-1cm}
 \xymatrix@R=18pt@C=1pt@W=0pt@M=0.5pt{
    &&\Dcc{0}{m-2} \boxtimes \repZZ^{m-3}_{0,m-2} \ar[dl]\ar[dr]\myar[drrr]  
    &&&& \Dcc{0}{m-3} \boxtimes \repZZ^{m-4}_{0,m-3} \ar[dl]\ar[dr]\myar[dlll]	
    &&
    \dots
    & \Dcc{0}{s+1}\boxtimes \repZZ^{s}_{0,s+1} \ar[dr]\myar[drrr]  
    &&&& \Dcc{0}{s} \boxtimes \repZZ^{s-1}_{0,s} \ar[dl]\ar[dr]\myar[dlll]\myar[drrr]	
    &&&& \Dcc{0}{s-1}\boxtimes \repZZ^{s-2}_{0,s-1} \ar[dl]\myar[dlll]
    & \dots
    \\    
    &\repZZ^{m-2}_{0,m-1} \hspace{3pt} \ar[dr]
    && \repZZ^{m-4}_{0,m-3} \ar[dl]\myar[drrr]
    &\hspace{1cm}
    & \repZZ^{m-3}_{0,m-2}  \ar[dr]\myar[dlll]
    && \repZZ^{m-5}_{0,m-4} \ar[dl]
    &
    \dots
    && \repZZ^{s-1}_{0,s} \ar[dl]\myar[drrr]
    &\hspace{1cm}
    & \repZZ^{s}_{0,s+1}  \ar[dr]\myar[dlll]
    && \repZZ^{s-2}_{0,s-1} \ar[dl]\myar[drrr]
    &\hspace{1cm}
    &\repZZ^{s-1}_{0,s} \ar[dr]\myar[dlll]
    &&\dots
    \\
    && \repZZ^{m-3}_{0,m-2}
    &&&& \repZZ^{m-4}_{0,m-3}
    &&
    \dots
    & \repZZ^{s}_{0,s+1}
    &&&& \repZZ^{s-1}_{0,s}
    &&&& \repZZ^{s-2}_{0,s-1}
    & \dots
  }\\\mbox{}\\
 \xymatrix@R=18pt@C=1pt@W=0pt@M=0.5pt{
    \dots&
    & \Dcc{0}{1}\boxtimes \repZZ^{0}_{0,1} \ar[dl]\ar[dr]\myar[drrr]  
    &&&& \Dc{0}{0}\boxtimes \repZZ^{1}_{0,0} \ar[dl]\ar[dr]\myar[d]\myar[dlll]\myar[drrr]	
    &&&& \Dc{0}{1}\boxtimes \repZZ^{0}_{1,0} \ar[dl]\ar[dr]\myar[dlll]
    && \dots
    \\    
    \dots
    &\repZZ^{1}_{0,2} \hspace{3pt} \ar[dr]
    && \repZZ^{1}_{0,0} \ar[dl]\myar[drrr]
    &\hspace{1cm}
    & \repZZ^{0}_{0,1}  \ar[dr]\myar[dlll]
    & \red{\Da{0}{0}\boxtimes \repZZ^{1}_{0,0}} \myar[d]
    & \repZZ^{0}_{1,0} \ar[dl]\myar[drrr]
    &\hspace{1cm}
    &\repZZ^{1}_{0,0} \ar[dr]\myar[dlll]
    && \repZZ^{1}_{2,0} \ar[dl]
    & \dots
    \\
    \dots
    && \repZZ^{0}_{0,1}
    &&&& \repZZ^{1}_{0,0}
    &&&& \repZZ^{0}_{1,0}
    && \dots
  }\\\mbox{}\\
\hspace{-0.5cm}
 \xymatrix@R=18pt@C=1pt@W=0pt@M=0.5pt{
    \dots
    & \Dc{0}{s-1} \boxtimes \repZZ^{s-2}_{s-1,0} \ar[dr]\myar[drrr]  
    &&&& \Dc{0}{s}\boxtimes \repZZ^{s-1}_{s,0} \ar[dl]\ar[dr]\myar[dlll]\myar[drrr]	
    &&&& \Dc{0}{s+1}\boxtimes \repZZ^{s}_{s+1,0} \ar[dl]\myar[dlll]
    & \dots
	&& \Dc{0}{m-3} \boxtimes \repZZ^{m-4}_{m-3,0} \ar[dl]\ar[dr] \myar[drrr]
	&&&& \Dc{0}{m-2}\boxtimes \repZZ^{m-3}_{m-2,0} \ar[dl] \ar[dr]\myar[dlll] &
    \\    
    \dots
    && \repZZ^{s}_{s,0} \ar[dl]\myar[drrr] 
    &\hspace{0.5cm}
    & \repZZ^{s-2}_{s-1,0}  \ar[dr]\myar[dlll]
    & 
    & \repZZ^{s}_{s+1,0} \ar[dl]\myar[drrr]
    &\hspace{0.5cm}
    &\repZZ^{s-1}_{s,0} \ar[dr]\myar[dlll]
    && \dots
	& \repZZ^{m-5}_{m-4,0} \ar[dr]
	&& \repZZ^{m-3}_{m-2,0} \ar[dl]\myar[drrr]
	& \hspace{0.5cm}
	& \repZZ^{m-4}_{m-3,0} \ar[dr]\myar[dlll]  
    && \repZZ^{m-2}_{m-1,0} \ar[dl]  
    \\
    \dots
    & \repZZ^{s-2}_{s-1,0}
    &&&& \repZZ^{s-1}_{s,0}
    &&&& \repZZ^{s}_{s+1,0}
    & \dots
    && \repZZ^{m-4}_{m-3,0}
    &&&& \repZZ^{m-3}_{m-2,0}&
  }\\\mbox{}\\
\end{multline*}
}
\caption{$T^d_{m,m}, \quad m \geq 2$ }\label{R5}
\end{figure}
\end{landscape}

\newpage


\begin{thebibliography}{99}

\bibitem{FGST2006_1}
B. L. Feigin, A. M. Gainutdinov, A. M. Semikhatov and I. Yu. Tipunin,
\textit{Modular group representations and fusion in logarithmic conformal field theories and in the quantum group center},
Commun.Math.Phys. 265 (2006) 47-93

\bibitem{FGST2006_3}
B. L. Feigin, A. M. Gainutdinov, A. M. Semikhatov and I. Yu. Tipunin,
\textit{Logarithmic extensions of minimal models: characters and modular transformations},
Nucl.Phys.B, 757 (2006) 303-343

\bibitem{FuchsHwangST}
J. Fjelstad, J. Fuchs, S. Hwang, A. M. Semikhatov and I. Yu. Tipunin,
\textit{Logarithmic Conformal Field Theories via Logarithmic Deformations},
Nucl.Phys. B 633 (2002) 379-413

\bibitem{SemTip2011}
A. M. Semikhatov, I. Yu. Tipunin,
\textit{The Nichols algebra of screenings},
Commun. Contemp. Math. 14 (2012) 1250029 

\bibitem{Simon2017} S. D. Lentner, 
\textit{Quantum groups and Nichols algebras acting on conformal field theories}, 
arXiv:1702.06431 [math.QA] (2017)

\bibitem{PRZ}  
P.~ Pearce, J.~ Rasmussen, and J.-B.~ Zuber, \textit{Logarithmic minimal models}, J. Stat. Mech. 
(2006).

\bibitem{ReadSaleur2007}
N. Read, H. Saleur,
\textit{Enlarged symmetry algebras of spin chains, loop models, and S-matrices},
Nucl.Phys.B 777:263-315 (2007)

\bibitem{ReadSaleur2007_2} N.~Read, H.~Saleur, 
\textit{Associative-algebraic approach to logarithmic conformal field theories}, Nucl. Phys. B { 777}, 316 (2007). 

\bibitem{Ridout2007}
Pierre Mathieu, D. Ridout,
\textit{From percolation to logarithmic conformal field theory}, 
Phys.Lett.B657 (2007) 120-129

\bibitem{Ridout2015}
A. Morin-Duchesne, J. Rasmussen and D. Ridout,
\textit{Boundary algebras and Kac modules for logarithmic minimal models}, 
arXiv:1503.07584 [hep-th] (2015)

\bibitem{ga} A.~ M.~ Gainutdinov, N.~ Read, H.~ Saleur and R.~
  Vasseur, {\textit{The periodic $sl(2|1)$ alternating spin chain and
      its continuum limit as a bulk Logarithmic Conformal Field Theory
      at $c=0$}}, JHEP 1505 (2015).

\bibitem{AzatVasseur2013}
A. M. Gainutdinov, R. Vasseur,
\textit{Lattice fusion rules and logarithmic operator product expansions},
Nucl. Phys. B 868, 223-270 (2013)

\bibitem{DiFrancesco}
Ph. Di Francesco, P. Mathieu and D. Senechal, 
\textit{Conformal Field Theory}, Springer, New York, 1997

\bibitem{AzatReadSaleur2016} A. M. Gainutdinov, N. Read and H. Saleur, 
\textit{Associative algebraic approach to logarithmic CFT in the bulk: the continuum limit of the $g\ell(1|1)$ periodic spin chain, Howe duality and the interchiral algebra},  Comm. Math. Phys., 341, 35 (2016).

\bibitem{DuplantierSaleur1987} B. Duplantier, H. Saleur,
\textit{Exact critical properties of two-dimensional dense self-avoiding walks},  
Nucl.Phys.B, 290 (1987) 291-326

\bibitem{FrancSaleurZuber1987} P. Di Francesco, H. Saleur, J.B. Zuber,
\textit{Critical Ising correlation functions in the plane and on the torus},  
Nucl.Phys.B, 290 (1987) 527-581

\bibitem{Cardy1991}
J. Cardy,
\textit{Critical Percolation in Finite Geometries},
J.Phys. A25 (1992) L201-L206

\bibitem{Ruelle2013}
Ph. Ruelle,
\textit{Logarithmic conformal invariance in the Abelian sandpile model},
J. Phys. A: Math. Theor. 46 (2013) 494014 (39pp)

\bibitem{Rasmussen2007_1}
J. Rasmussen, P. Pearce,
\textit{Fusion algebra of critical percolation},
J. Stat. Mech. P09002 (2007)

\bibitem{Rasmussen2007_2}
J. Rasmussen, P. Pearce,
\textit{Fusion algebras of logarithmic minimal models}, 
J. Phys. A40 (2007) 13711 

\bibitem{AzatSaleur2016} A.M. Gainutdinov, H. Saleur, 
\textit{Fusion and braiding in finite and affine Temperley-Lieb categories}, 
arXiv:1606.04530 [math.QA] (2016)

\bibitem{Baxter}	
R. J. Baxter, \textit{Exactly Solved Models in Statistical Mechanics}, Academic Press, 1982

\bibitem{Quantum_Groups_2dim}	
C. Gómez , M. Ruiz-Altaba and G. Sierra,
\textit{Quantum Groups in Two-Dimensional Physics}, Cambridge University Press, 1996

\bibitem{Dubail2010} J.~Dubail, J.~L. Jacobsen and H.~Saleur, 
\textit{Conformal field theory at central
  charge c = 0: A measure of the indecomposability (b) parameters},
Nucl.Phys.B, 834 (2010) 399-422 

\bibitem{VasseurJacobsen2011} R.~Vasseur, J.~L. Jacobsen, and H.~Saleur, 
\textit{Indecomposability parameters in chiral Logarithmic Conformal Field Theory},
Nucl.Phys.B, 851 (2011) 314-345 

\bibitem{MorinDuchesne2011} A.~Morin-Duchesne and Y.~Saint-Aubin, 
\textit{The Jordan Structure of Two Dimensional Loop Models},
J.Stat.Mech.1104:P04007 (2011)

\bibitem{PearceRasmussen2013} P. Pearce, J. Rasmussen and E. Tartaglia, 
\textit{Logarithmic superconformal minimal models}, 
arXiv:1312.6763 [hep-th] (2013)

\bibitem{BPPR_2014}
J. Brankov, V. Poghosyan, V. Priezzhev and P. Ruelle,
\textit{Transfer matrix for spanning trees, webs and colored forests},
J. Stat. Mech. (2014) P09031

\bibitem{AzatNepomechie2016} A.M. Gainutdinov, R.I. Nepomechie, 
\textit{Algebraic Bethe ansatz for the quantum group invariant open XXZ chain at roots of unity},
arXiv:1603.09249 [math-ph] (2016)

\bibitem{FGST2006_2}
B. L. Feigin, A. M. Gainutdinov, A. M. Semikhatov and I. Yu. Tipunin,
\textit{Kazhdan--Lusztig correspondence for the representation category of the triplet W-algebra in logarithmic CFT},
Theor.Math.Phys. 148 (2006) 1210-1235; Teor.Mat.Fiz. 148 (2006) 398-427

\bibitem{FGST2007}
B. L. Feigin, A. M. Gainutdinov, A. M. Semikhatov and I. Yu. Tipunin,
\textit{Kazhdan--Lusztig-dual quantum group for logarithmic extensions of Virasoro minimal models},
J.Math.Phys.48 (2007) 032303 

\bibitem{SemTip2013CFT}
A. M. Semikhatov, I. Yu. Tipunin,
\textit{Logarithmic $\hat{s\ell(2)}$ CFT models from Nichols algebras. 1},
J. Phys. A: Math. Theor. 46 (2013) 494011

\bibitem{GaberdielKausch} M.R. Gaberdiel, H.G. Kausch, 
\textit{A rational logarithmic conformal field theory},
Phys. Lett. B, 386, (1996) 131 

\bibitem{FHST2004} J. Fuchs, S. Hwang, A.M. Semikhatov and I. Yu. Tipunin, 
\textit{Nonsemisimple Fusion Algebras and the Verlinde Formula},
Commun.Math.Phys. 247 (2004) 713-742 

\bibitem{AdamovicMilas2007_1}
D. Adamovic, A. Milas,
\textit{Logarithmic intertwining operators and $\mathscr{W}(2,2p-1)$-algebras},
J.Math.Phys. 48:073503 (2007)

\bibitem{AdamovicMilas2007_2}
D. Adamovic, A. Milas,
\textit{On the triplet vertex algebra $\mathscr{W}(p)$},
Advances in Mathematics, 217 (2008) 2664-2699

\bibitem{AzatSalTip2012}
A.M. Gainutdinov, H. Saleur and I.Yu. Tipunin, \textit{Lattice W-algebras and logarithmic CFTs}, J. Phys. A: Math. Theor. 47 (2014) 495401.

\bibitem{Candu2010}
C. Candu,
\textit{Continuum limit of $g\ell(M|N)$ spin chains},
JHEP 1107:069 (2011)

\bibitem{Links} J.~Links, A.~Foerster, {\textit{Integrability of a t-J model with impurities}}, Journal of Physics A General Physics 32(1) (1998).

\bibitem{Abad} J. ~Abad, M. ~Rios,
  {\textit{Exact solution of a electron system combining two different t-J models}}, Journal of Physics A: Mathematical and General 32 (19) (1999).

\bibitem{FeiYue1994}
S.-M. Fei, R.-H. Yue,
\textit{Generalized t-j Model},
J. Phys. A: Math. Gen. 27 3715 (1994), arXiv:cond-mat/9405016

\bibitem{Ridout2012}
T. Creutzig, D. Ridout,
\textit{Modular Data and Verlinde Formulae for Fractional Level WZW Models I},
arXiv:1205.6513 [hep-th] (2012)

\bibitem{Ridout2013}
T. Creutzig, D. Ridout,
\textit{Modular Data and Verlinde Formulae for Fractional Level WZW Models II},
arXiv:1306.4388 [hep-th] (2013)

\bibitem{RidoutWood_2015}
D. Ridout, S. Wood,
\textit{Relaxed singular vectors, Jack symmetric functions and fractional level $\hat{sl}(2)$ models},
arXiv:1306.4388 [hep-th] (2013)

\bibitem{BGT} P.~ V.~Bushlanov, A.~M.~Gainutdinov, I.~Yu.~Tipunin,
  {\textit{Kazhdan-Lusztig equivalence and fusion of Kac modules in
      Virasoro logarithmic models}}, Nucl.Phys. B862 (2012).

\bibitem{Saleur2005}
F.H.L. Essler, H. Frahm, H. Saleur,
\textit{Continuum Limit of the Integrable $s\ell(2|1)$ $3-\bar{3}$ Superspin Chain},
Nucl.Phys. B712 (2005) 513-572

\bibitem{Frahm2011}
H. Frahm, M.J. Martins,
\textit{Finite size properties of staggered $\qSL{2|1}$ superspin chains},
Nucl.Phys.B847:220-246 (2011)

\bibitem{Frahm2012}
H. Frahm, M.J. Martins,
\textit{Phase diagram of an integrable alternating  $\qSL{2|1}$ superspin chain},
Nucl. Phys. B 862 [FS] (2012) 504-552

\bibitem{ShaderMoon02}
C.~L.~Shader and D.~Moon, {\textit{Mixed tensor representations and rational representations for the general linear Lie superalgebras}}, Commun. Algebra 30:2 (2002), 839-857.

\bibitem{DDS-1} R.~Dipper, S.~Doty, F.~Stoll \textit{The quantized
    walled Brauer algebra and mixed tensor space}, F. Algebr. Represent. Theor. (2014) 17: 675. 

\bibitem{DDS-2}R.~Dipper, S.~Doty, F.~Stoll, \textit{Quantized mixed
    tensor space and Schur-Weyl duality}, Algebra \& Number Theory 7
  (2013) 1121--1146, arXiv:0810.1227 [math.RT]

\bibitem{Stroppel12}
J.~Brundan and C.~Stroppel, {\textit{Gradings on walled Brauer algebras and Khovanov's arc algebra}},
Adv. Math. 231:2 (2012), 709-773.

\bibitem{Leduc} R.~Leduc, {\textit{A two-parameter version of the centralizer algebra of the mixed tensor representation of the general linear group and quantum general linear group}}, thesis, University of Wisconsin-Madison (1994).

\bibitem{Halv} T.~Halverson, \textit{Characters of the centralizer
    algebras of mixed tensor representations of $GL(r,\oC)$ and the quantum
    group $U_q (g\ell(r,\oC))$}, Pacific Jornal of Mathematics, 174 (1996) 259-410.

\bibitem{KM1993} M.~Kosuda and J.~Murakami, \textit{Centralizer algebras
    of the mixed tensor representations of quantum group
    $\qGL{m, \oC}$}, Osaka J. Math. 30 (1993) 475--507.

\bibitem{En} Enyang, \textit{Cellular bases of the two-parameter
    version of the centraliser algebra for the mixed tensor
    representations of the quantum general linear group},
  Combinatorial representation theory and related topics (Kyoto, 2002)
  1310 (2003) 134--153.

\bibitem{CDM2007} A.~Cox, M.~De~Visscher, S.~Doty and P.~Martin, 
\textit{On the blocks of the walled Brauer algebra}, J. Algebra 320 (2008), 169-212.

\bibitem{Cox2010} A.~Cox and M.~De~Visscher, 
{\textit{Diagrammatic Kazhdan-Lusztig theory for the (walled) Brauer algebra}}, Journal of Algebra 340(1) (2011).

\bibitem{RuiSong_1}
H. Rui, L. Song
\textit{The representations of quantized walled Brauer algebras},
arXiv:1403.7722 [math.QA] (2014)

\bibitem{RuiSong_2}
H. Rui, L. Song
\textit{Decomposition numbers of quantized walled Brauer algebras},
arXiv:1403.7740 [math.QA] (2014)

\bibitem{ST2015} A. ~M.~ Semikhatov, I. ~Yu.~ Tipunin, 
{\textit{Quantum walled Brauer algebra: commuting families, baxterization, and representations}}, 
arXiv:1512.06994v1 [math.QA].

\bibitem{Stroppel2014}
A. Sartori, C. Stroppel,
\textit{Walled Brauer algebras as idempotent truncations of level $2$ cyclotomic quotients},
arXiv:1411.2771 [math.RT] (2014)

\bibitem{pal1} T.~D.~ Palev and V.~N.~ Tolstoy, {\textit{Finite dimensional irreducible representations of the quantum superalgebra $U_q g\ell(n|1)$}}, Commun. Math. Phys. 141 (1991).

\bibitem{pal2} T.~D.~ Palev, N.~I.~ Stoilova and J.~ Van der Jeugt, {\textit{Finite-dimensional representations of the quantum superalgebra $U_q g\ell(n|m)$ and related q-identities}}, Commun. Math. Phys.
166 (1994).

\bibitem{zhang} R.~ B.~ Zhang, {\textit{Finite dimensional irreducible representations of the quantum supergroup $U_q g\ell(m|n)$}}, J. Math. Phys. 34 (3) (1993).

\bibitem{ky} Nguyen Anh Ky and Nguyen thi Hong Van, {\textit{Finite-dimensional representations of $U_q g\ell(2|1)$ in a basis of $U_q [g\ell(2)\oplus g\ell(1)]$}}, Advances in Natural Sciences 5 (2004), 
p.1.

\bibitem{su} Yucai Su, {\textit{Classification of finite dimensional modules of the Lie superalgebra $s\ell(2|1)$}}, Communications in Algebra 20:11 3259-3277 (1992). 

\bibitem{TR-reps} A.~ M. ~Semikhatov, I. ~Yu.~ Tipunin,
  {\textit{Representations of $\bar{U}_q s\ell(2|1)$ at even roots of
      unity}},   J. Math. Phys. 57 (2), (2016).

\bibitem{Maclane}
S. Maclane, \textit{Homology}, Springer Verlag, 1963.

\bibitem{gotz} Gerhard Gotz, Thomas Quella, Volker Schomerus, {\textit{Representation theory of $s\ell(2|1)$}}, Journal of Algebra 312:2 (2007).

\bibitem{ShaderMoon07}
C.~L.~Shader and D.~Moon,
\textit{Mixed tensor
representations of quantum superalgebra $\qGL{m,n}$}, 
Commun. Algebra, 35:3 (2007), 781-806.

\bibitem{StollWerth2014}
F. Stoll, M. Werth,
\textit{A cell filtration of mixed tensor space},
arXiv:1408.6720 [math.RT] (2014)
  
\bibitem{CW11}
J.~Comes and B.~Wilson, {\textit{Deligne's category \underline{Rep}($GL_\delta$) and representations of
general linear supergroups}}; arXiv:1108.0652.


\end{thebibliography}
\end{document}